\declaretheorem[numberwithin=section]{theorem}
\declaretheorem[numbered=no,name=Theorem]{theorem*}
\declaretheorem[sibling=theorem]{proposition}
\declaretheorem[sibling=theorem]{lemma}
\declaretheorem[sibling=theorem]{conjecture}
\declaretheorem[sibling=theorem]{corollary}
\declaretheorem[style=definition,sibling=theorem]{definition}
\declaretheorem[style=definition,sibling=theorem]{remark}
\declaretheorem[style=definition,sibling=theorem,qed=$\lozenge$]{example}
\numberwithin{equation}{section}
\newcommand\OO{\mathcal O}
\newcommand\CC{\mathcal C}
\newcommand\II{\mathcal I}
\newcommand\HH{\mathcal H}
\newcommand\FF{\mathcal F}
\newcommand\SSc{\mathcal S}
\newcommand\TT{\mathcal T}
\newcommand\R{\mathbb R}
\newcommand\Z{\mathbb Z}
\newcommand\N{\mathbb N}
\newcommand\FI{\mathfrak I}
\newcommand\Fc{\mathfrak c}
\newcommand\Fd{\mathfrak d}
\newcommand\isoto{\xrightarrow{\raisebox{-2pt}[0pt][0pt]{$\mathmakebox[7pt]{\sim}$}}}
\DeclareRobustCommand\Dashv{\Relbar\joinrel\mathrel{|}}
\newcommand*\bcdot{\mathpalette\bcdot@{.5}}
\newcommand*\bcdot@[2]{\mathbin{\vcenter{\hbox{\scalebox{#2}{$\m@th#1\bullet$}}}}}
\DeclareMathOperator\im{im}
\DeclareMathOperator\relint{relint}
\DeclareMathOperator\supp{supp}
\DeclareMathOperator\sig{sig}
\DeclareMathOperator\tc{tc}
\DeclareMathOperator\TC{TC}
\DeclareMathOperator*\argmax{argmax}
\DeclareMathOperator\dg{dg}
\DeclareMathOperator\rk{rk}
\tikzset{posetelm/.style={draw, fill, circle, minimum size=4pt, inner sep=0}}
\tikzset{posetelmm/.style={draw, thick, minimum size=5pt, inner sep=0}}
\tikzset{marking/.style={red}}
\tikzset{elmname/.style={blue}}
\tikzset{covrel/.style={thick}}
\tikzset{netedge/.style={thick,-latex}}
\tikzset{
    dot diameter/.store in=\dot@diameter,
    dot diameter=1.2pt,
    dot spacing/.store in=\dot@spacing,
    dot spacing=6pt,
    dots/.style={
        line width=\dot@diameter,
        line cap=round,
        dash pattern=on 0pt off \dot@spacing,
        shorten <=2pt,
        shorten >=2pt
    }
}
\title[Marked poset polytopes]{A continuous family of marked poset polytopes}
\author[X. Fang, G. Fourier, J.-P. Litza, C. Pegel]{Xin Fang, Ghislain Fourier, Jan-Philipp Litza, Christoph Pegel}
\address{Mathematisches Institut, Universit\"at zu K\"oln, Germany}
\email{xinfang.math@gmail.com}
\address{Institute for Algebra, Number Theory, and Discrete Mathematics, Leibniz University Hannover, Germany}
\email{fourier@math.uni-hannover.de}
\address{Department of Mathematics, University of Bremen, 28359 Bremen, Germany}
\email{jplitza@math.uni-bremen.de}
\address{Institute for Algebra, Number Theory, and Discrete Mathematics, Leibniz University Hannover, Germany}
\email{pegel@math.uni-hannover.de}
\begin{document}
\begin{abstract}
For any marked poset we define a continuous family of polytopes, parametrized by a hypercube, generalizing the notions of marked order and marked chain polytopes. By providing transfer maps, we show that the vertices of the hypercube parametrize an Ehrhart equivalent family of lattice polytopes. The combinatorial type of the polytopes is constant when the parameters vary in the relative interior of each face of the hypercube. Moreover, with the help of a subdivision arising from a tropical hyperplane arrangement associated to the marked poset, we give an explicit description of the vertices of the polytope for generic parameters.
\end{abstract}

\maketitle

\section*{Introduction}

\subsection*{Motivation}
To any finite poset $(P,  \leq)$, two polytopes are associated by Stanley \cite{Sta86}, the order polytope and the chain polytope, both defined in the positive orthant of $\mathbb{R}^{P}$. The defining relations of the order polytope are the following: for $p,q\in P$, $p \leq q$, the coordinates $x_p$ and $x_q$ satisfy $x_p \leq x_q$. 
The cover relations of the poset give the facets of a cone and by restricting the cone to the $|P|$-dimensional unit cube $[0,1]^P$ one obtains the order polytope. The defining inequalities of the chain polytope are given by the maximal chains in $P$, i.e., a maximal chain $p_1 < \ldots < p_s$ in $P$ gives rise to the inequality $x_{p_1} + \ldots + x_{p_s} \leq 1$. 

Stanley showed that the number of lattice points of both polytopes is the same, one is parametrized by filters in $P$ and the other is parametrized by anti-chains in $P$. Further, he introduced a transfer map from the order polytope to the chain polytope, which is a lattice-preserving, piecewise-linear bijection. The existence of the transfer map implies Ehrhart equivalence of the two polytopes, i.e., they both have the same Ehrhart polynomial.

More than 60 years ago, Gelfand and Tsetlin \cite{GT50} introduced monomial bases for finite-dimensional irreducible representations of the algebraic group $\operatorname{GL}_n(\mathbb{C})$. For each such basis, there is a polytope whose lattice points parametrize the basis vectors,  nowadays known as the Gelfand--Tsetlin polytope. In 2005, Ernest Vinberg proposed different polytopes whose lattice points conjecturally parameterize another basis of each irreducible representation of $\operatorname{GL}_n(\mathbb{C})$. This conjecture was proved by Feigin, Littelmann and the second author in 2010 \cite{FFL11}. As observed by Ardila, Bliem and Salazar \cite{ABS11} shortly after, these two families of polytopes are related in a similar way as Stanley's order and chain polytopes. They introduced marked order and marked chain polytopes, i.e., one fixes a subset of poset elements consisting of at least all extremal elements, provides an integeral marking for them and obtains defining inequalities from covering relations between any elements and chains between marked elements for marked order and marked chain polytopes, respectively. They extended Stanley's transfer map to this setting, again implying Ehrhart equivalence of the marked order and marked chain polytopes.

Motivated by the recent work on linear degenerate flag varieties \cite{CFFFR17}, the first two authors introduced marked chain-order polytopes \cite{FF16}, which are mixtures of the two, i.e., for each order ideal of the poset, one imposes chain conditions on the coordinates in the order ideal, and order conditions on the coordinates in its complement. They proved that these marked chain-order polytopes form an Ehrhart equivalent family of lattice polytopes, containing the marked order and marked chain polytopes as extremal cases.

Stanley \cite{Sta86} showed that chain and order polytopes have the same number of vertices; Hibi, Li, Sahara and Shikama \cite{HLSS15} showed that this is also true for the edges of the polytopes. Hibi and Li \cite{HL16} conjectured that for a fixed poset, the $f$-vector of the order polytope is componentwise dominated by the $f$-vector of the chain polytope, and proved this conjecture for the facets. The second author extended the conjecture to marked order and marked chain polytopes. Note that in that case, the number of vertices may differ (depending on the poset). This conjecture is further formulated in \cite{FF16} in the setting of marked chain-order polytopes.

The Hibi-Li conjecture has the following geometric application. There exists a flat degeneration of the full flag variety to the toric variety associated to the Gelfand-Tsetlin polytope \cite{GL96}, and also a flat degeneration to the toric variety associated to its marked chain counterpart \cite{FFL17}. The Hibi-Li conjecture would allow a quantitative comparison of the two toric degenerations, for example a comparison of the number of torus fixed points.

\subsection*{Main results}
In the following we will explain a new approach towards the Hibi-Li conjecture and the construction of a large Ehrhart equivalent family of marked poset polytopes. Indeed, we define a continuous family of polyhedra parametrized by a hypercube. We start with a marked poset $(P, \lambda)$, where $P$ is a poset, $\lambda$ is an order-preserving integral marking of an induced subposet $P^*$ of $P$, and let $\tilde{P} = P \setminus P^*$. We define a marked poset polyhedron $\OO_t(P,\lambda)$ for each $t \in [0,1]^{\tilde{P}}$, such that for $t = \mathbf{0}$ we obtain the marked order polyhedron and for $t = \mathbf{1}$ we obtain the marked chain polyhedron. If $t$ is the characteristic function of an order ideal of $\tilde{P}$, then the corresponding polyhedron is the marked chain-order polyhedron. In this sense, this construction unifies all marked poset polytopes mentioned above. We note here, that if $t$ is in the interior of the hypercube, then the associated polyhedron is not necessarily a rational polyhedron.

The first main result of this paper is concerned with parameters $t\in\{0,1\}^{\tilde P}$.
\begin{theorem*}
If $t$ is a vertex of the hypercube, then $\mathcal{O}_t(P, \lambda)$ is a lattice polyhedron. Moreover if $P^*$ consists of at least all extremal elements of $P$, then $\mathcal{O}_t(P, \lambda)$ is integrally closed and Ehrhart equivalent to the marked order polytope associated to $(P,\lambda)$.
\end{theorem*}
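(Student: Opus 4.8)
The plan is to reduce all three assertions to the marked order polyhedron by means of a lattice-point-preserving, piecewise-linear transfer map, in the spirit of Stanley's transfer map \cite{Sta86} and of its refinement for marked chain-order polytopes \cite{FF16}. A vertex of the hypercube $[0,1]^{\tilde P}$ is precisely the characteristic function $\chi_C$ of a subset $C\subseteq\tilde P$, not necessarily an order ideal. I would first fix the subdivision of the marked order polyhedron $\mathcal{O}_{\mathbf 0}(P,\lambda)$ into the closed full-dimensional cells $R_1,\dots,R_N$ of the hyperplane arrangement consisting of the $\{y_q=y_{q'}\}$ with $q,q'$ a pair of distinct lower covers of a common element of $C$. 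On each $R_i$ the relevant transfer map $\varphi_C$---which subtracts, from each coordinate $y_p$ with $p\in C$, the maximum of the coordinates at the lower covers of $p$, leaves the remaining free coordinates unchanged, and specializes to Stanley's map when $C=\tilde P$ and to the map of \cite{FF16} when $C$ is an order ideal---is affine, and it is given there by an integral matrix of determinant $\pm1$. The substance of the proof is to check that $\varphi_C$ is well defined and continuous across the walls of the subdivision, that its image is exactly $\mathcal{O}_{\chi_C}(P,\lambda)$, and that it admits a piecewise-unimodular inverse, so that $\varphi_C\colon\mathcal{O}_{\mathbf 0}(P,\lambda)\to\mathcal{O}_{\chi_C}(P,\lambda)$ is a bijection restricting to a bijection on lattice points. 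Carrying this out for an arbitrary subset $C$---beyond the order-ideal case handled in \cite{FF16}, and in particular establishing surjectivity and the compatibility of the affine pieces along walls---is the step I expect to be the main obstacle.

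Granting the transfer map, the remaining assertions are essentially formal. Each cell $R_i$ is a marked order polyhedron associated to a refinement of $(P,\lambda)$; being cut out by linear relations of the shapes $y_p-y_q\le 0$, $y_p-y_q=0$ and $y_a=\lambda(a)$, whose coefficient matrix is totally unimodular, it is a lattice polyhedron that is moreover integrally closed. Since $\varphi_C$ restricts on each $R_i$ to a unimodular affine isomorphism onto $\varphi_C(R_i)$, every $\varphi_C(R_i)$ is again an integrally closed lattice polyhedron, and together the $\varphi_C(R_i)$ cover $\mathcal{O}_{\chi_C}(P,\lambda)$. A vertex of a polyhedron covered by finitely many lattice subpolyhedra is a vertex of one of the pieces, hence a lattice point; combined with the observation that at a vertex of the hypercube $\mathcal{O}_{\chi_C}(P,\lambda)$ is cut out by a system with integer data (so that it is in any case a rational polyhedron), this proves the first assertion. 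If in addition $P^*$ contains all extremal elements of $P$, then $\mathcal{O}_{\chi_C}(P,\lambda)$ and all the $\varphi_C(R_i)$ are bounded; and a polytope that is covered by integrally closed lattice polytopes is itself integrally closed, since any lattice point of its $k$-th dilate lies in the $k$-th dilate of one of the covering pieces and hence is a sum of $k$ lattice points of that piece. This gives the integral-closure assertion.

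Finally, Ehrhart equivalence with the marked order polytope $\mathcal{O}_{\mathbf 0}(P,\lambda)$ follows by the usual dilation argument: for every positive integer $k$ one has $k\,\mathcal{O}_t(P,\lambda)=\mathcal{O}_t(P,k\lambda)$ for all $t$, so the transfer map built from the marking $k\lambda$ is a lattice-point bijection between $k\,\mathcal{O}_{\mathbf 0}(P,\lambda)$ and $k\,\mathcal{O}_{\chi_C}(P,\lambda)$; hence the two polytopes have the same Ehrhart polynomial. Since every $\mathcal{O}_{\chi_C}(P,\lambda)$ with $t=\chi_C$ a vertex of the hypercube is transfer-equivalent to the single polytope $\mathcal{O}_{\mathbf 0}(P,\lambda)$, the whole family parametrized by the vertices of the hypercube is Ehrhart equivalent.
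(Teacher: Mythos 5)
Your plan is sound and the structure (transfer map $+$ polyhedral subdivision $+$ piecewise-unimodularity) matches the paper's, but you diverge from it in two places that are worth flagging.

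First, the subdivision. You subdivide $\OO(P,\lambda)$ by the classical arrangement of hyperplanes $\{y_q=y_{q'}\}$ for pairs of lower covers of elements of $C$; these cells are the linearity regions of $\varphi_{\chi_C}$ (or a refinement thereof), i.e., essentially the paper's \emph{tropical} subdivision from \Cref{sec:tropical}. The paper, by contrast, uses the strictly finer subdivision of \Cref{sec:mop-JS-subdivision} into products of simplices and simplicial cones (following \cite{JS14}), with cells $F_\II$ indexed by $\lambda$-compatible chains of order ideals. Both subdivisions have the property that the transfer map is affine-unimodular on each cell, so for the lattice-polyhedron claim either works: the vertices of $\OO_t(P,\lambda)$ must be vertices in the transferred subdivision, and these are images of lattice points.

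Second, integral closure of the cells. Here the two subdivisions buy you different arguments. The paper's cells $F_\II$ are, up to a lattice-preserving affine map, products of marked order polytopes of chains, hence products of integral dilations of unimodular simplices, which are integrally closed by elementary means. Your cells are cut out by a system of the form $y_p-y_q\le 0$, $y_p-y_q=0$, $y_a=\lambda(a)$ with totally unimodular coefficient matrix and integer right-hand side, and you assert that this alone implies integral closure. That implication is \emph{true} but genuinely nontrivial: it is a Baum--Trotter-type theorem (or can be proved by the inductive trick of finding an integer point of $\{x: Ax\le b,\ A(z-x)\le (k-1)b\}$, whose constraint matrix $\bigl(\begin{smallmatrix}A\\-A\end{smallmatrix}\bigr)$ is still TU). As written, your sentence ``whose coefficient matrix is totally unimodular, it is a lattice polyhedron that is moreover integrally closed'' compresses a theorem into a clause; it needs either a citation or the short inductive proof. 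Also, the phrase ``marked order polyhedron associated to a refinement of $(P,\lambda)$'' is slightly off: imposing $y_q=y_{q'}$ corresponds to a \emph{quotient} poset, not a refinement, though this does not affect the argument.

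Beyond these two points, you correctly identify the transfer map (the paper's \Cref{thm:univ-transfer}) as the substantive input, and your Ehrhart-equivalence argument via $k\,\OO_t(P,\lambda)=\OO_t(P,k\lambda)$ is equivalent to the paper's one-line observation that $\varphi_t$ is a piecewise-unimodular (hence dilation-compatible, lattice-preserving) bijection when $t\in\{0,1\}^{\tilde P}$. In short: correct outline, genuinely different route through the integral-closure step, with the TU$\Rightarrow$IDP implication needing to be made explicit.
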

This construction gives an Ehrhart equivalent family of lattice polytopes which is parametrized by vertices of the hypercube. Moving the parameter away from vertices of the parametrizing hypercube, the combinatorial type of the polytope varies.
However, we have the following result:
\begin{theorem*}
The combinatorial type of $\mathcal{O}_t(P, \lambda)$ is constant along relative interiors of faces of the parametrizing hypercube.
\end{theorem*}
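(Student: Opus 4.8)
The plan is to fix a face $F$ of the parametrizing hypercube $[0,1]^{\tilde P}$ and to show that the combinatorial type of $\OO_t(P,\lambda)$ is constant on its relative interior. Such an $F$ is obtained by fixing $t_p=u_p\in\{0,1\}$ for $p$ outside some $S\subseteq\tilde P$ (for some $u\in\{0,1\}^{\tilde P\setminus S}$), so that $\relint F=\{t:t_p=u_p\text{ for }p\notin S,\ t_p\in(0,1)\text{ for }p\in S\}$ is connected; it therefore suffices to prove that the combinatorial type is locally constant on $\relint F$.

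I would start from the explicit description of $\OO_t(P,\lambda)$ obtained above: there is a finite index set $\II$ and a right-hand side $b=(b_I)_{I\in\II}$, both independent of $t$ and with the marking $\lambda$ entering only through $b$, such that $\OO_t(P,\lambda)=\{x:\langle a_I(t),x\rangle\le b_I\ \text{for all }I\in\II\}$, where every entry of each $a_I(t)$ is, up to sign, a product of factors drawn from $\{t_p\}_{p\in\tilde P}\cup\{1-t_p\}_{p\in\tilde P}$; moreover $\OO_t(P,\lambda)$ is a polyhedron whose dimension does not depend on $t$. I would then use the elementary fact that, for a fixed right-hand side and fixed dimension, the combinatorial type of $\{x:Ax\le b\}$ is locally constant in $A$ off the zero locus of finitely many polynomials in the entries of $A$ --- the maximal minors $\det A_B$ that select the candidate vertices $A_B^{-1}b_B$, together with the numerators of the slacks $b_I-\langle a_I,A_B^{-1}b_B\rangle$ that record facet containments. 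Pulling these back along $t\mapsto a(t)$ and restricting to the affine span of $F$ gives finitely many polynomials in $t$, and the theorem reduces to the claim that none of them vanishes on $\relint F$ without vanishing identically on $\mathrm{aff}(F)$ (one that vanishes identically records a degeneracy uniform over $F$, hence does not affect the type there).

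The heart of the matter is this claim, and it is where the marked poset enters. Reading the active constraints at a point of $\OO_t(P,\lambda)$ along a suitable linear extension of $\tilde P$, the resulting subsystem $A_B(t)$ should become triangular after a permutation of rows and columns, with diagonal entries that are, up to sign, products of the $t_p$ and $1-t_p$ --- a $t$-weighted version of the network and chain matrices governing the marked order and marked chain polytopes at $t=\mathbf 0$ and $t=\mathbf 1$. From such a form $\det A_B(t)$ is, up to sign, a single monomial in the $t_p$ and $1-t_p$, and --- using the combinatorial description of the vertices (encoding filters, respectively antichains, of $P$) --- the slack numerators are such monomials times constants built from $\lambda$. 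Restricted to $\mathrm{aff}(F)$, i.e.\ after substituting $t_p=u_p$ for $p\notin S$, such an expression is either identically zero or nowhere zero on $\relint F$, since $0<t_p<1$ there; a factor $t_p$ signals a covering relation below $p$ being switched off and a factor $1-t_p$ signals the chain- and order-type conditions at $p$ merging. Geometrically, the events that could change the combinatorial type --- a facet becoming redundant, two vertices colliding, a defining inequality degenerating --- all occur exactly when some coordinate $t_p$ reaches $0$ or $1$, that is, when $t$ passes to a lower-dimensional face of the hypercube.

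The step I expect to be the main obstacle is precisely this triangularization and the monomiality of the slacks. At $t=\mathbf 0$ the active subsystems are the classical totally unimodular network matrices of the order polytope, triangularizable along any linear extension; switching on the weights $t_p$, one must verify that the same orderings still triangularize, that no weight is forced to zero by a structural dependence among the active facets, and that the cancellations making the slacks monomial persist for all $t$. I would attempt this by induction along a linear extension, removing a maximal element of $\tilde P$ and tracking how the single $\max$ it introduces over its covered elements interacts with the active constraints; equivalently, one shows that the combinatorial recipe for the face lattice of $\OO_t(P,\lambda)$ depends on $t$ only through the supports $\{p:t_p\ne0\}$ and $\{p:t_p\ne1\}$, hence only through $F$. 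Granting this, constancy of the combinatorial type along $\relint F$ follows, and the same analysis reveals how the type degenerates as $t$ crosses to a lower face of the hypercube.
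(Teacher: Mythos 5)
Your approach is genuinely different from the paper's. The paper pulls the question back through the transfer map $\varphi_t$ to the fixed polyhedron $\OO(P,\lambda)$: \Cref{prop:ineq-pullback} characterizes, for each $x\in\OO(P,\lambda)$ and each defining inequality, exactly when that inequality is tight at $\varphi_t(x)$, and the characterization depends on $t$ only through which coordinates $t_p$ equal $0$, equal $1$, or lie in $(0,1)$. Since $\varphi_t$ is a bijection onto $\OO_t(P,\lambda)$ and the face lattice is read off from the realized incidence sets, the combinatorial type depends only on the open face of the hypercube containing $t$, with no need to control minors, slacks, or dimensions. You instead work directly with the inequality system $A(t)x\le b$, tracking maximal minors $\det A_B(t)$ and cleared-denominator slacks as polynomials in $t$, and aim to show that restricted to $\aff(F)$ each is identically zero or nowhere-vanishing on $\relint F$.

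The gap is the step you yourself flag as ``the main obstacle'': that these minors and slack numerators are, up to sign, single monomials in $\{t_p,\,1-t_p\}_{p\in\tilde P}$. This is plausible but genuinely nontrivial, and nothing in your sketch establishes it. The rows of $A(t)$ are indexed by saturated chains, not by covering relations, so the Laplace expansion of $\det A_B(t)$ is an alternating sum of many chain-monomials with no a priori reason to collapse to one term; if it produced a factor such as $t_p-t_q$, or---once the marking constants are folded into a slack---something like $\lambda(a)\,t_p-\lambda(b)$, the sign could change inside $\relint F$ and your argument would fail. Your proposed remedy (triangularize along a linear extension, peel off a maximal unmarked element, track how its $\max$ interacts with the active constraints) is a reasonable outline, but it is exactly the combinatorial content that the paper packages cleanly into the relation $\dashv_x$ of \Cref{def:maximizing-rel} and the two-case analysis of \Cref{prop:ineq-pullback}, and you do not carry it out. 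Your own closing phrase---that ``the combinatorial recipe for the face lattice depends on $t$ only through the supports''---is essentially a restatement of the theorem, so what you present is the right structural feature to exploit, but not yet a proof.
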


When the parameter varies from the interior of a face in the hypercube to its boundary, we introduce the notion of continuous degenerations from one polyhedron $Q_0$ to another polyhedron $Q_1$ (\Cref{def:cont-deform}), in order to compare the face lattices of the corresponding polyhedra. As a corollary, we deduce that the $f$-vector of $Q_1$ is componentwise dominated by the $f$-vector of $Q_0$. We then apply the construction to marked poset polyhedra: the polyhedra parametrized by the relative interior of a face of the hypercube degenerate to the polyhedra parametrized by the boundary of the given face.

We turn back to the Hibi-Li conjecture. The face structure and especially the vertices of the marked order polytopes are described in \cite{JS14,Peg17}. On the other side, for marked chain polytopes, neither the face structure nor the vertices are known so far. Even in the case of the Gelfand--Tsetlin poset, the number of vertices is not known (\cite{FM17}). It turns out that, by changing the parameter $t$ to a generic one, the vertices can be located by using a subdivision arising from a tropical hyperplane arrangement associated to the marked poset.

\begin{theorem*}
The vertices of a generic marked poset polyhedron $\mathcal{O}_t(P, \lambda)$ with $t \in (0,1)^{\tilde{P}}$ are exactly the vertices in its tropical subdivision.
\end{theorem*}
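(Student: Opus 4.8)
The plan is to split the claimed equality into its two inclusions, the first of which is formal and the second of which carries the content. First recall that the tropical hyperplane arrangement $\mathcal H=\mathcal H(P,\lambda)$ associated to the marked poset cuts the ambient space, and hence $\OO_t(P,\lambda)$, into finitely many closed polyhedral cells $C_\sigma$, indexed by the combinatorial data $\sigma$ that records, for each element of $\tilde P$, which cover relation attains the maximum occurring in the corresponding defining relation of $\OO_t(P,\lambda)$. On each $C_\sigma$ all such maxima are resolved to single linear terms, so $\OO_t^\sigma:=\OO_t(P,\lambda)\cap C_\sigma$ is an ordinary polytope: it is cut out by the resolved linear relations together with the linear facet- and wall-inequalities of $C_\sigma$. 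These cells and their faces form the tropical subdivision of $\OO_t(P,\lambda)$, and a vertex in this subdivision is a point $v\in\OO_t(P,\lambda)$ that is a vertex of $\OO_t^\sigma$ for every $\sigma$ with $v\in C_\sigma$. The first inclusion is then immediate: a vertex $v$ of $\OO_t(P,\lambda)$ is an extreme point of each sub-polytope $\OO_t^\sigma\subseteq\OO_t(P,\lambda)$ containing it, hence a vertex in the tropical subdivision.

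For the converse I would work locally, at the level of tangent cones. Fix a vertex $v$ of the tropical subdivision. Because $\OO_t(P,\lambda)$ is convex and equals $\bigcup_\sigma\OO_t^\sigma$, near $v$ it coincides with the convex cone $T_v\OO_t(P,\lambda)=\bigcup_\sigma T_v\OO_t^\sigma$, the union being over the cells $\sigma$ incident to $v$; by hypothesis each $T_v\OO_t^\sigma$ is pointed, and $v$ is a vertex of $\OO_t(P,\lambda)$ precisely when the whole union is pointed. The point is that this is not automatic---a convex union of pointed cones can fail to be pointed---and one must exploit how the cones $T_v\OO_t^\sigma$ fit together inside the complete fan $\mathcal F_v$ of cells incident to $v$. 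Crossing one wall of $\mathcal F_v$ changes $\sigma$ in a single coordinate (which cover relation is the argmax of one defining relation), and thereby exchanges one resolved facet-inequality of $\OO_t^\sigma$ for another one having the same zero locus along that wall. Chaining such wall-crossings from one incident cell to any other and using the genericity of $t\in(0,1)^{\tilde P}$ to exclude unforced coincidences of walls and facets at $v$, one shows that a line through $v$ inside $\OO_t(P,\lambda)$ would, on one side of some wall it meets, violate the corresponding resolved inequality of $\OO_t^\sigma$---contradicting $v\in\OO_t^\sigma$, or else contradicting that $v$ is a vertex of that cell. Hence $T_v\OO_t(P,\lambda)$ is pointed and $v$ is a vertex of $\OO_t(P,\lambda)$. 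A variant, perhaps cleaner to carry out, instead builds for each tropical-subdivision vertex $v$ an explicit linear functional that is uniquely minimised over $\OO_t(P,\lambda)$ at $v$: one reads it off from the argmax data of a top cell whose closure contains $v$ and perturbs it using the generic entries of $t$ so that the minimising face collapses to $\{v\}$.

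The main obstacle is exactly this gluing step: controlling $T_v\OO_t(P,\lambda)$ at a point on a wall of $\mathcal H$, i.e.\ ruling out that individually pointed local cones recombine into one with a lineality direction. This is where the hypothesis $t\in(0,1)^{\tilde P}$ is indispensable---for boundary or merely rational parameters the tropical hyperplanes need not be transverse to the faces of $\OO_t(P,\lambda)$, and spurious non-vertex $0$-cells or flat directions genuinely appear---so the argument has to make ``general position of $\mathcal H$ relative to $\OO_t(P,\lambda)$'' quantitative, presumably by exhibiting a dense semialgebraic set of parameters on which the combinatorics of both $\OO_t(P,\lambda)$ and its tropical subdivision are simultaneously locally constant, consistent with the constancy of combinatorial type along the open cube established above. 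Once this transversality is in place, the wall-by-wall bookkeeping and the final pointedness conclusion should be routine.
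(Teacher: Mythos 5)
Your first inclusion (every vertex of $\OO_t(P,\lambda)$ is a vertex of the tropical subdivision) is correct and immediate. For the converse you outline a genuinely different strategy from the paper: rather than working locally with tangent cones and wall crossings in $\OO_t(P,\lambda)$ itself, the paper pulls the minimal face $Q$ of $\OO_u(P,\lambda)$ containing $\varphi_u(v)$ back through the inverse transfer map $\psi_u$, uses the equality characterization in \Cref{prop:ineq-pullback}, and for each maximizing relation $q\dashv_v r$ explicitly constructs a saturated chain whose corresponding inequality is tight at $\varphi_u(v)$; this forces $\psi_u(Q)$ into the affine subspace $G'$ of \Cref{lem:vertex} and hence $Q=\{\varphi_u(v)\}$. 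This is done first for an intermediate parameter $u$ with $u_p\in(0,1)$ exactly on $R_G$ and then transported to every $t\in(0,1)^{\tilde P}$ by the continuous-degeneration machinery of \Cref{sec:cont-deg}.

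The gap in your proposal is that the step you label ``the main obstacle'' is precisely the content of the theorem, and you only assert a plan for it. Pointedness after gluing is not automatic: a convex union of two pointed cones sharing a wall can be a half-space (the first and second quadrants in $\R^2$ glue along the positive $y$-axis to the closed upper half-plane, which contains the whole $x$-axis), so some structural input specific to $(P,\lambda)$ is needed to exclude this, and the ``wall-by-wall bookkeeping'' you defer as routine is exactly where that input lives. To carry it out you would in effect have to rediscover the paper's chain certificate, which in turn relies on the face partition $\pi_F$ and the auxiliary relation $\Dashv_v$ and is not a formality. Moreover, the genericity mechanism you invoke --- transversality of $\HH(P,\lambda)$ to the faces of $\OO_t(P,\lambda)$, a dense semialgebraic set of parameters --- is not what the hypothesis $t\in(0,1)^{\tilde P}$ provides. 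The combinatorial type is already constant on the whole open cube by \Cref{cor:combtype}, and no transversality of the tropical hyperplanes to the faces of $\OO(P,\lambda)$ is needed or asserted; the only role of interiority is to make every coefficient $t_p$ and $1-t_p$ in \eqref{eq:chain} strictly positive, which is what allows \Cref{prop:ineq-pullback} to convert tightness of inequalities into the chain of relations $\dashv_x$ that the construction exploits.
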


We close with a first approach to the Hibi-Li conjecture for ranked posets. In this case, we can actually prove the conjecture for the facets of marked chain-order polytopes. Posets arising from representation theory are usually ranked, so we can cover all these cases.

The paper is organized as follows: In \Cref{sec:mpp} we define the universal family of marked poset polyhedra, recovering all the previously mentioned marked poset polytopes. In \Cref{sec:propmpp} we show that the polytopes in the family are images of the marked order polyhedron under a parametrized transfer map, using the transfer map we discuss some properties of this family. We introduce and study continuous degenerations of polyhedra and apply them to our setup in \Cref{sec:cont-deg}. We recall tropical hyperplane arrangements in \Cref{sec:tropical}, which is applied in \Cref{sec:generic-vertices} to study the vertices of the generic marked poset polyhedron. We consider the contraction to regular marked posets in \Cref{sec:poset-transforms} and study the facets in the situation of ranked posets in \Cref{sec:facetHL}.

\section{Marked poset polyhedra}\label{sec:mpp}
\subsection{Notations}
For a polyhedron $Q$ we denote by $\mathcal{F}(Q)$ the face lattice of $Q$ \cite{Zie95}. 

A \emph{partially ordered set} \((P, {\le})\) is a set \(P\) together with a reflexive, transitive and anti-symmetric relation \({\le}\).
We use the usual short term \emph{poset} and omit the relation \({\le}\) in notation when the considered partial order is clear from the context.
A finite poset is determined by its \emph{covering relations}:
we say \emph{\(p\) is covered by \(q\)} and write \(p\prec q\), if \(p<q\) and whenever \(p\le r\le q\) it follows that \(r=p\) or \(r=q\).

\subsection{Definitions}

We start with recalling the notion of a marked poset.

\begin{definition}\label{def:abs-mpp}
    Let \(P\) be a finite poset and \(\lambda\colon P^*\to\R\) be a real valued order-preserving map on an induced subposet \(P^*\subseteq P\).
    We say \((P,\lambda)\) is a \emph{marked poset} with \emph{marking} \(\lambda\), \emph{marked elements} \(P^*\) and denote by \({\tilde P} = P\setminus P^*\) the set of all \emph{unmarked elements}.
\end{definition}

We introduce the main object to be studied in this paper.

\begin{definition} \label{def:mpp}
    Let \((P,\lambda)\) be a marked poset such that \(P^*\) contains at least all minimal elements of \(P\).
    For \(\smash{t\in[0,1]^{\tilde P}}\) we define the \emph{marked poset polyhedron} \(\OO_t(P,\lambda)\) as the set of all \(x\in\R^P\) satisfying the following conditions:
    \begin{enumerate}
        \item for each \(a\in P^*\) an equation
                \(x_a = \lambda(a)\),
        \item for each saturated chain \(p_0\prec p_1 \prec p_2 \prec \cdots \prec p_r \prec p\) with \(p_0\in P^*\), \(p_i\in \tilde P\) for \(i\ge 1\), \(p\in P\) and \(r\ge 0\) an inequality
            \begin{equation} \label{eq:chain}
                (1-t_p) \left( 
                t_{p_1}\cdots t_{p_r} x_{p_0} + t_{p_2}\cdots t_{p_r} x_{p_1} + \cdots + t_{p_{r}}x_{p_{r-1}}+x_{p_{r}}
                \right) \le x_p,
            \end{equation}
            where \(t_p=0\) if \(p\in P^*\). Note that the inequalities for \(p\in P^*\) and \(r=0\) may be omitted, since they are consequences of \(\lambda\) being order-preserving.
    \end{enumerate}
    Since the coordinates in \(P^*\) are fixed, we sometimes consider the projection of \(\OO_t(P,\lambda)\) in \(\smash{\R^{\tilde P}}\) instead and we keep the same notation for the projection.
\end{definition}

For the rest of the paper we assume \((P,\lambda)\) to have at least all minimal elements marked, so that \Cref{def:mpp} always applies.

When not just the minimal but in fact all extremal elements of \(P\) are marked, the polyhedra \(\OO_t(P,\lambda)\) will all be bounded and hence referred to as \emph{marked poset polytopes}.
In this paper, whenever a terminology using the word ``polyhedron'' is introduced, the same term with ``polyhedron'' replaced by ``polytope'' is always implicitly defined for the case of all extremal elements of \((P,\lambda)\) being marked.

We will refer to the family of all \(\OO_t(P,\lambda)\) for \(\smash{t\in[0,1]^{\tilde P}}\) as the \emph{universal family of marked poset polyhedra} associated to the marked poset \((P,\lambda)\).
When at least one parameter \(t_p\) is in \((0,1)\), we call \(\OO_t(P,\lambda)\) an \emph{intermediate marked poset polyhedron} and when all \(t_p\) are in \((0,1)\) a \emph{generic marked poset polyhedron}.

\subsection{Examples: Marked Chain-Order Polyhedra}
\label{sec:mcop}

Marked poset polyhedra generalize the notion of order polytopes and chain polytopes \cite{Sta86}, marked order polytopes and marked chain polytopes \cite{ABS11}, as well as the marked chain-order polytopes in the sense of \cite{FF16}. We explain in this subsection how to recover these polytopes by specializing the parameter $t$. 

Consider the marked poset polyhedra $\OO_t(P,\lambda)$ for $t$ being a vertex of the hypercube $[0,1]^{\tilde P}$, i.e., $t\in\{0,1\}^{\tilde P}$.
Each such \(t\) uniquely corresponds to a partition \(\tilde P = C \sqcup O\) such that \(t\) is the characteristic function \(\chi_C\), i.e.,
\begin{equation*}
    t_p = \chi_C(p) =
    \begin{cases}
        1 & \text{for \(p\in C\),} \\
        0 & \text{for \(p\in O\).}
    \end{cases}
\end{equation*}
In this case, we denote the marked poset polyhedron \(\OO_t(P,\lambda)\) by \(\OO_{C,O}(P,\lambda)\) and refer to it as a \emph{marked chain-order polyhedron}.
The elements of \(C\) will be called \emph{chain elements} and the elements of \(O\) \emph{order elements}. We obtain the following description:

\begin{proposition} \label{prop:mcop}
    Given any partition \(\tilde P = C\sqcup O\), the marked chain-order polyhedron \(\OO_{C,O}(P,\lambda)\) is given by the following linear equations and inequalities:
    \begin{enumerate}
        \item for each \(a\in P^*\) an equation \(x_a=\lambda(a)\),
        \item for each chain element \(p\in C\) an inequality \(0\le x_p\),
        \item for each saturated chain \(a\prec p_1 \prec p_2 \cdots \prec p_r \prec b\) between elements \(a,b\in P^*\sqcup O\) with all \(p_i\in C\) and \(r\ge 0\) an inequality
            \begin{equation*}
                x_{p_1} + \cdots + x_{p_r} \le x_b - x_a.
            \end{equation*}
            As before, the case \(a,b\in P^*\) and \(r=0\) can be omitted. \qedhere
    \end{enumerate}
\end{proposition}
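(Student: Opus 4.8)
The plan is to substitute $t=\chi_C$ directly into \Cref{def:mpp} and check that the resulting system of (in)equalities reduces, term by term, to the three families listed in the proposition. Both polyhedra lie in the affine subspace $\{x\in\R^P : x_a=\lambda(a)\text{ for all }a\in P^*\}$, so item (1) is immediate and it remains to compare the inequalities of \Cref{def:mpp}(2) with items (2) and (3).

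The key elementary observation is that once every $t_p$ is $0$ or $1$, the coefficient $t_{p_{j+1}}\cdots t_{p_r}$ of $x_{p_j}$ in \eqref{eq:chain} equals $1$ exactly when $p_{j+1},\dots,p_r$ all lie in $C$, and equals $0$ otherwise. With this in hand I would split on the top element $p$ of a saturated chain $p_0\prec p_1\prec\cdots\prec p_r\prec p$ as in \Cref{def:mpp}(2). If $p\in C$, then $1-t_p=0$ and \eqref{eq:chain} collapses to $0\le x_p$; since every element of $C$ is unmarked, hence non-minimal, it sits atop some such chain, so these inequalities are precisely item (2). If $p\in P^*\sqcup O$, then $t_p=0$, and I would introduce the largest index $m$ with $p_m\notin C$, which exists because $p_0\in P^*$ is never a chain element. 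For $j\ge m$ the coefficient of $x_{p_j}$ equals $1$, while for $j<m$ it vanishes because it contains the factor $t_{p_m}=0$; hence \eqref{eq:chain} becomes $x_{p_m}+x_{p_{m+1}}+\cdots+x_{p_r}\le x_p$. The truncated chain $p_m\prec p_{m+1}\prec\cdots\prec p_r\prec p$ has both endpoints in $P^*\sqcup O$ and all interior elements in $C$, so this is exactly the inequality of item (3) for that chain, with $a=p_m$ and $b=p$.

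For the reverse inclusion I would check that every inequality in items (2) and (3) actually arises. Item (2) is covered above. Given a chain $a\prec q_1\prec\cdots\prec q_s\prec b$ as in item (3): if $a\in P^*$, it already has the shape required in \Cref{def:mpp}(2) and the computation above with $m=0$ produces it; if $a\in O$, then $a$ is unmarked, hence non-minimal, so I prepend any saturated chain from a marked element up to $a$ and apply \eqref{eq:chain} to the concatenation. Now the truncation index $m$ lands exactly at the position of $a$ (since $a\notin C$ but $q_1,\dots,q_s\in C$), so the prepended part drops out and we recover $x_{q_1}+\cdots+x_{q_s}\le x_b-x_a$. A final pass over the degenerate cases ($r=0$, $m=r$, or $a,b\in P^*$) confirms that the clauses marked ``may be omitted'' on the two sides match up.

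I do not expect a genuine obstacle here — this is essentially bookkeeping — but the step deserving the most care is the coefficient computation together with the choice of $m$: one has to see that specializing the $t_{p_i}$ turns the weighted sum in \eqref{eq:chain} into a block of unit coefficients (on $p_m,\dots,p_r$) preceded by zeros, and one must be careful not to silently drop any of the boundary cases when matching the two descriptions.
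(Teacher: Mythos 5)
Your proof is correct and follows essentially the same route as the paper's: specialize $t=\chi_C$ in \eqref{eq:chain}, split on whether the top element lies in $C$ or in $P^*\sqcup O$, identify the truncation index (your $m$ is the paper's $k$), and for the converse prepend a saturated chain from a marked element when $a\in O$. The coefficient observation you flag as needing care is precisely the paper's simplification step, and your handling of it is accurate.
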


\begin{proof}
    Let \(t=\chi_C\in\smash{\{0,1\}^{\tilde P}}\) and consider a chain \(p_0\prec p_1\prec\cdots\prec p_r\prec p\) with \(p_0\in P^*\), \(p_i\in\tilde P\) for \(i\ge 1\), \(p\in P\) and \(r\ge 0\).
    This chain yields an inequality
    \begin{equation} \label{eq:chain-orig}
        (1-t_p) \left( 
        t_{p_1}\cdots t_{p_r} x_{p_0} + t_{p_2}\cdots t_{p_r} x_{p_1} + \cdots + x_{p_{r}}
        \right) \le x_p,
    \end{equation}
    where \(t_p=0\) if \(p\in P^*\).
    
    When \(p\in C\) we have \(t_p=1\) and \eqref{eq:chain-orig} becomes \(0\le x_p\).
    Since all minimal elements are marked, there is such a chain ending in \(p\) for each \(p\in C\) and hence we get \(0\le x_p\) for all \(p\in C\) this way.

    When \(p\in P^*\sqcup O\), we have \(t_p=0\) and \eqref{eq:chain-orig} reads
    \begin{equation*}
        t_{p_1}\cdots t_{p_r} x_{p_0} + t_{p_2}\cdots t_{p_r} x_{p_1} + \cdots + x_{p_{r}} \le x_p.
    \end{equation*}
    Since \(t_{p_i}=\chi_C(p_i)\), letting \(k\ge 0\) be maximal such that \(p_k\in P^*\sqcup O\), we obtain
    \begin{equation*}
       x_{p_k} + x_{p_{k+1}} + \cdots + x_{p_{r}} \le x_p,
    \end{equation*}
    which is equivalent to
    \begin{equation*}
        x_{p_{k+1}} + \cdots + x_{p_{r}} \le x_p - x_{p_k}.
    \end{equation*}

    Conversely, consider any chain \(a\prec p_1 \prec p_2 \cdots \prec p_r \prec b\) between elements \(a,b\in P^*\sqcup O\) with all \(p_i\in C\).
    If \(a\in P^*\), the chain is of the type to give a defining inequality as in \Cref{def:mpp} and we immediately get
    \begin{equation*}
        x_{p_1} + \cdots + x_{p_r} \le x_b - x_a.
    \end{equation*}
    If \(a\in O\), extend the chain downward to a marked element to obtain a chain
    \begin{equation*}
        q_0 \prec q_1 \prec \cdots \prec q_l \prec a \prec p_1 \prec \cdots \prec p_r \prec n.
    \end{equation*}
    Since \(a\) is the last element in the chain contained in \(P^*\sqcup O\), the above simplification for the inequality given by this chain yields
    \begin{equation*}
        x_{p_1} + \cdots + x_{p_r} \le x_b - x_a. \qedhere
    \end{equation*}
\end{proof}

\begin{remark}
    The term ``marked chain-order polytope'' is used differently in \cite{FF16}, where the definition only allows partitions \(\tilde P = C\sqcup O\) such that there is no pair \(p\in O\), \(q\in C\) with \(p<q\), i.e., \(C\) is an order ideal in \(\tilde P\).
    We call such a partition an \emph{admissible partition} and refer to \(\OO_{C,O}(P,\lambda)\) as an \emph{admissible marked chain-order polyhedron (polytope)}.
    In this paper, we allow arbitrary partitions for marked chain-order polyhedra instead of referring to this more general construction as ``layered marked chain-order polyhedra'' as suggested in \cite{FF16}.

When all \(t_p=0\) (resp.\ all \(t_p=1\)) for \(p\in\tilde{P}\) and \(P^*\) contains all extremal elements of \(P\), the marked poset polyhedron \(\OO_t(P,\lambda)\) coincides with the marked order polytope \(\OO(P,\lambda)\) (resp.\ the marked chain polytope \(\CC(P,\lambda)\)) introduced in \cite{ABS11}. The order polytopes (resp.\ chain polytopes) defined in \cite{Sta86} are special marked order polytopes (resp.\ marked chain polytopes), see \cite{ABS11}.
\end{remark}

\section{Properties of marked poset polyhedra}\label{sec:propmpp}

\subsection{Transfer Maps}

We will continue by proving that the polyhedra defined in \Cref{def:mpp} are in fact images of the marked order polyhedron under a parametrized transfer map.

\begin{theorem} \label{thm:univ-transfer}
    For \(t\in [0,1]^{\tilde{P}}\), the maps \(\varphi_t,\psi_t\colon \R^P\to\R^P\) defined by
    \begin{align*}
        \varphi_t(x)_p &=
        \begin{cases}
            x_p &\text{if \(p\in P^*\),} \\
            x_p - t_p \max_{q\prec p} x_q &\text{otherwise},
        \end{cases} \\
        \psi_t(y)_p &=
        \begin{cases}
            y_p &\text{if \(p\in P^*\),} \\
            y_p + t_p \max_{q\prec p} \psi_t(y)_q &\text{otherwise},
        \end{cases}
    \end{align*}
    are mutually inverse.
    Furthermore, \(\varphi_t\) restricts to a piecewise-linear bijection from \(\OO(P,\lambda)\) to \(\OO_t(P,\lambda)\).
\end{theorem}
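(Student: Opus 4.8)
The plan is to prove the statement in three steps: that $\varphi_t$ and $\psi_t$ are well-defined and mutually inverse, that $\varphi_t$ maps $\OO(P,\lambda)$ into $\OO_t(P,\lambda)$ while $\psi_t$ maps $\OO_t(P,\lambda)$ into $\OO(P,\lambda)$, and finally to observe that these facts together with mutual inverseness yield the claimed bijection. Piecewise-linearity will be automatic, since both maps are assembled from coordinate projections, affine maps and pointwise maxima.

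For the first step I would fix a linear extension of $P$. Both maps fix the $P^*$-coordinates, and for $p\in\tilde P$ each defines the $p$-th output coordinate from data indexed by elements strictly below $p$ --- for $\psi_t$, from the already-computed output values $\psi_t(y)_q$, $q\prec p$. As every element of $\tilde P$ is non-minimal in $P$ (all minimal elements being marked), the set $\{q:q\prec p\}$ is non-empty and the recursions are well-defined along the linear extension. The identity $\psi_t\circ\varphi_t=\id$ is then a one-line induction: with $y=\varphi_t(x)$ and $p\in\tilde P$, $\psi_t(y)_p=y_p+t_p\max_{q\prec p}\psi_t(y)_q=y_p+t_p\max_{q\prec p}x_q=x_p$ by the inductive hypothesis, and $\varphi_t\circ\psi_t=\id$ unfolds directly from the definitions.

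The substance is in the two inclusions, and this is where the hypothesis $t\in[0,1]^{\tilde P}$ is used in an essential way. Recall (e.g.\ from \Cref{prop:mcop} with $C=\varnothing$) that $\OO(P,\lambda)$ is cut out by $x_a=\lambda(a)$ together with the cover inequalities $x_q\le x_p$ for $q\prec p$. For $\varphi_t(\OO(P,\lambda))\subseteq\OO_t(P,\lambda)$ I would take $x\in\OO(P,\lambda)$, set $y=\varphi_t(x)$, and for a saturated chain $p_0\prec\cdots\prec p_r$ with $p_0\in P^*$ and $p_i\in\tilde P$ for $i\ge1$ consider the partial chain-sums $S_k=\sum_{j=0}^{k}\bigl(\prod_{l=j+1}^{k}t_{p_l}\bigr)y_{p_j}$ occurring in \eqref{eq:chain}, so that $S_0=x_{p_0}$ and $S_k=t_{p_k}S_{k-1}+y_{p_k}$. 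The key lemma is $S_k\le x_{p_k}$, proved by induction: since $t_{p_k}\ge0$ and $S_{k-1}\le x_{p_{k-1}}\le\max_{q\prec p_k}x_q$, the recursion together with $y_{p_k}=x_{p_k}-t_{p_k}\max_{q\prec p_k}x_q$ gives $S_k\le x_{p_k}$. Extending such a chain by one more element $p$ as in \Cref{def:mpp}: if $p\in P^*$ then $t_p=0$ and $S_r\le x_{p_r}\le x_p=\lambda(p)$ is the desired inequality; if $p\in\tilde P$, writing $m=\max_{q\prec p}x_q$ we have $S_r\le x_{p_r}\le m\le x_p$, so $(1-t_p)S_r+t_p m=S_r+t_p(m-S_r)\le m\le x_p$ using $0\le t_p\le1$, i.e.\ $(1-t_p)S_r\le x_p-t_p m=y_p$, which is \eqref{eq:chain}.

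For the reverse inclusion $\psi_t(\OO_t(P,\lambda))\subseteq\OO(P,\lambda)$ I would first unfold the recursion for $\psi_t$, using $t\ge0$ to commute the maxima past the affine steps, to obtain that $x=\psi_t(y)$ satisfies $x_p=\max S_r(y)$ for every $p$, the maximum ranging over all saturated chains $p_0\prec\cdots\prec p_r=p$ with $p_0\in P^*$, $p_i\in\tilde P$ for $1\le i\le r$, and $S_r(y)$ the chain-sum of \eqref{eq:chain}. Then, for $y\in\OO_t(P,\lambda)$ and $p\in\tilde P$, $\max_{q\prec p}x_q$ is the maximum of $S_r(y)$ over chains ending just below $p$; each such chain extends to one of the form in \Cref{def:mpp} with defining inequality $(1-t_p)S_r(y)\le y_p$, and since $1-t_p\ge0$ we may take the maximum to get $(1-t_p)\max_{q\prec p}x_q\le y_p$, hence $x_q\le x_p$ for all $q\prec p$. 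For $p\in P^*$, a cover $q\prec p$ with $q\in P^*$ gives $x_q=\lambda(q)\le\lambda(p)$, while for $q\in\tilde P$ the same chain argument with $t_p=0$ gives $S_r(y)\le\lambda(p)$ for every chain ending at $q$, so $x_q\le\lambda(p)$. I expect the main obstacle to be the bookkeeping of the partial chain-sums $S_k$ and their monotone interaction with the maxima over covers --- verifying the two inequalities $S_k\le x_{p_k}$ and $x_p=\max S_r(y)$ precisely, and checking that every chain invoked really is one of those defining $\OO_t(P,\lambda)$ in \Cref{def:mpp}; once these are in place, the bijection follows formally.
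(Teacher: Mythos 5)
Your proposal is correct and closely parallels the paper's own proof, but it reorganizes the two inclusions in a way worth noting. For $\varphi_t(\OO(P,\lambda))\subseteq\OO_t(P,\lambda)$ the paper estimates the whole chain sum at once by substituting $y_{p_i}\le x_{p_i}-t_{p_i}x_{p_{i-1}}$ and telescoping to get $(1-t_p)x_{p_r}\le y_p$; you package the same computation as a clean one-step induction $S_k=t_{p_k}S_{k-1}+y_{p_k}\le x_{p_k}$, which is mathematically equivalent but arguably tidier to check. The more interesting deviation is in the reverse inclusion: the paper, given a cover $q\prec p$, greedily constructs a single witnessing chain by repeatedly choosing the element achieving $\max_{q'\prec p_i}x_{q'}$, and then evaluates the corresponding instance of \eqref{eq:chain}. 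You instead first unfold the recursion for $\psi_t$ into the closed-form $\psi_t(y)_p=\max_{\Fc}S_r(y)$ over all saturated chains ending at $p$ (using $t\ge 0$ to pull constants through the max) — which is precisely the content of \Cref{rem:explicit-psi}, stated in the paper without proof after the theorem — and then bound $x_q$ by taking the maximum of the chain inequalities. Both routes use the same estimates; yours has the side benefit of proving \Cref{rem:explicit-psi} en route and avoids having to argue that the greedily chosen chain is of the admissible type, while the paper's greedy construction is more economical if one does not want the closed form. The rest (well-definedness via a linear extension, mutual inverseness by induction, and the direct unfolding of $\varphi_t\circ\psi_t=\id$) matches the paper.
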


Note that \(\psi_t\) is well-defined, since all minimal elements in \(P\) are marked.
Given \(t,t'\in\smash{[0,1]^{\tilde P}}\) the maps \(\psi_t\) and \(\varphi_{t'}\) compose to a piecewise-linear bijection 
\begin{equation*}
    \theta_{t,t'} = \varphi_{t'}\circ\psi_t \colon \OO_t(P,\lambda) \longrightarrow \OO_{t'}(P,\lambda),
\end{equation*}
such that \(\varphi_t=\theta_{\mathbf{0},t}\) and \(\psi_t=\theta_{t,\mathbf{0}}\).
We call the maps \(\theta_{t,t'}\) \emph{transfer maps}.

\begin{proof}[{Proof of \Cref{thm:univ-transfer}}]
    We start by showing that the maps are mutually inverse.
    For \(p\in P^*\) -- so in particular for \(p\) minimal in \(P\) -- we immediately obtain \(\psi_t(\varphi_t(x))_p=x_p\) and \(\varphi_t(\psi_t(y))_p=y_p\).
    Hence, let \(p\) be non-minimal, unmarked and assume by induction that \(\psi_t(\varphi_t(x))_q=x_q\) and \(\varphi_t(\psi_t(y))_q=y_q\) hold for all \(q<p\).
    We have
    \begin{align*}
        \psi_t(\varphi_t(x))_p &= \varphi_t(x)_p + t_p \max_{q\prec p} \psi_t(\varphi_t(x))_q = \varphi_t(x)_p + t_p \max_{q\prec p} x_q = x_p \\
        \intertext{and}
        \varphi_t(\psi_t(y))_p &= \psi_t(y)_p - t_p \max_{q\prec p} \psi_t(y)_q = y_p.
    \end{align*}
    Hence, the maps are mutually inverse. 

    We now show that \(\varphi_t\) maps \(\OO(P,\lambda)\) into \(\OO_t(P,\lambda)\).
    Let \(x\in\OO(P,\lambda)\) and \(y=\varphi_t(x)\).
    Given any saturated chain \(p_0\prec p_1 \prec p_2 \prec \cdots \prec p_r \prec p\) with \(p_0\in P^*\), \(p_i\in\tilde P\) for \(i\ge 1\) and \(p\in P\), we have \(y_{p_i} \le x_{p_i} - t_{p_i} x_{p_{i-1}}\) for \(i\ge 1\) by definition of \(\varphi_t\).
    Hence,
    \begin{equation} \label{eq:chaincheck}
        \arraycolsep=1.4pt
        \def\arraystretch{1.4}
        \begin{array}{lll}
            &(1-t_p) \big(
            t_{p_1}\cdots t_{p_r} y_{p_0} + t_{p_2}\cdots t_{p_r} y_{p_1} &+ \cdots + y_{p_{r}}
            \big) \\
            \le
            &(1-t_p) \big( 
            t_{p_1}\cdots t_{p_r} x_{p_0} + t_{p_2}\cdots t_{p_r} (x_{p_1}-t_{p_1} x_{p_0}) &+ \cdots + (x_{p_{r}}-t_{p_r} x_{p_{r-1}})
            \big) \\
            =
            & \multicolumn{2}{l}{(1-t_p) x_{p_{r}}
            \le (1-t_p) \max_{q\prec p} x_q
            \le x_p - t_p \max_{q\prec p} x_q = y_p.}
        \end{array}
    \end{equation}
    Thus, we have shown that \(y\in\OO_t(P,\lambda)\) as it satisfies \eqref{eq:chain} for all chains.

    Finally, we show that \(\psi_t\) maps \(\OO_t(P,\lambda)\) into \(\OO(P,\lambda)\).
    Let \(y\in \OO_t(P,\lambda)\) and \(x=\psi_t(y)\).
    Now consider any covering relation \(q\prec p\).
    If \(q\) is marked, the inequality \eqref{eq:chain} given by the chain \(q\prec p\) yields
    \begin{equation*}
        y_p \ge (1-t_p) x_q.
    \end{equation*}
    If \(q\) is not marked, set \(p_r\coloneqq q\) and inductively pick \(p_{i-1}\) such that \(x_{p_{i-1}} = \max_{q'\prec p_i} x_{q'}\) until ending up at a marked element \(p_0\).
    Inequality \eqref{eq:chain} given by the chain
    \begin{equation*}
        p_0\prec p_1 \prec \cdots \prec p_r=q \prec p.
    \end{equation*}
    still yields
    \begin{align*}
        y_p &\ge 
        (1-t_{p}) \left( 
        t_{p_1}\cdots t_{p_r} y_{p_0} + t_{p_2}\cdots t_{p_r} y_{p_1} + \cdots + y_{p_r}
        \right) \\
        &= 
        (1-t_{p}) \left( 
        t_{p_1}\cdots t_{p_r} x_{p_0} + t_{p_2}\cdots t_{p_r} (x_{p_1} - t_{p_1}x_{p_0}) + \cdots + t_{p_r} (x_{p_{r-1}}-t_{p_{r-1}} x_{p_{r-2}}) + y_q
        \right) \\
        &=
        (1-t_{p}) \left( 
        t_{p_r} x_{p_{r-1}} + y_q
        \right) = (1-t_{p}) \left( t_q \max_{q'\prec q} x_{q'} + y_q \right) = (1-t_p) x_q.
    \end{align*}
    Hence, if \(p\) is not marked, we have
    \begin{equation*}
        x_p = y_p + t_p \max_{q'\prec p} x_{q'} \ge y_p + t_p x_q \ge x_q.
    \end{equation*}
    If \(p\) is marked, \(t_p=0\) so \(x_p=y_p\ge x_q\).
    Thus, all defining conditions of \(\OO(P,\lambda)\) are satisfied.
\end{proof}

\begin{remark} \label{rem:explicit-psi}
    In contrast to the transfer maps defined in \cite[Theorem 3.2]{Sta86} and \cite[Theorem 3.4]{ABS11}, the inverse transfer map \(\psi_t\) in \Cref{thm:univ-transfer} is given using a recursion.
    Unfolding the recursion, we might as well express the inverse transfer map for \(p\in\tilde P\) in the closed form
    \begin{equation*}
        \psi_t(y)_p =
        \max_{\Fc}  \left(t_{p_1} \cdots t_{p_r} y_{p_0} + t_{p_2} \cdots t_{p_r} y_{p_1} + \cdots  + y_{p_r}\right),
    \end{equation*}
    where the maximum ranges over all saturated chains \(\Fc\colon p_0 \prec p_1 \prec \cdots \prec p_r\) with \(p_0\in P^*\), \(p_i\in\tilde P\) for \(i\ge 1\) and \(r\ge 0\) ending in \(p_r=p\).
\end{remark}

In examples it is often convenient to consider the projected polyhedron \(\OO_t(P,\lambda)\) in \(\smash{\R^{\tilde P}}\).
Accordingly we define projected transfer maps.
\begin{definition} \label{def:reduced-transfer}
    Denote by \(\pi_{\tilde P}\) the projection \(\R^P\to\smash{\R^{\tilde P}}\) and by \(\iota_\lambda\colon\smash{\R^{\tilde P}}\to\R^P\) the inclusion given by \(\iota(x)_a = \lambda(a)\) for all \(a\in P^*\).
    Define the projected transfer maps \(\varphi_t,\psi_t\colon\smash{\R^{\tilde P}}\to\smash{\R^{\tilde P}}\) by \(\pi_{\tilde P} \circ \varphi_t \circ \iota_\lambda\) and \(\pi_{\tilde P} \circ \psi_t \circ \iota_\lambda\), respectively.
\end{definition}

\subsection{Subdivision into products of simplices and simplicial cones}\label{sec:mop-JS-subdivision}

In \cite[Section~2.3]{JS14} the authors introduced a polyhedral subdivision of the marked order polytope into products of simplices. We briefly recall the analogous construction of this subdivision for possibly unbounded polyhedra to later transfer it to all marked poset polyhedra.

Assume that \( (P,\lambda)\) is a marked poset with at least all minimal elements marked. 
Let \(\II\colon\varnothing=I_0\subsetneq I_1\subsetneq \cdots\subsetneq I_r=P\) be a chain of order ideals in \(P\). For each \(p\in P\) we denote by \(i(\II,p)\) the smallest index \(k\) for which \(p\in I_k\). This chain is said to be compatible with the marking \(\lambda\), if for any \(a,b\in P^*\),
\begin{equation*}
i(\II,a)<i(\II,b)\ \ \text{if and only if}\ \ \lambda(a)<\lambda(b).
\end{equation*}
Every such chain of order ideals gives a partition \(\pi_\II\) of \(P\) into blocks by letting \(B_k=I_k\backslash I_{k-1}\) for \(k=1,\dots,r\). A block $B_k$ is called restricted, if \(P^*\cap B_k\neq\emptyset\). 

Since \(\II\) is compatible with \(\lambda\), we obtain a quotient marked poset \((P/\pi_\II,\lambda/\pi_\II)\) as in \cite[Proposition~3.9]{Peg17}, where \(P/\pi_\II\) is the induced poset of blocks in $\pi$,  \((P/\pi_\II)^*\) is the set of restricted blocks in $\pi_\II$ and $\lambda/\pi_\II\colon (P/\pi_\II)^*\to\mathbb{R}$ is the marking defined by $(\lambda/\pi_\II)(B_k)=\lambda(a)$ for a restricted block $B_k$ containing a marked element $a$.

The poset \(P/\pi_\II\) admits a linear extension \(P_\II\) by demanding \(B_k\leq B_\ell\) if and only if \(k\leq\ell\). The compatibility of \(\lambda\) with \(\II\) implies that the marking \(\lambda/\pi_\II\) yields a strict marking \(\lambda_\II\) on \(P_\II\). This gives surjections of marked posets
\begin{equation*}
(P,\lambda)\rightarrow (P/\pi_\II,\lambda/\pi_\II)\rightarrow (P_\II,\lambda_\II),
\end{equation*}
where the first map is the quotient map given in \cite[Proposition 3.9]{Peg17}, and the second map is a linear extension. By \cite[Proposition 3.11]{Peg17}, we obtain an inclusion of marked order polyhedra
\begin{equation*}
\OO(P_\II,\lambda_\II)\hookrightarrow \OO(P,\lambda).
\end{equation*}

We let $F_\II$ denote the image of \(\OO(P_\II,\lambda_\II)\) in \(\OO(P,\lambda)\), it consists of all points \(x\in\OO(P,\lambda)\) which are constant on each block \(B_k\) and weakly increasing along the linear order \(B_1,\dots,B_r\) of the blocks. By an argument analogous to \cite[Lemma 2.5]{JS14}, \(F_\II\) is a product of simplices and simplicial cones, which provides a subdivision of \(\OO(P,\lambda)\) into products of simplices and simplicial cones.

\subsection{Integrality, Integral Closure and Unimodular Equivalence}

When \((P,\lambda)\) comes with an integral marking, so \(\lambda(a)\in\Z\) for all \(a\in P^*\), the authors of \cite{ABS11} already showed that \(\OO(P,\lambda)\) and \(\CC(P,\lambda)\) are Ehrhart equivalent lattice polytopes when all the extremal elements in $P$ are marked.
In \cite{Fou16} a necessary and sufficient condition for \(\OO(P,\lambda)\) and \(\CC(P,\lambda)\) to be unimodular equivalent is given, which is generalized to admissible marked chain-order polytopes in \cite{FF16}.
In \textit{loc.cit}, it is also shown that all the admissible marked chain-order polytopes are integrally closed lattice polytopes.

In this section we assume integral markings containing all extremal elements throughout and generalize the results above to not necessarily admissible partitions.
Let us start by showing that under these assumptions all the marked chain-order polytopes are lattice polytopes.

\begin{proposition}
    For \(t\in\smash{\{0,1\}^{\tilde P}}\) the marked chain-order polytope \(\OO_t(P,\lambda)\) is a lattice polytope.
\end{proposition}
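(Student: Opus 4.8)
The plan is to realize $\OO_t(P,\lambda)$ as a finite union of lattice polytopes, each the image of a lattice polytope inside the marked order polytope $\OO(P,\lambda)$ under a \emph{unimodular} affine map. Since a vertex of $\OO_t(P,\lambda)$ that happens to lie inside one of these subpolytopes is automatically a vertex of it, all vertices of $\OO_t(P,\lambda)$ are then lattice points, and boundedness (all extremal elements being marked) finishes the argument. Write $t=\chi_C$ and put $S=C=\{p\in\tilde P : t_p=1\}$; by \Cref{thm:univ-transfer} we have $\OO_t(P,\lambda)=\varphi_t(\OO(P,\lambda))$, and for $p\in\tilde P$ the map is $\varphi_t(x)_p=x_p$ if $p\notin S$ and $\varphi_t(x)_p=x_p-\max_{q\prec p}x_q$ if $p\in S$.

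For each choice function $\sigma$ assigning to every $p\in S$ a lower cover $\sigma(p)\prec p$ (such covers exist because $\tilde P$ contains no minimal element of $P$), set
\[
  D_\sigma \;=\; \OO(P,\lambda)\ \cap\ \bigcap_{p\in S}\ \bigcap_{q\prec p}\ \{\,x\in\R^P : x_q\le x_{\sigma(p)}\,\}.
\]
Each such inequality is a difference inequality $x_q\le x_{\sigma(p)}$ (reading $x_{\sigma(p)}=\lambda(\sigma(p))$ when $\sigma(p)\in P^*$), so $D_\sigma$ is cut out from $\R^P$ by coordinate equations $x_a=\lambda(a)$ and difference inequalities with integral right-hand sides; the coefficient matrix of such a system is an incidence matrix of a digraph and hence totally unimodular, so $D_\sigma$ is a lattice polytope. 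The $D_\sigma$ cover $\OO(P,\lambda)$: given $x\in\OO(P,\lambda)$, choose $\sigma(p)$ attaining $\max_{q\prec p}x_q$ for each $p\in S$, so that $x\in D_\sigma$. (They in fact form a polyhedral subdivision, being the cells cut out by the hyperplane arrangement $x_q=x_{q'}$, but only the covering property is needed below.)

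On $D_\sigma$ one has $\max_{q\prec p}x_q=x_{\sigma(p)}$ for $p\in S$, so $\varphi_t$ restricts there to the affine map $x\mapsto Lx+c$, where $c\in\Z^{\tilde P}$ is a sum of marking values and the linear part $L$ is given by $L(x)_p=x_p-x_{\sigma(p)}$ when $p\in S$ and $\sigma(p)\in\tilde P$, and $L(x)_p=x_p$ otherwise; ordering $\tilde P$ by a linear extension of $P$ makes $L$ lower triangular with unit diagonal, since $\sigma(p)\prec p$, so $\det L=1$ and $\varphi_t|_{D_\sigma}$ is a unimodular affine isomorphism of $\R^{\tilde P}$. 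Hence $\varphi_t(D_\sigma)$ is a lattice polytope, $\varphi_t(D_\sigma)\subseteq\OO_t(P,\lambda)$, and $\bigcup_\sigma\varphi_t(D_\sigma)=\varphi_t(\OO(P,\lambda))=\OO_t(P,\lambda)$. Any vertex $v$ of $\OO_t(P,\lambda)$ lies in some $\varphi_t(D_\sigma)$ and, being an extreme point of the ambient polytope, is an extreme point — hence a vertex — of $\varphi_t(D_\sigma)$, therefore a lattice point. Thus $\OO_t(P,\lambda)$ is a lattice polytope. The only slightly delicate points are that adjoining the inequalities $x_q\le x_{\sigma(p)}$ keeps the defining system of $D_\sigma$ totally unimodular (equivalently, a network system) and that $L$ has determinant $\pm1$; I expect the first of these — identifying the cells of the subdivision as lattice polytopes — to be the main conceptual step. (The same unimodular subdivision, together with the existence of unimodular triangulations of marked order polytopes, also gives the integral closedness asserted in the main theorem.)
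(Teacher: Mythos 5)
Your proof is correct, and it follows the same skeleton as the paper's---subdivide $\OO(P,\lambda)$ into lattice polytopes on which $\varphi_t$ is unimodular, then observe that the vertices of $\OO_t(P,\lambda)$ must occur among the (lattice) vertices of the transferred cells---but it differs in the choice of subdivision and in the key lemma used to make the cells integral. The paper works with the finer subdivision of \Cref{sec:mop-JS-subdivision} into products of simplices and simplicial cones (the cells $F_\II$ indexed by chains of order ideals), and shows each cell is a lattice polytope by identifying it as the image of a marked order polytope of a linear poset under a lattice-preserving map, citing results from \cite{Peg17}. You instead use the coarser subdivision of $\OO(P,\lambda)$ into linearity regions of $\varphi_t$---this is precisely the tropical subdivision of \Cref{sec:tropical}, though you do not need to invoke that framework---and you establish integrality of each cell $D_\sigma$ via total unimodularity of the difference-constraint system (a network matrix with integral right-hand sides coming from the markings). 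Your explicit check that the linear part of $\varphi_t|_{D_\sigma}$ is lower triangular with unit diagonal in any linear extension of $P$ is a clean way of seeing the piecewise-unimodularity that the paper takes for granted from the form of $\varphi_t$. Both routes are valid; yours is more self-contained and elementary, whereas the paper's reuses machinery it has already set up and that it also needs for the subsequent integral-closedness proposition.
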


\begin{proof}
    When \(t\in\smash{\{0,1\}^{\tilde P}}\), the transfer map \(\varphi_t\colon \OO(P,\lambda)\to\OO_t(P,\lambda)\) is piecewise-unimodular.
    In particular, it maps lattice points to lattice points.
    When \(\im(\lambda)\subseteq\Z\), we know that the marked poset polytope \(\OO(P,\lambda)\) is a lattice polytope.
    We consider the subdivision  into products of simplices from \Cref{sec:mop-JS-subdivision}. As the image of the lattice polytope \(\OO(P_\II,\lambda_\II)\) under the lattice-preserving map \(\OO(P_\II,\lambda_\II)\to\OO(P,\lambda)\), each cell \(F_\II\) is a lattice polytope.
    Hence, all vertices in the subdivision of \(\OO(P,\lambda)\) are lattice points.
    Applying \(\varphi_t\) we obtain a subdivision of \(\OO_t(P,\lambda)\) with still all vertices being lattice points.
    Since the vertices of \(\OO_t(P,\lambda)\) have to appear as vertices in the subdivision, we conclude that \(\OO_t(P,\lambda)\) is a lattice polytope.
\end{proof}

\begin{corollary}
    The polytopes \(\OO_t(P,\lambda)\) for \(t\in\smash{\{0,1\}^{\tilde P}}\) are all Ehrhart equivalent.
\end{corollary}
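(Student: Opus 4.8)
The plan is to show that every $\OO_t(P,\lambda)$ with $t\in\{0,1\}^{\tilde P}$ has the same Ehrhart polynomial as $\OO_{\mathbf 0}(P,\lambda)=\OO(P,\lambda)$; since ``Ehrhart equivalent'' is an equivalence relation and $\mathbf 0\in\{0,1\}^{\tilde P}$, this yields the claim. By the preceding proposition all these polytopes are lattice polytopes, so each has a genuine Ehrhart polynomial, and it suffices to prove that $\lvert n\OO(P,\lambda)\cap\Z^P\rvert=\lvert n\OO_t(P,\lambda)\cap\Z^P\rvert$ for every positive integer $n$.

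The bijection realizing this is the transfer map $\varphi_t=\theta_{\mathbf 0,t}\colon\R^P\to\R^P$ of \Cref{thm:univ-transfer}. I would record three properties. First, $\varphi_t$ is a bijection of $\R^P$ with inverse $\psi_t$, and it restricts to a bijection $\OO(P,\lambda)\to\OO_t(P,\lambda)$ (\Cref{thm:univ-transfer}). Second, $\varphi_t$ and $\psi_t$ are positively homogeneous: each linear piece of $\varphi_t$ is literally linear by its defining formula, and the same holds for $\psi_t$ by a short induction on the recursion; hence $\varphi_t(nS)=n\,\varphi_t(S)$ for every $S\subseteq\R^P$ and every $n>0$, so in particular $\varphi_t\bigl(n\OO(P,\lambda)\bigr)=n\OO_t(P,\lambda)$. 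Third --- and this is where $t\in\{0,1\}^{\tilde P}$ enters --- on each of its linear pieces $\varphi_t$ is, in coordinates ordered by a linear extension of $P$, given by a lower-triangular integer matrix with unit diagonal, hence unimodular, and likewise for $\psi_t$; this is exactly the piecewise-unimodularity already used in the proof of the preceding proposition. Consequently $\varphi_t(\Z^P)\subseteq\Z^P$ and $\psi_t(\Z^P)\subseteq\Z^P$, so $\varphi_t$ restricts to a bijection $\Z^P\to\Z^P$.

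Combining these, since $\varphi_t$ is a bijection of $\R^P$ it satisfies $\varphi_t(A\cap B)=\varphi_t(A)\cap\varphi_t(B)$, whence
\[
    \varphi_t\bigl(n\OO(P,\lambda)\cap\Z^P\bigr)=n\OO_t(P,\lambda)\cap\Z^P.
\]
Thus $\varphi_t$ restricts to a bijection between the finite lattice-point sets of $n\OO(P,\lambda)$ and of $n\OO_t(P,\lambda)$; comparing cardinalities for all $n\ge 1$ and using that both are lattice polytopes forces their Ehrhart polynomials to coincide, and transitivity of Ehrhart equivalence finishes the proof.

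I do not expect a serious obstacle: the substance is already contained in \Cref{thm:univ-transfer} and the preceding proposition, and what remains is the standard Stanley-type bookkeeping. The one place to be careful is the third property above --- one needs the two-sided statement $\varphi_t(\Z^P)=\Z^P$ rather than merely $\varphi_t(\Z^P)\subseteq\Z^P$, which is why one must also invoke piecewise-unimodularity of the inverse $\psi_t$ --- together with checking that positive homogeneity genuinely promotes the single bijection $\varphi_t$ to a bijection of each dilate $n\OO(P,\lambda)$.
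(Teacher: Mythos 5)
Your argument is correct and is the same approach the paper intends by its one-line justification that Ehrhart equivalence is ``an immediate consequence of the transfer map being a piecewise-unimodular bijection.'' You simply make explicit the two facts left implicit there---positive homogeneity of $\varphi_t$ (so $\varphi_t(n\OO(P,\lambda)) = n\OO_t(P,\lambda)$) and lattice preservation by both $\varphi_t$ and $\psi_t$ (so the bijection restricts to lattice points of each dilate)---and combine them correctly.
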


\begin{proof}
    This is an immediate consequence of the transfer map being a piecewise-unimodular bijection when \(t\in\smash{\{0,1\}^{\tilde P}}\).
\end{proof}

\begin{proposition}
    The polytopes \(\OO_t(P,\lambda)\) for \(t\in\smash{\{0,1\}^{\tilde P}}\) are all integrally closed.
\end{proposition}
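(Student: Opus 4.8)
The plan is to lift the subdivision of the marked order polytope from \Cref{sec:mop-JS-subdivision} through the transfer map $\varphi_t$ and to check integral closure cell by cell. Recall that a lattice polytope $Q$ is integrally closed precisely when, for every $k\ge 1$, each lattice point of $kQ$ is a sum of $k$ lattice points of $Q$. The first thing I would record is the elementary observation that this property is inherited from covers: if a lattice polytope $Q$ is the union of integrally closed lattice subpolytopes $Q_1,\dots,Q_m$, then $Q$ is integrally closed, because $kQ=\bigcup_i kQ_i$, so any $z\in kQ\cap\Z^n$ lies in some $kQ_i$ and there decomposes into $k$ lattice points of $Q_i\subseteq Q$. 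It therefore suffices to cover $\OO_t(P,\lambda)$ by integrally closed lattice polytopes.

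Such a cover is given by the images $\varphi_t(F_\II)$ of the cells $F_\II$ of the subdivision of $\OO(P,\lambda)$, as $\II$ ranges over the chains of order ideals compatible with $\lambda$; they cover $\OO_t(P,\lambda)$ since $\varphi_t$ is a bijection $\OO(P,\lambda)\to\OO_t(P,\lambda)$ and the $F_\II$ cover $\OO(P,\lambda)$. I would first check that $\varphi_t$ is \emph{affine} on each $F_\II$: a point $x\in F_\II$ is constant on blocks with block values weakly increasing along the block order, so for an unmarked $p$ the quantity $\max_{q\prec p}x_q$ equals the block value at a lower cover $q^*$ of $p$ of maximal block index, and $q^*$ depends only on $\II$ and $p$; hence $\varphi_t(x)_p=x_p-t_px_{q^*}$ on $F_\II$. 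Since $q^*<p$, extending this formula over $\R^{\tilde P}$ gives a unipotent affine automorphism (lower-triangular with respect to a linear extension, off-diagonal entries in $\{0,-1\}$ because $t\in\{0,1\}^{\tilde P}$), that is, a unimodular equivalence agreeing with $\varphi_t$ on $F_\II$. Consequently $\varphi_t(F_\II)$ is unimodularly equivalent to $F_\II$, in particular a lattice polytope; it remains to see it is integrally closed.

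For this I would use that $F_\II$ is unimodularly equivalent to the marked order polytope $\OO(P_\II,\lambda_\II)$, where $P_\II$ is a chain of blocks $B_1<\dots<B_r$ with a strict integral marking on the restricted blocks and with $B_1,B_r$ restricted (all extremal elements of $P$ being marked). Imposing the equations on restricted blocks and the cover inequalities $x_{B_k}\le x_{B_{k+1}}$ realizes $\OO(P_\II,\lambda_\II)$ as a product, indexed by the maximal runs of consecutive non-restricted blocks lying between two restricted blocks of marking values $\mu\le\mu'$, of the polytopes $\{\mu\le x_1\le\dots\le x_s\le\mu'\}$; each such factor is an integral translate of the $(\mu'-\mu)$-th dilate of the order simplex $\nabla_s=\{0\le x_1\le\dots\le x_s\le 1\}$. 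Now $\nabla_s$ is unimodular, hence integrally closed; dilates and lattice translates of integrally closed lattice polytopes are integrally closed; and a product of integrally closed lattice polytopes is integrally closed. Therefore every $F_\II$, hence every $\varphi_t(F_\II)$, is integrally closed, and by the first paragraph so is $\OO_t(P,\lambda)$.

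I expect the one point requiring genuine care to be the verification that $\varphi_t$ is affine — not merely piecewise-linear — on each cell $F_\II$, so that $\varphi_t(F_\II)$ is truly a unimodular copy of $F_\II$ rather than a further-subdivided region; this is exactly what the description of $F_\II$ via blocks provides. The remaining ingredients (integral closure being inherited by covers, by dilation, by lattice translation, and by products, and the unimodularity of the order simplex) are standard.
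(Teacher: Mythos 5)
Your proof is correct and follows essentially the same route as the paper: cover $\OO_t(P,\lambda)$ by the transferred cells $\varphi_t(F_\II)$, observe that $\varphi_t$ is unimodular on each $F_\II$ (so it suffices to treat the $F_\II$ themselves), identify $F_\II$ with $\OO(P_\II,\lambda_\II)$, and reduce to products of integer dilates of unimodular order simplices. You spell out a couple of steps the paper leaves implicit — the cover lemma for integral closure and the explicit check that $\varphi_t$ is affine (not merely piecewise-linear) on each cell — but the decomposition and key reduction are identical.
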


\begin{proof}
    We will reduce \(\OO_t(P,\lambda)\) being integrally closed to the fact that unimodular simplices are integrally closed.
    Since we have a polyhedral subdivision of \(\OO_t(P,\lambda)\) into cells \(\varphi_t(F_\II)\), it suffices to show that each cell is integrally closed.
    On the cell \(F_\II\), the transfer map \(\varphi_t\) is the restriction of a unimodular map \(\R^P\to\R^P\).
    Hence, it is enough to show that each cell \(F_\II\) in the subdivision of \(\OO(P,\lambda)\) is integrally closed.
    In fact, since \(F_\II\) is the image of \(\OO(P_\II,\lambda_\II)\) under a map that identifies the affine lattices spanned by the polytopes, it suffices to show that marked order polytopes associated to linear posets with integral markings are integrally closed.
    Since these are products of marked order polytopes associated to linear posets with integral markings only at the minimum and maximum, it is enough to show that these are integrally closed.
    However, these are just integral dilations of unimodular simplices.
\end{proof}

Having identified a family of Ehrhart equivalent integrally closed lattice polytopes, we now move on to the question of unimodular equivalences within this family.

Given a marked poset \((P,\lambda)\), we call an element \(p\in\tilde P\) a \emph{star element} if \(p\) is covered by at least two elements and there are at least two different saturated chains from a marked element to \(p\).
This notion has been used in \cite{FF16} to study unimodular equivalence of admissible marked chain-order polytopes.

A finer notion we will use in our discussion is that of a \emph{chain-order star element} with respect to a partition \(C\sqcup O\) of \(\tilde P\).

\begin{definition}
    Given a partition \(\tilde P=C\sqcup O\), an element \(q\in O\) is called a \emph{chain-order star element} if there are at least two different saturated chains \(s\prec q_1\prec \cdots \prec q_k \prec q\) with \(s\in P^*\sqcup O\) and all \(q_i\in C\) and there are at least two different saturated chains \(q \prec q_1\prec \cdots \prec q_k \prec s\) with \(s\in P^*\sqcup O\) and all \(q_i\in C\).
\end{definition}

Note that if \(C\sqcup O\) and \((C\sqcup\{q\})\sqcup(O\setminus\{q\})\) are admissible partitions for some \(q\in O\), i.e., \(C\) is an order ideal in \(\tilde P\) and \(q\) is minimal in \(O\), then \(q\) is an \((O,C)\)-star element if and only it is a star element in the sense of \cite{FF16}.

\begingroup
\makeatletter
\apptocmd{\theproposition}{\unless\ifx\protect\@unexpandable@protect\protect\footnotemark\fi}{}{}
\makeatother

\begin{proposition} \label{prop:unimodular-equiv}
    \footnotetext{In the proof of \Cref{prop:unimodular-equiv} we do not use integrality of the marking or \(P^*\) containing all extremal elements. Hence, the statement still holds when \(P^*\) only contains all minimal elements and the marking is not integral.}
    Let \(C\sqcup O\) be a partition of \(\tilde P\) and \(q\in O\) not a chain-order star element.
    Let \(O'=O\setminus\{q\}\) and \(C'=C\sqcup\{q\}\), then \(\OO_{C,O}(P,\lambda)\) and \(\OO_{C',O'}(P,\lambda)\) are unimodular equivalent.
\end{proposition}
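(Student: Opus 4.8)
The plan is to exhibit a single affine bijection with unimodular linear part carrying one polytope onto the other, supported on the single coordinate $x_q$. Write $t=\chi_C$ and $t'=\chi_{C'}$, so that $\OO_{C,O}(P,\lambda)=\OO_t(P,\lambda)$ and $\OO_{C',O'}(P,\lambda)=\OO_{t'}(P,\lambda)$ by definition, with the facet descriptions of \Cref{prop:mcop}; since $q\in O$ we have $t_q=0$, $t'_q=1$, while $t'_p=t_p$ for all $p\neq q$. The first step is to compute the transfer map $\theta_{t,t'}=\varphi_{t'}\circ\psi_t$ of \Cref{thm:univ-transfer} explicitly. Because $\varphi_{t'}$ and $\varphi_t$ share the same coordinate formula away from $q$ and $\varphi_t\circ\psi_t=\id$, one obtains $\theta_{t,t'}(y)_p=y_p$ for all $p\neq q$, and $\theta_{t,t'}(y)_q=y_q-\max_{q'\prec q}\psi_t(y)_{q'}$. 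Using the closed form of \Cref{rem:explicit-psi} specialized to $t=\chi_C$ (equivalently, by unwinding the recursion for $\psi_t$ downward through $C$-elements), this last maximum equals $\max_{\Fc}\bigl(y_{s}+y_{q_1}+\cdots+y_{q_k}\bigr)$, the maximum ranging over the saturated chains $\Fc\colon s\prec q_1\prec\cdots\prec q_k\prec q$ with $s\in P^*\sqcup O$ and all $q_i\in C$ — exactly the ``incoming'' chains of the chain-order star condition, i.e.\ the chains into $q$ giving chain inequalities in \Cref{prop:mcop}; here $y_s$ means $\lambda(s)$ when $s\in P^*$. There is always at least one such chain, since $q$ is unmarked, hence non-minimal.

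Now the case analysis enters. As $q$ is not a chain-order star element, either it has a unique incoming chain $\Fc_0\colon s_0\prec q_1\prec\cdots\prec q_k\prec q$, or it has a unique ``outgoing'' chain $\Fc_1\colon q\prec q'_1\prec\cdots\prec q'_\ell\prec s_1$. In the first case the maximum above collapses to the single affine functional $y\mapsto y_{s_0}+y_{q_1}+\cdots+y_{q_k}$, so $\theta_{t,t'}$ is globally the affine map fixing all coordinates but $q$ and sending $y_q\mapsto y_q-y_{s_0}-y_{q_1}-\cdots-y_{q_k}$; its linear part is unipotent with integer entries, hence unimodular, and by \Cref{thm:univ-transfer} it restricts to a bijection $\OO_{C,O}(P,\lambda)\to\OO_{C',O'}(P,\lambda)$. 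In the second case $\theta_{t,t'}$ is genuinely only piecewise-linear, so I would instead use the map $\Xi\colon\R^{\tilde P}\to\R^{\tilde P}$ fixing all coordinates but $q$ and sending $x_q\mapsto x_{s_1}-x_{q'_1}-\cdots-x_{q'_\ell}-x_q$ (again $x_{s_1}:=\lambda(s_1)$ if $s_1\in P^*$); its linear part is diagonal with a single $-1$, hence unimodular, and $\Xi$ is an involution. One then checks directly from \Cref{prop:mcop} that $x\in\OO_{C,O}(P,\lambda)$ if and only if $\Xi(x)\in\OO_{C',O'}(P,\lambda)$: the equations and all inequalities not involving $x_q$ are literally unchanged; $0\le\Xi(x)_q$ is exactly the chain inequality of $\OO_{C,O}$ attached to $\Fc_1$, matching the new inequality $0\le x_q$ for $q\in C'$; and a chain inequality of $\OO_{C,O}$ coming from a chain $a\prec p_1\prec\cdots\prec p_r\prec q$ ending at $q$ becomes, after substitution, precisely the chain inequality of $\OO_{C',O'}$ for the concatenated chain $a\prec p_1\prec\cdots\prec p_r\prec q\prec q'_1\prec\cdots\prec q'_\ell\prec s_1$ (legitimate because $q$ is now an interior element of $C'$), and conversely every chain inequality of $\OO_{C',O'}$ with $q$ interior factors as such a concatenation with second part $\Fc_1$ by uniqueness of $\Fc_1$. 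The first case admits the same direct description: the transfer map there is $x_q\mapsto x_q-x_{s_0}-x_{q_1}-\cdots-x_{q_k}$, verified against \Cref{prop:mcop} by the analogous bookkeeping, now concatenating $\Fc_0$ with the chains leaving $q$.

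The step I expect to be the main obstacle is exactly this case split together with the combinatorial verification that concatenation at $q$ furnishes a bijection between the two families of chain inequalities; the uniqueness hypothesis is used precisely once in each case, to force one side of every concatenated chain to equal the fixed chain $\Fc_0$ (resp.\ $\Fc_1$). One subtlety worth isolating is the degenerate possibility that $q$ is maximal in $P$: then $q$ has no outgoing chain, so one is forced into the first case and should confirm that $\Fc_0$ is unique there — which holds automatically once all extremal elements of $P$ are marked, the situation of the marked poset polytopes.
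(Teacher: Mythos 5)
Your proof takes essentially the same route as the paper's: the same case split on ``unique incoming chain'' versus ``unique outgoing chain,'' and the same affine maps changing only the $q$-coordinate (your $\Xi$ in the second case is exactly the paper's $\Psi$ there). The one small streamlining is that in the unique-incoming case you recognize the map as the transfer map $\theta_{t,t'}$ and invoke \Cref{thm:univ-transfer} for bijectivity rather than re-matching the inequalities of \Cref{prop:mcop} as the paper does; your closing remark about the degenerate case of $q$ maximal (so no outgoing chain) correctly isolates the point where, in the unbounded setting, the case analysis as written could fail to cover all non-star $q$.
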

\endgroup

\begin{proof}
    We have to consider the following two cases.
    \begin{enumerate}
        \item There is exactly one saturated chain \(s\prec q_1\prec \cdots \prec q_k \prec q\) with \(s\in P^*\sqcup O\) and all \(q_i\in C\).            
            Define the unimodular map \(\Psi\colon\R^P\to\R^P\) by letting
            \begin{equation*}
                \Psi(x)_p =
                \begin{cases}
                    x_q - x_s - \cdots - x_{q_k} &\text{if \(p=q\),} \\
                    x_p &\text{otherwise.}
                \end{cases}
            \end{equation*}
            We claim that \(\Psi(\OO_{C,O}(P,\lambda)) = \OO_{C',O'}(P,\lambda)\).
            The defining inequalities of \(\OO_{C,O}(P,\lambda)\) involving \(x_q\) are the following:
            \begin{enumerate}
                \item for each saturated chain \(q\prec p_1 \prec p_2 \cdots \prec p_r \prec b\) with \(b\in P^*\sqcup O\), \(p_i\in C\) and \(r\ge 0\) an inequality
                    \begin{equation*}
                        x_{p_1} + \cdots + x_{p_r} \le x_b - x_q,
                    \end{equation*}
                \item the inequality
                    \begin{equation*}
                        x_{q_1} + \cdots + x_{q_k} \le x_q - x_s.
                    \end{equation*}
            \end{enumerate}
            Applying \(\Psi\), these translate to
            \begin{enumerate}
                \item for each saturated chain \(q\prec p_1 \prec p_2 \cdots \prec p_r \prec b\) with \(b\in P^*\sqcup O\), \(p_i\in C\) and \(r\ge 0\) an inequality
                    \begin{equation}
                        x_{q_1} + \cdots + x_{q_k} + x_q + x_{p_1} + \cdots + x_{p_r} \le x_b - x_s, \label{eq:start-in-q}
                    \end{equation}
                \item the inequality
                    \begin{equation}
                        0 \le x_q. \label{eq:end-in-q}
                    \end{equation}
            \end{enumerate}
            These are exactly the defining properties of \(\OO_{C',O'}(P,\lambda)\) involving \(x_q\):
            the saturated chains \(a\prec p_1 \prec p_2 \cdots \prec p_r \prec b\) with \(a,b\in P^*\sqcup O'\) and all \(p_i\in C'\) involving \(q\) at index \(k\), must have \(a=s\) and \(p_i=q_i\) for \(i\le k\), so they yield the inequalities in \eqref{eq:start-in-q}.
            The inequality in \eqref{eq:end-in-q} is what we get from \(q\in C'\).
        \item There is exactly one saturated chain \(q \prec q_1\prec \cdots \prec q_k \prec s\) with \(s\in P^*\sqcup O\) and all \(q_i\in C\).
            An analogous argument as above shows that in this case the map \(\Psi\colon\R^P\to\R^P\) defined by
            \begin{equation*}
                \Psi(x)_p =
                \begin{cases}
                    x_s - x_q - \cdots - x_{q_k} &\text{if \(p=q\),} \\
                    x_p &\text{otherwise}
                \end{cases}
            \end{equation*}
            yields an unimodular equivalence of \(\OO_{C,O}(P,\lambda)\) and \(\OO_{C',O'}(P,\lambda)\).
            In this case every chain involving \(q\) that is relevant for \(\OO_{C',O'}(P,\lambda)\) must end in \(\cdots \prec q \prec q_1\prec \cdots \prec q_k \prec s\). \qedhere
    \end{enumerate}
\end{proof}

\subsection{Combinatorial Types}

Having studied the marked chain-order polyhedra obtained for \(t\in\smash{\{0,1\}^{\tilde P}}\), we will now consider intermediate and generic parameters.
In this section we show that the combinatorial type of \(\OO_t(P,\lambda)\) stays constant when \(t\) varies inside the relative interior of a face of the parametrizing hypercube \(\smash{[0,1]^{\tilde P}}\).

The idea is to translate whether a defining inequality of \(\OO_t(P,\lambda)\) is satisfied for some \(\varphi_t(x)\) with equality into a condition on \(x\) depending only on the face the parameter \(t\) is contained in.
The key ingredient will be a relation on \(P\) depending on \(x\in\OO(P,\lambda)\).

\begin{definition} \label{def:maximizing-rel}
    Given \(x\in \OO(P,\lambda)\) let \(\dashv_x\) be the relation on \(P\) given by
    \begin{equation*}
        q \dashv_x p \quad\Longleftrightarrow\quad
        q\prec p \enskip\text{ and }\enskip x_q = \max_{q'\prec p} x_{q'}.
    \end{equation*}
\end{definition}

\begin{proposition} \label{prop:ineq-pullback}
    Let \(x\in\OO(P,\lambda)\). Given a saturated chain \(p_0\prec p_1\prec \cdots \prec p_r \prec p\) with \(p_0\in P^*\), \(p_i\in \tilde P\) for \(i\ge 1\) and \(p\in P\), the corresponding defining inequality \eqref{eq:chain} is satisfied with equality by \(\varphi_t(x)\) if and only if one of the following is true:
    \begin{enumerate}
        \item \(t_p=1\) and \(x_p = \max_{q\prec p} x_q\),
        \item \(t_p<1\) and \(x_p = x_{p_r}\) as well as
            \begin{equation*}
                p_{k-1} \dashv_x p_k \dashv_x \cdots \dashv_x p_r,
            \end{equation*}
            where \(k\ge 1\) is the smallest index such that \(t_{p_i}>0\) for all \(i\ge k\).
    \end{enumerate}
\end{proposition}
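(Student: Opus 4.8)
The plan is to revisit the chain of estimates used in the proof of \Cref{thm:univ-transfer}. Write $y = \varphi_t(x)$ and, for brevity, $m_p = \max_{q \prec p} x_q$. Equation~\eqref{eq:chaincheck} presents the defining inequality \eqref{eq:chain} evaluated at $y$ as a composite of three successive ``$\le$'' steps: \textbf{(A)} replacing each $y_{p_i} = x_{p_i} - t_{p_i} m_{p_i}$ (for $i \ge 1$) by the larger quantity $x_{p_i} - t_{p_i} x_{p_{i-1}}$, after which the expression telescopes to $(1-t_p) x_{p_r}$; \textbf{(B)} the estimate $(1-t_p) x_{p_r} \le (1-t_p) m_p$; and \textbf{(C)} the estimate $(1-t_p) m_p \le x_p - t_p m_p = y_p$. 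Hence \eqref{eq:chain} holds with equality at $y$ if and only if each of (A), (B), (C) is an equality, and the task reduces to characterizing when that occurs.

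Steps (B) and (C) are easy: (C) is tight iff $x_p = m_p$, and since $1 - t_p \ge 0$ and $x_{p_r} \le m_p$ because $p_r \prec p$, step (B) is tight iff $t_p = 1$ or $x_{p_r} = m_p$. For step (A) I assume $t_p < 1$ (if $t_p = 1$ the step reads $0 \le 0$ and is trivially tight). Expanding its left-hand side as $(1-t_p)\sum_{i=0}^{r} c_i\, y_{p_i}$ with $c_i = t_{p_{i+1}} \cdots t_{p_r}$ and $c_r = 1$, using $y_{p_0} = x_{p_0}$ and the bounds $y_{p_i} \le x_{p_i} - t_{p_i} x_{p_{i-1}}$ for $i \ge 1$, the step is tight iff for every $i \in \{1,\dots,r\}$ with $c_i > 0$ and $t_{p_i} > 0$ one has $x_{p_{i-1}} = m_{p_i}$, i.e. $p_{i-1} \dashv_x p_i$. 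Now the condition ``$c_i > 0$ and $t_{p_i} > 0$'' says precisely that $t_{p_j} > 0$ for all $j \in \{i,\dots,r\}$, which by the definition of $k$ holds exactly when $i \ge k$; therefore step (A) is tight iff $p_{k-1} \dashv_x p_k \dashv_x \cdots \dashv_x p_r$.

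It remains to combine the three conditions. If $t_p = 1$, steps (A) and (B) are automatic, since their common factor $1 - t_p$ vanishes, so \eqref{eq:chain} is tight at $y$ iff step (C) is, i.e. iff $x_p = m_p$; this is case~(1). If $t_p < 1$, step (B) forces $x_{p_r} = m_p$ and step (C) forces $x_p = m_p$; as $x_{p_r} \le m_p \le x_p$ for every $x \in \OO(P,\lambda)$, these two are jointly equivalent to the single equation $x_p = x_{p_r}$. Together with the condition extracted from step (A), this is exactly case~(2).

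The only part demanding real care is the bookkeeping in step (A): identifying the coefficients $c_i$ correctly, verifying that the substitution followed by telescoping collapses exactly to $(1-t_p) x_{p_r}$ (already carried out in \eqref{eq:chaincheck}), and confirming that ``$c_i > 0$ and $t_{p_i} > 0$'' is genuinely equivalent to ``$i \ge k$''. This includes checking the degenerate cases $r = 0$, where $k = 1$ and both nontrivial parts of case~(2) collapse to $x_p = x_{p_0}$, and $t_{p_r} = 0$, where $k = r+1$ and the $\dashv_x$-chain in case~(2) is empty. Everything else is a direct transcription of the inequalities established in \Cref{thm:univ-transfer}.
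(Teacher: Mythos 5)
Your proof is correct and follows essentially the same route as the paper: both pull the chain of estimates from \eqref{eq:chaincheck} apart and characterize when each step is tight. You are slightly more explicit — decomposing into the three labelled steps (A), (B), (C), writing the coefficients $c_i$ and checking the condition ``$c_i > 0$ and $t_{p_i} > 0 \iff i \ge k$'' carefully, and noting that steps (B) and (C) jointly reduce to $x_p = x_{p_r}$ via the sandwich $x_{p_r} \le m_p \le x_p$ — whereas the paper compresses this to ``inspecting the estimation in \eqref{eq:chaincheck}'' after first discarding the vanishing terms in \eqref{eq:shortened}, but the content is identical.
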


\begin{proof}
    Let \(y=\varphi_t(x)\in\OO_t(P,\lambda)\).
    When \(t_p=1\), the inequality \eqref{eq:chain} for \(y\) reads \(0\le y_p\) which is equivalent to
    \begin{equation*}
        \max_{q\prec p} x_q \le x_p.
    \end{equation*}

    When \(t_p<1\) we may simplify \eqref{eq:chain} to
    \begin{equation} \label{eq:shortened}
        (1-t_p) \left( 
        t_{p_k}\cdots t_{p_r} y_{p_{k-1}} + \cdots + y_{p_{r}}
        \right) \le y_p,
    \end{equation}
    where \(k\ge 1\) is the smallest index such that \(t_{p_i}>0\) for all \(i\ge k\). 
    The coefficients on the left hand side of \eqref{eq:shortened} are all strictly positive and inspecting the estimation in \eqref{eq:chaincheck} yields equality if and only if
    \begin{align*}
        x_{p_{i-1}} &= \max_{q\prec p_i} x_q \quad\text{for \(i\ge k\) \quad and} \\
        x_{p_r} &= x_p. \qedhere
    \end{align*}
\end{proof}

Since the conditions of \Cref{prop:ineq-pullback} only depend on each \(t_p\) being \(0\), \(1\) or in between, we obtain the following corollary.

\begin{corollary} \label{cor:combtype}
    The combinatorial type of \(\OO_t(P,\lambda)\) is constant along relative interiors of faces of the parametrizing hypercube \(\smash{[0,1]^{\tilde P}}\).
\end{corollary}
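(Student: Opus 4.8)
The plan is to show that the face lattice $\FF(\OO_t(P,\lambda))$ depends on $t$ only through its \emph{type}, i.e.\ the function $\tilde P\to\{0,\ast,1\}$ recording for each $p\in\tilde P$ whether $t_p$ is $0$, is $1$, or lies in $(0,1)$. Since two parameters lie in the relative interior of one and the same face of $\smash{[0,1]^{\tilde P}}$ precisely when they have equal type, this is exactly the assertion. The bridge to $t$ is the transfer map: by \Cref{thm:univ-transfer} the map $\theta_{t,t'}=\varphi_{t'}\circ\psi_t$ is a bijection $\OO_t(P,\lambda)\to\OO_{t'}(P,\lambda)$, and I will argue that when $t$ and $t'$ have equal type this bijection carries faces to faces.

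First I would fix notation: let $\mathcal C$ be the (finite, since $P$ is finite) set of saturated chains $p_0\prec p_1\prec\cdots\prec p_r\prec p$ of the shape occurring in \Cref{def:mpp}, so that for \emph{every} $t$ the polyhedron $\OO_t(P,\lambda)$ is cut out, inside the fixed affine subspace $L=\{x\in\R^P : x_a=\lambda(a)\text{ for all }a\in P^*\}$, by the inequalities \eqref{eq:chain} indexed by $\mathcal C$. For $c\in\mathcal C$ and $y\in\OO_t(P,\lambda)$ say $c$ is \emph{tight at} $y$ if \eqref{eq:chain} holds with equality at $y$, and for $x\in\OO(P,\lambda)$ put $T_t(x)=\{c\in\mathcal C : c\text{ is tight at }\varphi_t(x)\}$. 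The content of \Cref{prop:ineq-pullback} is that membership of $c$ in $T_t(x)$ is decided by conditions on $x$ that refer to $t$ only through its type; hence $T_t(x)=T_{t'}(x)$ for all $x\in\OO(P,\lambda)$ whenever $t$ and $t'$ have the same type. Substituting $x=\psi_t(y)$ and using $\theta_{t,t'}(y)=\varphi_{t'}(\psi_t(y))$, this says: for $y\in\OO_t(P,\lambda)$, the set of chains tight at $y$ equals the set of chains tight at $\theta_{t,t'}(y)\in\OO_{t'}(P,\lambda)$.

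Next I would invoke the standard description of the faces of a (possibly unbounded) polyhedron \cite{Zie95}: the nonempty faces of $\OO_t(P,\lambda)$ are exactly the sets $F^{(t)}_J=\{y\in\OO_t(P,\lambda) : \text{every }c\in J\text{ is tight at }y\}$ for $J\subseteq\mathcal C$, and $\FF(\OO_t(P,\lambda))$ is the collection of these sets ordered by inclusion (together with $\varnothing$ as least element). By the previous step $\theta_{t,t'}$ maps $F^{(t)}_J$ onto $F^{(t')}_J$ for every $J\subseteq\mathcal C$; being a bijection $\OO_t(P,\lambda)\to\OO_{t'}(P,\lambda)$, it therefore restricts to a bijection of the two face collections that preserves emptiness and preserves inclusions in both directions (apply $\theta_{t,t'}$ and $\theta_{t',t}$). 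Hence $F^{(t)}_J\mapsto F^{(t')}_J$ is a well-defined order isomorphism $\FF(\OO_t(P,\lambda))\cong\FF(\OO_{t'}(P,\lambda))$ whenever $t$ and $t'$ have equal type, which proves the corollary.

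The only genuine obstacle is the middle step: a priori $\theta_{t,t'}$ is merely a piecewise-linear bijection and need not respect faces at all, so everything rests on upgrading \Cref{prop:ineq-pullback} to the pointwise statement that $\theta_{t,t'}$ preserves the tight-inequality pattern; once that is in hand, the face-lattice isomorphism is purely formal. Two minor points to keep straight: the equations $x_a=\lambda(a)$ are common to all $t$ and only pin down the ambient affine subspace $L$, so they play no combinatorial role (equivalently one may work with the projections in $\R^{\tilde P}$); and one should note that ``same relative-open face of $\smash{[0,1]^{\tilde P}}$'' is literally synonymous with ``same type'', so no separate argument is needed for that translation.
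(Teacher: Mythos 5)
Your proposal is correct and takes the same approach as the paper: the paper's one-sentence proof simply invokes \Cref{prop:ineq-pullback}, observing that the tight-inequality pattern of $\varphi_t(x)$ depends on $t$ only through whether each $t_p$ is $0$, $1$, or in between, and you correctly unpack exactly what this means — the transfer map $\theta_{t,t'}$ preserves which chain inequalities are tight, and hence (by the standard identification of faces with tight-index sets) induces a face-lattice isomorphism. Your explicit identification of the well-definedness of $F^{(t)}_J \mapsto F^{(t')}_J$ and the two-sided order preservation via $\theta_{t,t'}$ and $\theta_{t',t}$ is a faithful filling-in of the details the paper leaves implicit.
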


Furthermore, some of the \(t_p\) do not affect the combinatorial type at all:

\begin{proposition} \label{prop:irrelevant-p}
    The combinatorial type of \(\OO_t(P,\lambda)\) does not depend on \(t_p\) for \(p\in\tilde P\) such that there is a (unique) chain \(p_1\prec p_2 \prec \cdots \prec p_r \prec p_{r+1}=p\), where all \(p_i\in\tilde P\), \(p_1\) covers only marked elements and \(p_i\) is the only element covered by \(p_{i+1}\) for \(i=1,\dots,r\).
\end{proposition}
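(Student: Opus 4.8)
The plan is to show that whenever \(t,t'\in[0,1]^{\tilde P}\) agree on \(\tilde P\setminus\{p\}\), the polyhedra \(\OO_t(P,\lambda)\) and \(\OO_{t'}(P,\lambda)\) are affinely isomorphic; this is precisely the assertion that the combinatorial type is independent of \(t_p\). The tool is the transfer map \(\theta_{t,t'}=\varphi_{t'}\circ\psi_t\) from \Cref{thm:univ-transfer}, which is a priori only a piecewise-linear bijection \(\OO_t(P,\lambda)\to\OO_{t'}(P,\lambda)\). I claim that under the hypothesis on \(p\) it is in fact the restriction of a genuine affine automorphism of \(\R^{\tilde P}\). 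I work throughout with the projected transfer maps of \Cref{def:reduced-transfer} and write \(\hat y=\iota_\lambda(y)\), so that \(\hat y_q=y_q\) for \(q\in\tilde P\) and \(\hat y_a=\lambda(a)\) for \(a\in P^*\).

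First I would compute \(\theta_{t,t'}\) coordinate by coordinate. For \(q\in\tilde P\setminus\{p\}\) one has \(t'_q=t_q\), so
\[
\theta_{t,t'}(y)_q=\varphi_{t'}(\psi_t(\hat y))_q=\psi_t(\hat y)_q-t_q\max_{q'\prec q}\psi_t(\hat y)_{q'}=\varphi_t(\psi_t(\hat y))_q=y_q,
\]
since \(\varphi_t\) and \(\psi_t\) are mutually inverse; thus \(\theta_{t,t'}\) fixes every coordinate but \(p\). For the \(p\)-coordinate, the hypothesis forces \(p=p_{r+1}\) to cover only \(p_r\) (when \(r\ge1\)), resp.\ only marked elements (when \(r=0\)), so \(\max_{q'\prec p}\psi_t(\hat y)_{q'}=\ell(y)\) with \(\ell(y):=\psi_t(\hat y)_{p_r}\) for \(r\ge1\) and \(\ell(y):=\max_{a\prec p}\lambda(a)\) for \(r=0\). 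Hence
\[
\theta_{t,t'}(y)_p=\psi_t(\hat y)_p-t'_p\,\ell(y)=\bigl(y_p+t_p\,\ell(y)\bigr)-t'_p\,\ell(y)=y_p+(t_p-t'_p)\,\ell(y).
\]

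It remains to check that \(\ell\) is affine and independent of the coordinate \(y_p\). For \(r=0\) it is constant. For \(r\ge1\) I would descend the chain \(p_1\prec\cdots\prec p_r\): since \(p_1\) covers only marked elements, \(\psi_t(\hat y)_{p_1}=y_{p_1}+t_{p_1}\max_{a\prec p_1}\lambda(a)\) is affine, and since \(p_{i+1}\) covers only \(p_i\), inductively \(\psi_t(\hat y)_{p_{i+1}}=y_{p_{i+1}}+t_{p_{i+1}}\psi_t(\hat y)_{p_i}\) is affine in \(y_{p_1},\dots,y_{p_{i+1}}\). Thus \(\ell(y)=\psi_t(\hat y)_{p_r}\) is affine in \(y_{p_1},\dots,y_{p_r}\), and as \(p\notin\{p_1,\dots,p_r\}\) it does not involve \(y_p\). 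Consequently \(\theta_{t,t'}\colon\R^{\tilde P}\to\R^{\tilde P}\) is the affine map \(y\mapsto y+(t_p-t'_p)\,\ell(y)\,e_p\), which is an automorphism with inverse \(y\mapsto y+(t'_p-t_p)\,\ell(y)\,e_p\) (using that \(\ell\) ignores the \(p\)-coordinate, so it is unchanged by the map). By \Cref{thm:univ-transfer} this automorphism maps \(\OO_t(P,\lambda)\) onto \(\OO_{t'}(P,\lambda)\), so the two polyhedra are combinatorially equivalent, which proves the proposition.

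I expect the only real obstacle to be the recognition that the transfer map degenerates from piecewise-linear to affine in this situation. This hinges entirely on the hypothesis: every ``maximum over covers'' that \(\psi_t\) encounters along the unique chain leading up to \(p\) is a maximum over a singleton — or, at the very bottom, a maximum over marked elements, whose coordinates are frozen to the constants \(\lambda(a)\) — so no genuine case distinction is ever triggered and the formulas stay linear. Passing to the projected maps on \(\R^{\tilde P}\) is what makes this last point transparent; working in \(\R^P\) one would instead restrict everything to the affine slice \(\{x_a=\lambda(a):a\in P^*\}\).
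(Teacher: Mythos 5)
Your proof is correct and takes essentially the same approach as the paper's: show that under the hypothesis on \(p\) the transfer map \(\theta_{t,t'}\) degenerates from piecewise-linear to affine, by noting it fixes all coordinates except the \(p\)-th, and that the \(p\)-th is shifted by a multiple of an affine function of the remaining coordinates obtained by unrolling \(\psi_t\) along the unique chain below \(p\). Your write-up is a bit more careful about separating the \(r=0\) case, about verifying that \(\ell\) does not involve \(y_p\), and about exhibiting the inverse shear explicitly, but the argument and the key observation are identical to the paper's.
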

\begin{figure}
    \centering
    \begin{tikzpicture}[baseline={([yshift=-.5ex]current bounding box.center)},xscale=.6,yscale=1]
        \path (-1.5,0) node[posetelmm] (a1) {} node[below=2pt,elmname] {\(a_1\)};
        \path (-.5,0) node[posetelmm] (a2) {} node[below=2pt,elmname] {\(a_2\)};
        \path (.5,0) node {\(\cdots\)};
        \path (1.5,0) node[posetelmm] (ak) {} node[below=2pt,elmname] {\(a_k\)};
        \path (0,1) node[posetelm] (p1) {} node[left,elmname] {\(p_1\)};
        \path (0,2) node[posetelm] (p2) {} node[left,elmname] {\(p_2\)};
        \path (0,3) node[posetelm] (pr) {} node[left,elmname] {\(p_r\)};
        \path (0,4) node[posetelm] (p) {} node[left,elmname,yshift=-2pt] {\(p\)};
        \draw[covrel]
             (a1) -- (p1) -- (p2) (pr) -- (p)
             (a2) -- (p1)
             (ak) -- (p1);
        \draw[dots] (p2) -- (pr);
        \draw[dots] (p) to[bend left] +(-1,1);
        \draw[dots] (p) to[bend right] +(1,1);
    \end{tikzpicture}
    \caption[Condition on \(p\in P\) in \Cref{prop:irrelevant-p}]{The condition on \(p\in P\) in \Cref{prop:irrelevant-p}.}
    \label{fig:irrelevant-p}
\end{figure}
The condition on \(p\) in \Cref{prop:irrelevant-p} is equivalent the subposet of all elements below \(p\) being of the form depicted in \Cref{fig:irrelevant-p}.

\begin{proof}
    Let \(t,t'\in\smash{[0,1]^{\tilde P}}\) such that \(t_q=t'_q\) for \(q\neq p\) and consider the transfer map \(\theta_{t,t'}\).
    For \(y\in\OO_t(P,\lambda)\) and \(q\neq p\) we have
    \begin{equation*}
        \theta_{t,t'}(y)_q = \varphi_{t'}(\psi_t(y))_q = \psi_t(y)_q - t'_q \max_{q'\prec q} \psi_t(y)_{q'} = \psi_t(y)_q - t_q \max_{q'\prec q} \psi_t(y)_{q'} = y_q.
    \end{equation*}
    For the \(p\)-coordinate note that the given condition means walking down from \(p\) in the Hasse diagram of \((P,\lambda)\) we are forced to walk along \(p \succ p_r \cdots \succ p_1\) and \(p_1\) covers only marked elements.
    Hence, we have
    \begin{align*}
        \theta_{t,t'}(y)_p
        &= \psi_t(y)_p - t'_p \max_{q\prec p} \psi_t(y)_q
        = \psi_t(y)_p - t'_p \psi_t(y)_{p_r} \\
        &= y_p + t_p y_{p_r} + t_p t_{p_r} y_{p_{r-1}} + t_p t_{p_{r-1}} t_{p_r} y_{p_{r-2}} \cdots + t_p t_{p_1} t_{p_2} \cdots t_{p_r} \max_{a\prec p_1} \lambda(a)\\
        &\qquad - t'_p\left(y_{p_r} + t_{p_r} y_{p_{r-1}} + t_{p_{r-1}} t_{p_r} y_{p_{r-2}} \cdots + t_{p_1} t_{p_2} \cdots t_{p_r} \max_{a\prec p_1} \lambda(a)\right) \\
        &= y_p + (t_p - t'_p)\left(y_{p_r} + t_{p_r} y_{p_{r-1}} + t_{p_{r-1}} t_{p_r} y_{p_{r-2}} \cdots + t_{p_1} t_{p_2} \cdots t_{p_r} \max_{a\prec p_1} \lambda(a)\right).
    \end{align*}
    We conclude that \(\theta_{t,t'}\) restricts to an affine isomorphism \(\OO_t(P,\lambda)\isoto\OO_{t'}(P,\lambda)\).
\end{proof}

\begin{corollary}
Let \(k\) be the number of elements in \(\tilde P\) not satisfying the condition in \Cref{prop:irrelevant-p}, there are at most \(3^k\) different combinatorial types of marked poset polyhedra associated to a marked poset \((P,\lambda)\). \qed
\end{corollary}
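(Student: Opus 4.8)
The plan is to combine \Cref{cor:combtype} with \Cref{prop:irrelevant-p}. By \Cref{cor:combtype}, the combinatorial type of $\OO_t(P,\lambda)$ depends only on which face of the parametrizing hypercube $[0,1]^{\tilde P}$ contains $t$ in its relative interior. Such faces correspond bijectively to maps $\tilde P\to\{0,1,\ast\}$: one fixes some coordinates to $0$, some to $1$, and lets the remaining ones range freely over $(0,1)$, and $t$ lies in the relative interior of the face recording, for each $p$, whether $t_p=0$, $t_p=1$, or $t_p\in(0,1)$. There are $3^{\lvert\tilde P\rvert}$ such maps, which already bounds the number of combinatorial types by $3^{\lvert\tilde P\rvert}$. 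The remaining task is to discard the coordinates indexed by those $p\in\tilde P$ satisfying the condition of \Cref{prop:irrelevant-p}.

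Let $S\subseteq\tilde P$ be the set of $p\in\tilde P$ satisfying the condition in \Cref{prop:irrelevant-p}, so that $k=\lvert\tilde P\setminus S\rvert$; note that this condition refers only to the Hasse diagram of $(P,\lambda)$ and not to the parameter $t$. The claim is that if $t,t'\in[0,1]^{\tilde P}$ agree on $\tilde P\setminus S$, then $\OO_t(P,\lambda)$ and $\OO_{t'}(P,\lambda)$ are affinely isomorphic, hence of the same combinatorial type. To prove it, list the elements of $S$ on which $t$ and $t'$ differ as $p_1,\dots,p_m$ and interpolate by parameters $t=t^{(0)},t^{(1)},\dots,t^{(m)}=t'$, where $t^{(i)}$ agrees with $t'$ on $\{p_1,\dots,p_i\}$ and with $t$ elsewhere. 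Consecutive parameters $t^{(i-1)}$ and $t^{(i)}$ differ in the single coordinate $p_i\in S$, so \Cref{prop:irrelevant-p} (whose proof exhibits an explicit affine isomorphism) identifies $\OO_{t^{(i-1)}}(P,\lambda)$ with $\OO_{t^{(i)}}(P,\lambda)$; composing these identifications gives the claim.

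Consequently the combinatorial type of $\OO_t(P,\lambda)$ depends on $t$ only through $t|_{\tilde P\setminus S}$; combined with \Cref{cor:combtype} it depends only on the assignment to each $p\in\tilde P\setminus S$ of one of the three states ``$t_p=0$'', ``$t_p=1$'', ``$t_p\in(0,1)$'' according to the position of $t_p$. As there are $3^{\lvert\tilde P\setminus S\rvert}=3^k$ such assignments, at most $3^k$ combinatorial types occur.

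The argument is routine; the only point requiring care is that \Cref{prop:irrelevant-p} changes a single coordinate at a time, so one must telescope through intermediate parameters rather than modifying all $S$-coordinates simultaneously — which is harmless precisely because the condition defining $S$ is independent of $t$ and so persists along the interpolation.
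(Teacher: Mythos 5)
Your argument is correct and is exactly what the paper's \qed abbreviates: combine \Cref{cor:combtype} (combinatorial type depends only on the face of the hypercube, i.e.\ on the pattern of $t_p\in\{0\}$, $\{1\}$, or $(0,1)$) with \Cref{prop:irrelevant-p} to drop the coordinates in $S$, leaving $3^k$ possibilities. Your care in telescoping one coordinate at a time is a correct reading of \Cref{prop:irrelevant-p}, whose proof changes only a single $t_p$.
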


\section{Continuous Degenerations}
\label{sec:cont-deg}

By \Cref{cor:combtype}, the combinatorial type of \(\OO_t(P,\lambda)\) is constant along the relative interiors of the faces of the hypercube \(\smash{[0,1]^{\tilde P}}\).
Assume we are looking at some \(\OO_t(P,\lambda)\) with \(t_p\in(0,1)\) for a fixed \(p\).
Continuously changing \(t_p\) to \(0\) or \(1\), the combinatorial type of the polyhedron stays constant until it possibly jumps, when reaching \(0\) or \(1\), respectively.
This motivates to think of the two polyhedra for \(t_p=0\) and \(t_p=1\) as continuous degenerations of the polyhedron for any \(t_p\in(0,1)\).

In this section we formally introduce a concept of continuous degenerations of polyhedra to then apply it to marked poset polyhedra.

\subsection{Continuous Degenerations of Polyhedra}

We start by defining continuous deformations of polyhedra, mimicking the situation in the universal family.

\begin{definition} \label{def:cont-deform}
    Given two polyhedra \(Q_0\) and \(Q_1\) in \(\R^n\), a \emph{continuous deformation} from \(Q_0\) to \(Q_1\) consists of the following data:
    \begin{enumerate}
        \item A continuous map \(\rho\colon Q_0\times [0,1]\to\R^n\), such that each \(\rho_t=\rho(-,t)\) is an embedding, \(\rho_0\) is the identical embedding of \(Q_0\) and the image of \(\rho_1\) is \(Q_1\).
        \item Finitely many continuous functions \(f^1,f^2,\dots,f^k\colon \R^n\times [0,1]\to\R\) such that for all \(i\) and \(t\) the maps \(f^i_t = f^i(-,t)\colon \R^n\to\R\) are affine linear forms and satisfy
            \begin{equation*}
                \rho_t(Q_0) = \left\{ \, x\in \R^n \,\middle|\,
                    f^i_t(x) \ge 0 \text{ for all \(i\)}
                \,\right\}.
            \end{equation*}
    \end{enumerate}
    Hence, the images \(\rho_t(Q_0)\) are all polyhedra and we write \(Q_t\) for \(\rho_t(Q_0)\) and say \((Q_t)_{t\in [0,1]}\) is a continuous deformation when the accompanying maps \(\rho\) and \(f^i\) are clear from the context.
\end{definition}

Note that a continuous deformation of polyhedra as defined here consists of both a map moving the points around \emph{and} a continuous description in terms of inequalities for all \(t\in[0,1]\).

\begin{definition} \label{def:cont-deg}
    A continuous deformation \((Q_t)_{t\in [0,1]}\) as in \Cref{def:cont-deform} is called a \emph{continuous degeneration} if for all \(x\in Q_0\), \(t<1\) and \(i=1,\dots,k\) we have \(f^i_t(\rho_t(x)) = 0\) if and only if \(f^i_0(x) = 0\).
\end{definition}

From this definition we immediately obtain the following.

\begin{proposition}
    If \((Q_t)_{t\in [0,1]}\) is a continuous degeneration, the polyhedra \(Q_t\) for \(t<1\) are all combinatorially equivalent and \(\rho_t\) preserves faces and their incidence structure.
\end{proposition}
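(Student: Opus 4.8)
The plan is to extract the two assertions directly from the definition of a continuous degeneration. Fix $t<1$ and recall that $Q_t=\rho_t(Q_0)$ is cut out by the affine forms $f^i_t\ge 0$, while $Q_0$ is cut out by $f^i_0\ge 0$; moreover $\rho_t$ is an embedding. First I would show $\rho_t$ preserves faces. Every face of the polyhedron $Q_0$ is of the form $F_S=\{x\in Q_0 : f^i_0(x)=0\ \text{for}\ i\in S\}$ for a suitable subset $S\subseteq\{1,\dots,k\}$ of the defining forms, and similarly every face of $Q_t$ is $\{y\in Q_t : f^i_t(y)=0\ \text{for}\ i\in S\}$ for some $S$. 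By the degeneration hypothesis, for $x\in Q_0$ we have $f^i_t(\rho_t(x))=0 \iff f^i_0(x)=0$, so $\rho_t$ maps $F_S$ bijectively onto the face of $Q_t$ defined by the same index set $S$; since $\rho_t$ is a continuous injection and these faces are the exposed faces, this gives a bijection between the faces of $Q_0$ and those of $Q_t$ that is clearly inclusion-preserving (if $S\subseteq S'$ then $F_{S'}\subseteq F_S$ on both sides), hence an isomorphism of face lattices intertwined by $\rho_t$.

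Next I would deduce combinatorial equivalence of the $Q_t$ for $t<1$. Composing the face-lattice isomorphisms $\FF(Q_0)\cong\FF(Q_t)$ induced by $\rho_t$ and $\rho_{t'}^{-1}$, we get $\FF(Q_t)\cong\FF(Q_{t'})$ for any $t,t'<1$; so all the $Q_t$ with $t<1$ share the same face lattice, i.e.\ are combinatorially equivalent. The only genuinely delicate point is the claim that each face of a polyhedron is the zero set of a subset of \emph{any} chosen set of defining inequalities — this is standard (see \cite{Zie95}), but one should be slightly careful that the $f^i_t$ form a valid (possibly redundant) inequality description of $Q_t$ for every $t$, which is exactly part~(2) of \Cref{def:cont-deform}. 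Redundant inequalities cause no trouble: a redundant $f^i_0$ (one with $f^i_0>0$ on the whole relative interior, or identically nonnegative in a way not meeting the boundary in a facet) stays redundant for $Q_t$ by the degeneration condition, since $f^i_t(\rho_t(x))=0$ would force $f^i_0(x)=0$.

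I expect the main obstacle to be purely expository: making precise the passage ``faces are zero sets of subfamilies of the defining forms'' uniformly in $t$, and checking it is compatible with the bijection $\rho_t$. Concretely, one must argue that if $G$ is a nonempty face of $Q_t$, then $G=\{y\in Q_t : f^i_t(y)=0,\ i\in S_G\}$ where $S_G=\{i : f^i_t \equiv 0 \text{ on } G\}$, and that $\rho_t^{-1}(G)=\{x\in Q_0 : f^i_0(x)=0,\ i\in S_G\}$ is the corresponding face of $Q_0$; both follow from the equivalence $f^i_t(\rho_t(x))=0\iff f^i_0(x)=0$ together with the fact that $\rho_t$ is a bijection $Q_0\to Q_t$. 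Since this is immediate from the definitions, the proof is short; I would write it in two or three sentences.

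\begin{proof}
    Fix $t<1$. By \Cref{def:cont-deform}\,(2), the affine forms $f^1_t,\dots,f^k_t$ give an inequality description $Q_t = \{\,y\in\R^n : f^i_t(y)\ge 0 \text{ for all } i\,\}$, and likewise $f^1_0,\dots,f^k_0$ describe $Q_0$.
    Every nonempty face of a polyhedron given by such a (possibly redundant) list of inequalities is obtained by turning some subset of these inequalities into equalities; more precisely, a nonempty $G\subseteq Q_t$ is a face if and only if $G = \{\,y\in Q_t : f^i_t(y)=0 \text{ for } i\in S\,\}$ for some $S\subseteq\{1,\dots,k\}$, and then $G$ equals this set for $S = S_G \coloneqq \{\,i : f^i_t|_G\equiv 0\,\}$ (cf.\ \cite{Zie95}).
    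Now let $\rho_t\colon Q_0\to Q_t$ be the embedding from \Cref{def:cont-deform}\,(1), which is a continuous bijection onto $Q_t$.
    The defining property of a continuous degeneration (\Cref{def:cont-deg}) states that for all $x\in Q_0$ and all $i$ we have $f^i_t(\rho_t(x))=0$ if and only if $f^i_0(x)=0$.
    Hence for any $S\subseteq\{1,\dots,k\}$,
    \begin{equation*}
        \rho_t\bigl(\{\,x\in Q_0 : f^i_0(x)=0 \text{ for } i\in S\,\}\bigr)
        = \{\,y\in Q_t : f^i_t(y)=0 \text{ for } i\in S\,\}.
    \end{equation*}
    By the characterization of faces recalled above (applied to both $Q_0$ and $Q_t$), this shows that $\rho_t$ maps the face $F_S \coloneqq \{\,x\in Q_0 : f^i_0(x)=0,\ i\in S\,\}$ of $Q_0$ bijectively onto the face of $Q_t$ cut out by the same index set $S$, and that every face of $Q_0$ and every face of $Q_t$ arises this way.
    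This correspondence is inclusion-preserving in both directions: if $S\subseteq S'$ then $F_{S'}\subseteq F_S$, and conversely if $F_{S_1}\subseteq F_{S_2}$ as faces of $Q_0$ then, applying $\rho_t$, the corresponding faces of $Q_t$ are nested the same way.
    Therefore $\rho_t$ induces an isomorphism of face lattices $\FF(Q_0)\isoto\FF(Q_t)$ preserving incidences.

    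Finally, for $t,t'<1$ the composite $\rho_{t'}\circ\rho_t^{-1}\colon Q_t\to Q_{t'}$ carries faces to faces and preserves incidences, since it does so after identifying both with $Q_0$ via the isomorphisms just constructed.
    Consequently all $Q_t$ with $t<1$ have isomorphic face lattices, i.e.\ are combinatorially equivalent.
\end{proof}
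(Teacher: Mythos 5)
Your proof is correct and follows essentially the same route as the paper's: both use the degeneration condition to show that the set of tight inequalities (the paper's $\FI_t$, your $S_G$) is preserved by $\rho_t$ for $t<1$, and then invoke the standard fact that faces of a polyhedron correspond to incidence sets of a defining inequality system. Your version is slightly more explicit in spelling out the face characterization and in composing $\rho_{t'}\circ\rho_t^{-1}$ to get combinatorial equivalence, but the underlying idea is identical.
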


\begin{proof}
    Let the data of the continuous degeneration be given as in \Cref{def:cont-deform}.
    For \(y\in Q_t\) denote by \(\FI_t(y)\) the set of all \(i\in[k]\) such that \(f^i_t(y) = 0\).
    The set of all \(\FI_t(y)\) for \(y\in Q_t\) ordered by reverse inclusion is isomorphic to \(\FF(Q_t)\setminus\{\varnothing\}\) since relative interiors of faces of \(Q_t\) correspond to regions of constant \(\FI_t\).

    Since for all \(x\in Q_0\), \(t<1\) and \(i=1,\dots,k\) we have \(f^i_t(\rho_t(x))=0\) if and only if \(f^i_0(x)=0\), the sets \(\FI_t(\rho_t(x))\) are fixed for \(t<1\) and hence \(\rho_t\) preserves the face structure.
\end{proof}

We continue by illustrating the definition of continuous degenerations in an example before proceeding with the general theory.

\begin{example} \label{ex:contdeg}
    For \(t\in[0,1]\) let \(Q_t\subseteq\R^2\) be the polytope defined by the inequalities \(0\le x_1 \le 2\), \(0\le x_2\) as well as
    \begin{align*}
        x_2 &\le (1-t)x_1 + 1,\quad\text{and} \\
        x_2 &\le (1-t)(2-x_1) + 1.
    \end{align*}
    \begin{figure}
        \centering
        \subcaptionbox[]{\(Q_0\)\label{subfig:contdeg-a}}[0.32\textwidth][c]{\centering
        \begin{tikzpicture}[scale=1.2,baseline={([yshift=-.5ex]current bounding box.center)}]
            \draw (-.5,0) -- (2.4,0) node [right] {\(x_1\)};
            \draw (0,-.5) -- (0,2.4) node [above] {\(x_2\)};
            \fill[black!6,draw=black,thick] (0,0) -- (2,0) -- (2,1) -- (1,2) -- (0,1) -- cycle;
            \draw[thick,blue,dashed] (1,0) -- (1,2.4);
            \draw (1,0) node[below] {\(1\)};
            \draw (2,0) node[below] {\(2\)};
            \draw (0,1) node[left] {\(1\)};
            \draw (0,2) node[left] {\(2\)};
        \end{tikzpicture}}
        \hfill
        \subcaptionbox[]{\(Q_{\frac 1 2}\)\label{subfig:contdeg-b}}[0.32\textwidth][c]{\centering
        \begin{tikzpicture}[scale=1.2,baseline={([yshift=-.5ex]current bounding box.center)}]
            \draw (-.5,0) -- (2.4,0) node [right] {\(x_1\)};
            \draw (0,-.5) -- (0,2.4) node [above] {\(x_2\)};
            \fill[black!6,draw=black,thick] (0,0) -- (2,0) -- (2,1) -- (1,1.5) -- (0,1) -- cycle;
            \draw[thick,blue,dashed] (1,0) -- (1,2.4);
            \draw (1,0) node[below] {\(1\)};
            \draw (2,0) node[below] {\(2\)};
            \draw (0,1) node[left] {\(1\)};
            \draw (0,2) node[left] {\(2\)};
        \end{tikzpicture}}
        \hfill
        \subcaptionbox[]{\(Q_1\)\label{subfig:contdeg-c}}[0.32\textwidth][c]{\centering
        \begin{tikzpicture}[scale=1.2,baseline={([yshift=-.5ex]current bounding box.center)}]
            \draw (-.5,0) -- (2.4,0) node [right] {\(x_1\)};
            \draw (0,-.5) -- (0,2.4) node [above] {\(x_2\)};
            \fill[black!6,draw=black,thick] (0,0) -- (2,0) -- (2,1) -- (1,1) -- (0,1) -- cycle;
            \draw[thick,blue,dashed] (1,0) -- (1,2.4);
            \draw (1,0) node[below] {\(1\)};
            \draw (2,0) node[below] {\(2\)};
            \draw (0,1) node[left] {\(1\)};
            \draw (0,2) node[left] {\(2\)};
        \end{tikzpicture}}
        \caption[Continuous degeneration from \Cref{ex:contdeg}]{The polytopes in the continuous degeneration from \Cref{ex:contdeg} for \(t=0\), \(t=\tfrac 1 2\) and \(t=1\).}
        \label{fig:contdeg}
    \end{figure}
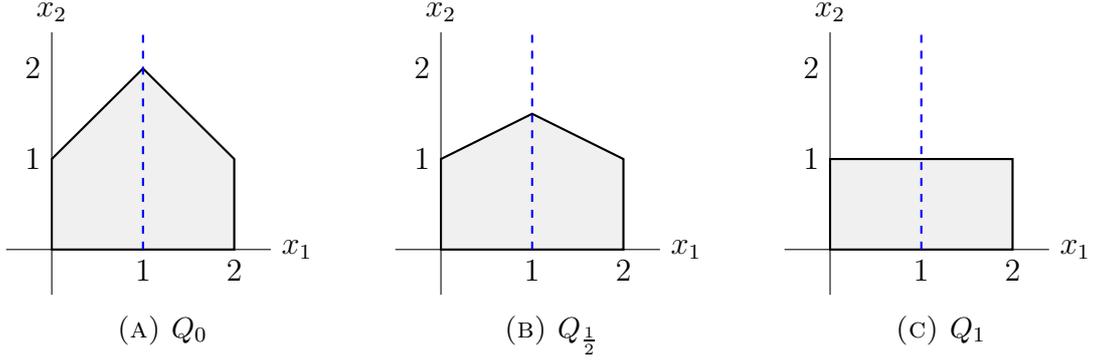

    For \(t=0\), \(t=\tfrac 1 2\) and \(t=1\) we have illustrated the polytope in \Cref{fig:contdeg}.
    Together with the map \(\rho_t\colon Q_0\to \R^2\) given by \(\rho_t(x)_1=x_1\) for all \(t\) and
    \begin{equation*}
        \rho_t(x)_2 =
        \begin{cases}
            x_2 \, \frac{(1-t)x_1 + 1}{x_1+1} & \text{for \(x_1 \le 1\),} \\
            x_2 \, \frac{(1-t)(2-x_1) + 1}{(2-x_1)+1} & \text{for \(x_1 \ge 1\)} \\
        \end{cases}
    \end{equation*}
    we obtain a continuous degeneration.
    Starting from the pentagon in \Cref{subfig:contdeg-a} at \(t=0\) we see increasingly compressed pentagons with the two top edges becoming more flat-angled until ending up with the rectangle in \Cref{subfig:contdeg-c} at \(t=1\).
    The map \(\rho_t\) just scales the \(x_2\) coordinates accordingly, preserving the face structure for \(t<1\).
\end{example}

The key result on continuous degenerations that will allow conclusions on the face structure of degenerations is that during a continuous degeneration, relative interiors of faces always map into relative interiors of faces.
In other words, continuous degenerations can not ``fold'' faces of \(Q_0\) so they split into different faces of \(Q_1\), but only ``straighten'' some adjacent faces of \(Q_0\) to become one face of \(Q_1\).

\begin{proposition} \label{prop:degeneration}
    Let \((Q_t)_{t\in[0,1]}\) be a continuous degeneration of polyhedra.
    Whenever \(F\) is a face of \(Q_0\), there is a unique face \(G\) of \(Q_1\) such that
    \begin{equation*}
        \rho_1(\relint F) \subseteq \relint G.
    \end{equation*}
\end{proposition}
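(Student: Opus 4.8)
The plan is to follow the face $F$ through the deformation and exploit that the family consists of genuine polyhedra, not merely homeomorphic copies: this forces the limiting set $\rho_1(F)$ to be \emph{convex}, after which invariance of domain finishes the job.

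First I would record the situation for $t<1$. By the previous proposition (for a continuous degeneration, $\rho_t$ with $t<1$ preserves the face structure), each $\rho_t$ is a homeomorphism $Q_0\isoto Q_t$. Concretely, let $S$ be the set of indices $i$ with $f^i_0\equiv 0$ on $F$, so that $\relint F=\{x\in Q_0: f^i_0(x)=0\iff i\in S\}$; then $\rho_t$ carries $\relint F$ bijectively onto $\{y\in Q_t: f^i_t(y)=0\iff i\in S\}$, which is the relative interior of a face $F_t$ of $Q_t$, and passing to closures (legitimate since $\rho_t$ is a homeomorphism) gives $\rho_t(F)=F_t$. In particular $F_t$ is an actual convex polytope of dimension $d:=\dim F$, and $\rho_t(\relint F)=\relint F_t$.

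Next I would let $t\to 1$. Since $\rho$ is continuous on $Q_0\times[0,1]$ and $F$ is compact (this is the polytope case; for general polyhedra one argues locally, keeping track of recession cones), the restrictions $\rho_t|_F$ converge uniformly to $\rho_1|_F$, hence $F_t\to\rho_1(F)$ in the Hausdorff metric. A Hausdorff limit of convex sets is convex, so $\rho_1(F)$ is a compact convex set; being homeomorphic to $F$ via $\rho_1$, it has dimension $d$. Let $G$ be the smallest face of $Q_1$ containing the convex set $\rho_1(F)$; by the standard fact that $\relint C$ lies in the relative interior of the smallest face of a polyhedron containing a convex subset $C$, we obtain $\relint(\rho_1(F))\subseteq\relint G$.

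Finally I would deduce $\rho_1(\relint F)\subseteq\relint(\rho_1(F))$: the set $\relint F$ is open in the $d$-dimensional affine space $\aff F$, the map $\rho_1|_{\relint F}$ is a continuous injection into $\aff(\rho_1(F))$, and $\dim\aff(\rho_1(F))=d$, so invariance of domain shows $\rho_1(\relint F)$ is open in $\aff(\rho_1(F))$ and hence, lying inside $\rho_1(F)$, is contained in $\relint(\rho_1(F))$. Combined with the previous step this gives $\rho_1(\relint F)\subseteq\relint G$; uniqueness of $G$ is then automatic, since the relative interiors of distinct faces of $Q_1$ are disjoint and $\relint F\neq\varnothing$. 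The only step I expect to require genuine care is the convexity of $\rho_1(F)$: identifying it as the Hausdorff limit of the honest polytope faces $F_t$ and, in the unbounded case, making that limiting argument precise. The polyhedral (rather than merely topological) nature of the deformation is precisely what enters here — without it $\rho_1(F)$ could be a curved disc touching the boundary of a face, and the statement would fail.
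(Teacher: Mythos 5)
Your argument is correct (with the caveat you yourself flag for unbounded polyhedra) but takes a genuinely different route from the paper. Where you establish the \emph{convexity} of \(\rho_1(F)\) via Hausdorff limits of the polyhedral faces \(F_t=\rho_t(F)\) and then invoke the standard ``smallest face containing a convex subset'' lemma, the paper never proves convexity at all. Instead it recasts the claim in terms of the incidence sets \(\FI_t(y)=\{i: f^i_t(y)=0\}\) and argues directly by contradiction: if \(x,x'\in\relint F\) had \(f^j_1(\rho_1(x))=0\) but \(f^j_1(\rho_1(x'))>0\), one bounds \(\dim\aff\rho_1(\relint F)\le d\) by the fact that affinely dependent tuples stay affinely dependent in the limit \(t\to 1\), deduces via invariance of domain that \(\rho_1(\relint F)\) is open in its \(d\)-dimensional affine hull, and then perturbs \(\rho_1(x)\) slightly in the direction away from \(\rho_1(x')\) to find a point \(z\in Q_1\) with \(f^j_1(z)<0\), a contradiction. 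Both arguments hinge on the same technical core --- that \(\rho_1(\relint F)\) has a \(d\)-dimensional affine hull and is open therein --- but the paper's subsequent step is a clean affine-linear estimate on a single functional \(f^j_1\), while yours goes through the global geometry of \(\rho_1(F)\). The paper's route is self-contained and handles the unbounded case uniformly; your route is more geometric and conceptual but at the price of Hausdorff-metric machinery that is awkward to set up for unbounded sets. Two minor points worth tightening in your write-up: (i) you need the affine hull of \(\rho_1(\relint F)\) (not just of \(\rho_1(F)\)) to have dimension exactly \(d\) before invoking invariance of domain, which follows since \(\rho_1(\relint F)\) is dense in \(\rho_1(F)\) and has topological dimension \(d\); and (ii) the unbounded case really would need a separate argument, e.g.\ intersecting with large boxes, or just switching to the paper's local argument, which requires no compactness.
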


\begin{proof}
    As in the previous proof, let \(\FI_t(y)\) denote the set of indices \(i\in[k]\) such that \(f^i_t(y)=0\).
    Using these incidence sets we may rephrase the proposition as follows: whenever \(x,x'\in Q_0\) satisfy \(\FI_0(x)=\FI_0(x')\), they also satisfy \(\FI_1(\rho_1(x)) = \FI_1(\rho_1(x'))\).
    
    Let \(F\) be the face of \(Q_0\) having both \(x\) and \(x'\) in its relative interior and assume there exists a \(j\in \FI_1(\rho_1(x)) \setminus \FI_1(\rho_1(x'))\) for sake of contradiction.
    Hence, we have \(f^j_1(\rho_1(x))=0\) while \(f^j_1(\rho_1(x'))>0\).
    Let \(d\) denote the dimension of \(F\) then \(\relint F\) is a (topological) manifold of dimension \(d\).
    Since \(\rho_1\) is an embedding, \(\rho_1(\relint F)\) is a manifold of dimension \(d\) as well.
    Since the affine hull of \(\rho_t(\relint F)\) is of dimension \(d\) for all \(t<1\), we conclude that the affine hull of \(\rho_1(\relint F)\) is of dimension at most \(d\).
    To see this, take any \(d+1\) points \(y_0,\dots,y_d\) in \(\rho_1(\relint F)\). Their images \(\rho_t(\rho_1^{-1}(y_0)),\dots,\rho_t(\rho_1^{-1}(y_d))\) in \(\rho_t(\relint F)\) are affinely dependent for \(t<1\), so they have to be affinely dependent for \(t=1\) as well by the continuity of \(\rho\) in \(t\).

    But as \(\rho_1(\relint F)\) is a manifold of dimension \(d\), we conclude that its affine hull has dimension exactly \(d\) and \(\rho_1(\relint F)\) is an open subset of its affine hull.
    Given that both \(\rho_1(x)\) and \(\rho_2(x')\) are points in \(\rho_1(\relint F)\), we conclude that there exists an \(\varepsilon>0\) such that the point
    \begin{equation*}
        z = \rho_1(x) + \varepsilon(\rho_1(x) - \rho_1(x'))
    \end{equation*}
    is still contained in \(\rho_1(\relint F)\).
    In particular, \(z\in Q_1\).
    However, since \(f^j_1\) is an affine linear form, we have
    \begin{equation*}
        f^j_1(z) = (1+\varepsilon) f^j_1(\rho_1(x)) - \varepsilon f^j_1(\rho_1(x')) < 0.
    \end{equation*}
    This contradicts \(z\in Q_1\), which finishes the proof.
\end{proof}

The consequence of \Cref{prop:degeneration} is that continuous degenerations induce maps between face lattices.

\begin{corollary} \label{cor:degeneration}
    When \((Q_t)_{t\in[0,1]}\) is a continuous degeneration of polyhedra, we have a surjective order-preserving map of face lattices
    \begin{equation*}
        \dg\colon \FF(Q_0) \longrightarrow \FF(Q_1)
    \end{equation*}
    determined by the property
    \begin{equation*}
        \rho_1(\relint F) \subseteq \relint \dg(F).
    \end{equation*}
    for non-empty \(F\) and \(\dg(\varnothing)=\varnothing\).
    Furthermore, the map satisfies \(\dim(\dg(F))\ge \dim F\) for all \(F\in\FF(Q_0)\). \qed
\end{corollary}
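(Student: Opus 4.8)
The plan is to construct $\dg$ directly from \Cref{prop:degeneration} and then verify the asserted properties by tracking relative interiors of faces. Two standard facts about a polyhedron $Q$ will be used repeatedly: first, the relative interiors of the faces of $Q$ partition $Q$, so every point of $Q$ lies in the relative interior of a \emph{unique} face; second, if $G$ and $H$ are faces of $Q$ with $\relint G\cap H\ne\varnothing$, then $G\subseteq H$ (choose a supporting affine form cutting out $H$; it attains its maximum over $G$ at a relative interior point of $G$, hence is constant on $G$, so $G\subseteq H$).

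For a non-empty face $F\in\FF(Q_0)$ I would \emph{define} $\dg(F)$ to be the unique face $G$ of $Q_1$ with $\rho_1(\relint F)\subseteq\relint G$ provided by \Cref{prop:degeneration}, and set $\dg(\varnothing)=\varnothing$. Uniqueness in \Cref{prop:degeneration} makes $\dg$ well-defined and shows that it is the only map with the stated characterizing property. For surjectivity, given a face $G$ of $Q_1$ (the case $G=\varnothing$ being trivial) pick $y\in\relint G$ and let $x=\rho_1^{-1}(y)\in Q_0$, which is legitimate since $\rho_1$ is an embedding with image $Q_1$; letting $F$ be the unique face of $Q_0$ with $x\in\relint F$, we get $y=\rho_1(x)\in\rho_1(\relint F)\subseteq\relint\dg(F)$, and as $y$ also lies in $\relint G$, uniqueness of the face whose relative interior contains $y$ forces $\dg(F)=G$.

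For order-preservation, let $F\subseteq F'$ be non-empty faces of $Q_0$ and pick $x\in\relint F$. On one hand $\rho_1(x)\in\relint\dg(F)$. On the other hand, since $F'$ is a non-empty closed convex set we may write $F'=\overline{\relint F'}$, so continuity of $\rho_1$ gives $\rho_1(F')\subseteq\overline{\rho_1(\relint F')}\subseteq\overline{\relint\dg(F')}=\dg(F')$; hence $\rho_1(x)\in\rho_1(F)\subseteq\rho_1(F')\subseteq\dg(F')$. Thus $\relint\dg(F)$ meets the face $\dg(F')$, and the containment lemma yields $\dg(F)\subseteq\dg(F')$. The empty face is handled trivially, so $\dg$ is order-preserving, and the defining inclusion $\rho_1(\relint F)\subseteq\relint\dg(F)$ holds by construction.

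Finally, for the dimension bound, fix a non-empty face $F$ with $d=\dim F$. As already shown inside the proof of \Cref{prop:degeneration}, $\rho_1(\relint F)$ is a $d$-manifold whose affine hull has dimension exactly $d$: images under $\rho_t$ of any $d+1$ points of $\rho_1(\relint F)$ are affinely dependent for $t<1$, hence at $t=1$ by continuity, so $\dim\aff\rho_1(\relint F)\le d$, while it is at least $d$ because $\rho_1(\relint F)$ is a $d$-manifold. Since $\rho_1(\relint F)\subseteq\relint\dg(F)\subseteq\dg(F)$, taking affine hulls gives $d=\dim\aff\rho_1(\relint F)\le\dim\aff\dg(F)=\dim\dg(F)$. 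The step I expect to require the most care is order-preservation, namely the combination of $\rho_1(F')\subseteq\dg(F')$ with the face-containment lemma; the remaining parts are essentially a restatement of \Cref{prop:degeneration}.
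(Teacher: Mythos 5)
Your proof is correct. The paper states this corollary with a terminal \(\qed\), leaving it as an immediate consequence of \Cref{prop:degeneration}; your write-up is a faithful and careful expansion of exactly that intended argument --- defining \(\dg\) via the unique face of \Cref{prop:degeneration}, then using the partition of a polyhedron by relative interiors of faces for surjectivity, the closure identity \(F'=\overline{\relint F'}\) together with continuity of \(\rho_1\) and the face-containment lemma for order-preservation, and the affine-hull dimension computation already established inside the proof of \Cref{prop:degeneration} for the dimension bound.
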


We will refer to the map in \Cref{cor:degeneration} as the \emph{degeneration map}.
Before coming back to marked poset polyhedra, we finish with a result on the $f$-vectors of continuous degenerations.

\begin{proposition} \label{prop:degenerate-f-vector}
    Let \((Q_t)_{t\in[0,1]}\) be a continuous degeneration of polyhedra.
    We have \(f_i(Q_1) \le f_i(Q_0)\) for all \(i\).
\end{proposition}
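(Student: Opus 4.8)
The plan is to deduce the inequality on $f$-vectors directly from the degeneration map $\dg\colon\FF(Q_0)\to\FF(Q_1)$ provided by \Cref{cor:degeneration}, by showing that $\dg$ restricts to a surjection from the $i$-faces of $Q_0$ onto the $i$-faces of $Q_1$ for every $i$. Since $\dg$ is already known to be surjective and order-preserving with $\dim(\dg(F))\ge\dim F$, the only thing left to check is that every $i$-dimensional face $G$ of $Q_1$ has at least one $i$-dimensional preimage; this immediately gives $f_i(Q_1)\le f_i(Q_0)$ because distinct $i$-faces of $Q_1$ then have disjoint nonempty sets of $i$-dimensional preimages.

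\textbf{Key steps.} First I would fix an $i$-face $G$ of $Q_1$ and pick a point $y\in\relint G$, together with its unique preimage $x=\rho_1^{-1}(y)\in Q_0$; let $F$ be the face of $Q_0$ with $x\in\relint F$. By the defining property of $\dg$ we have $\rho_1(\relint F)\subseteq\relint G$, so $\dg(F)=G$ and hence $\dim F\le\dim G=i$. The crux is the reverse inequality $\dim F\ge i$. For this I would run the affine-dimension argument from the proof of \Cref{prop:degeneration} in the other direction: $\rho_1(\relint F)$ is an open subset of $\relint G$ (being a $d$-dimensional manifold, $d=\dim F$, sitting inside the $i$-manifold $\relint G$ with $d\le i$), and I claim $d=i$. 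Indeed, if $d<i$, then $\rho_1(\relint F)$, having affine hull of dimension $d<i$, cannot be open in $\relint G$, whose affine hull has dimension $i$ — contradiction. Therefore $\dim F=i$, so $F$ is an $i$-face of $Q_0$ mapping to $G$.

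\textbf{Finishing.} With the claim established, assign to each $i$-face $G$ of $Q_1$ a chosen $i$-face $\sigma(G)$ of $Q_0$ with $\dg(\sigma(G))=G$. Since $\dg$ is a well-defined map, $\dg(\sigma(G))=G\ne G'=\dg(\sigma(G'))$ forces $\sigma(G)\ne\sigma(G')$, so $\sigma$ is injective from the set of $i$-faces of $Q_1$ into the set of $i$-faces of $Q_0$; counting gives $f_i(Q_1)\le f_i(Q_0)$.

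\textbf{Main obstacle.} The only genuinely delicate point is the openness claim: that $\rho_1(\relint F)$ is open in $\relint G$. The cleanest route is the one already used in \Cref{prop:degeneration} — $\rho_1(\relint F)$ is a topological $d$-manifold by invariance of domain (as the continuous injective image of the $d$-manifold $\relint F$), its affine hull has dimension $\le d$ by the continuity-in-$t$ limiting argument applied to affinely dependent tuples, hence exactly $d$, and a $d$-manifold whose affine hull is $d$-dimensional is open in that affine hull. Combined with $\rho_1(\relint F)\subseteq\relint G$ and $\dim\relint G=i$, openness in $\relint G$ forces $d=i$. One should be slightly careful that $\relint G$ is itself an open subset of $\aff G$, so ``open in $\aff\rho_1(\relint F)$'' plus the containment in the same affine hull really does yield ``open in $\relint G$'' once the dimensions match; all of this is routine point-set topology once the dimension count is in place.
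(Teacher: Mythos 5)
Your reduction of the proposition to the claim ``every $i$-face $G$ of $Q_1$ has at least one $i$-dimensional $\dg$-preimage'' is correct, and the last paragraph (injectivity of a section $\sigma$ of $\dg$) is fine. The gap is in how you try to produce that preimage. You pick an \emph{arbitrary} $y\in\relint G$, let $F$ be the face of $Q_0$ with $\rho_1^{-1}(y)\in\relint F$, and assert $\dim F=i$. That is false in general. In \Cref{ex:contdeg}, take $G$ to be the top edge of the rectangle $Q_1$ and $y=(1,1)\in\relint G$; then $\rho_1^{-1}(y)=(1,2)$ is the apex of the pentagon $Q_0$, so $F$ is a vertex and $\dim F=0<1=i$. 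The intended ``contradiction'' never gets off the ground because you never actually establish that $\rho_1(\relint F)$ is open in $\relint G$. The argument recycled from \Cref{prop:degeneration} only shows $\rho_1(\relint F)$ is open in \emph{its own} $d$-dimensional affine hull; openness in $\relint G$ would require $d=i$, which is exactly what you are trying to prove, so the reasoning is circular. (Your own parenthetical ``once the dimensions match'' is the tell.)

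What the paper does instead is not select a single point but use the whole partition: since a polyhedron is the disjoint union of the relative interiors of its faces and $\rho_1$ is a bijection onto $Q_1$, one gets
\begin{equation*}
\relint G \;=\; \bigsqcup_{F\in\dg^{-1}(G)}\rho_1(\relint F),
\end{equation*}
a decomposition of the $i$-manifold $\relint G$ into \emph{finitely many} manifolds $\rho_1(\relint F)$ of dimensions $\dim F\le i$. A finite union of manifolds of dimension strictly less than $i$ cannot cover an $i$-manifold, so some $F\in\dg^{-1}(G)$ has $\dim F=i$. Your approach can be repaired along these lines: instead of fixing one $y$, observe that the preimage faces with $\dim F<i$ contribute only a nowhere-dense subset of $\relint G$, so there exists some $y\in\relint G$ whose preimage lies in the relative interior of an $i$-face. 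But as written, the step ``$\dim F\ge i$'' does not hold for the $F$ you chose.
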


\begin{proof}
    Let \(G\) be an \(i\)-dimensional face of \(Q_1\).
    We claim that there is at least one \(i\)-dimensional face \(F\) of \(Q_0\) such that \(\dg(F)=G\).
    Since every polyhedron is the disjoint union of the relative interiors of its faces and \(\rho_1\) is a bijection, we have
    \begin{equation*}
        \relint G = \bigsqcup_{F\in\dg^{-1}(G)} \rho_1(\relint F).
    \end{equation*}
    Since \(\relint G\) is a manifold of dimension \(\dim G\) and each \(\rho_1(\relint F)\) is a manifold of dimension \(\dim F \le \dim G\), there has to be at least one \(F\in\dg^{-1}(G)\) of the same dimension as \(G\).
\end{proof}

\subsection{Continuous Degenerations in the Universal Family}

We are now ready to apply the concept of continuous degenerations to the universal family of marked poset polyhedra.
Let us first identify for which pairs of parameters \(u,u'\in\smash{[0,1]^{\tilde P}}\) we expect to have a continuous degeneration from \(\OO_u(P,\lambda)\) to \(\OO_{u'}(P,\lambda)\) and then specify the deformation precisely.

\begin{definition}
    Let \(u\in\smash{[0,1]^{\tilde P}}\) and let \(I\subseteq\tilde P\) be the set of indices \(p\), such that \(u_p\in\{0,1\}\).
    Any \(u'\in\smash{[0,1]^{\tilde P}}\) such that \(u'_p=u_p\) for \(p\in I\)  is called a \emph{degeneration} of \(u\).
\end{definition}

\begin{proposition} \label{prop:mpp-degeneration}
    Let \(u'\) be a degeneration of \(u\). The map
    \begin{align*}
        \rho \colon \OO_u(P,\lambda) \times [0,1] &\longrightarrow \R^P, \\
        (x, \xi) &\longmapsto \theta_{u,\xi u'+(1-\xi)u}(x)
    \end{align*}
    is a continuous degeneration with the accompanying affine linear forms given by the equations and inequalities in \Cref{def:mpp} for \(t=\xi u'+(1-\xi)u\).
\end{proposition}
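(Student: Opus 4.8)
The goal is to verify the two bullet points of \Cref{def:cont-deform} and then the extra condition in \Cref{def:cont-deg}. Write $t(\xi) = \xi u' + (1-\xi)u$, so $t(0)=u$, $t(1)=u'$, and each $t(\xi)\in[0,1]^{\tilde P}$ since the hypercube is convex. The candidate deformation is $\rho_\xi = \theta_{u,t(\xi)} = \varphi_{t(\xi)}\circ\psi_u$. First I would record that $\psi_u$ is a fixed piecewise-linear homeomorphism $\OO_u(P,\lambda)\to\OO(P,\lambda)$ (by \Cref{thm:univ-transfer}), so it suffices to analyze $\varphi_{t(\xi)}$ on $\OO(P,\lambda)$ and then precompose. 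By \Cref{thm:univ-transfer} each $\varphi_{t(\xi)}$ is a bijection $\OO(P,\lambda)\to\OO_{t(\xi)}(P,\lambda)$, hence each $\rho_\xi$ is an embedding with $\rho_0=\id_{\OO_u(P,\lambda)}$ and $\rho_1$ having image $\OO_{u'}(P,\lambda)$; this gives condition (1) once continuity of $\rho$ in both arguments is checked. Continuity is routine: $\varphi_t(x)_p$ is built from the entries of $x$ and of $t$ by the operations of multiplication, subtraction, and $\max$ over covers, all of which are jointly continuous, and $t(\xi)$ depends continuously (indeed affinely) on $\xi$; composing with the continuous map $\psi_u$ preserves this.

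For condition (2), the affine forms $f^i_\xi$ are exactly the left-minus-right-hand sides of the equalities $x_a=\lambda(a)$ and the inequalities \eqref{eq:chain} from \Cref{def:mpp} with parameter $t(\xi)$. For fixed $\xi$, the coefficients $(1-t_p)\,t_{p_1}\cdots t_{p_j}$ appearing in \eqref{eq:chain} are fixed real numbers, so each $f^i_\xi$ really is an affine linear form on $\R^P$, and by \Cref{def:mpp} the common zero/nonnegativity locus of the $f^i_\xi$ is precisely $\OO_{t(\xi)}(P,\lambda) = \rho_\xi(\OO_u(P,\lambda))$. Jointly in $(x,\xi)$ these forms are continuous since the coefficients are polynomial in the entries of $t(\xi)$, which are affine in $\xi$. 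One bookkeeping point to state carefully: the indexing family of chains is the same for all $\xi$ (it depends only on $P$), so we genuinely have one finite family $f^1,\dots,f^k$ with $f^i(-,\xi)=f^i_\xi$, as required.

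The real content is the degeneration condition of \Cref{def:cont-deg}: for $x\in\OO_u(P,\lambda)$, $\xi<1$, and each defining form, $f^i_\xi(\rho_\xi(x))=0$ iff $f^i_0(x)=0$. Since $\rho_\xi(x) = \varphi_{t(\xi)}(\psi_u(x))$ and $\psi_u(x)\in\OO(P,\lambda)$, this is exactly \Cref{prop:ineq-pullback} applied to the point $\bar x := \psi_u(x)$: the chain inequality \eqref{eq:chain} at parameter $t(\xi)$ is tight at $\varphi_{t(\xi)}(\bar x)$ iff either $t(\xi)_p=1$ and $\bar x_p=\max_{q\prec p}\bar x_q$, or $t(\xi)_p<1$ and $\bar x_p=\bar x_{p_r}$ together with the $\dashv_{\bar x}$-chain condition, where the relevant index $k$ is governed only by which $t(\xi)_{p_i}$ are $0$, which are $1$, and which are strictly between. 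By definition of a degeneration, $u$ and $u'$ agree on the set $I$ where $u$ is $0$ or $1$; hence for $\xi<1$ the sign pattern (which coordinates of $t(\xi)$ equal $0$, equal $1$, or lie in $(0,1)$) is \emph{identical} to that of $u=t(0)$, because on $I$ the value is fixed and off $I$ we have $u_p\in(0,1)$ and $t(\xi)_p = \xi u'_p+(1-\xi)u_p \in (0,1)$ for $\xi<1$. Therefore the characterization in \Cref{prop:ineq-pullback} returns the same truth value at $t(\xi)$ as at $u$, which is precisely $f^i_\xi(\rho_\xi(x))=0 \iff f^i_0(x)=0$. The analogous (trivial) statement for the equality forms $x_a-\lambda(a)$ holds since these are always satisfied with equality on every $\OO_t(P,\lambda)$.

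**Expected main obstacle.** The only subtle point is making the bookkeeping of \Cref{prop:ineq-pullback} line up verbatim — in particular, confirming that for $\xi<1$ no coordinate of $t(\xi)$ accidentally hits $0$ or $1$ unless it was forced to by membership in $I$, and confirming that the "smallest index $k$ with $t_{p_i}>0$ for all $i\ge k$" used in \Cref{prop:ineq-pullback} depends only on the coarse sign pattern and hence is unchanged. Both are immediate from $t(\xi)_p\in(0,1)$ for $p\notin I$, $\xi\in[0,1)$, so there is no genuine difficulty; the proof is essentially an assembly of \Cref{thm:univ-transfer} and \Cref{prop:ineq-pullback}.
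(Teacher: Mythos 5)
Your proof is correct and follows essentially the same route as the paper, which cites \Cref{thm:univ-transfer} for the deformation conditions and \Cref{prop:ineq-pullback} for the degeneration condition; your version simply spells out the sign-pattern bookkeeping that the paper leaves implicit.
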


\begin{proof}
    The map \(\rho\) together with the affine linear forms given by \Cref{def:mpp} is a continuous deformation by \Cref{thm:univ-transfer}.
    The fact that \(\rho\) is a continuous degeneration follows from \Cref{prop:ineq-pullback}.
\end{proof}

Now the machinery of continuous degenerations immediately yields degeneration maps and results on the $f$-vectors of marked poset polyhedra.

\begin{corollary} \label{cor:f-vector-degeneration}
    Let \(u,u'\in\smash{[0,1]^{\tilde P}}\) such that \(u'\) is a degeneration of \(u\). The continuous degeneration in \Cref{prop:mpp-degeneration} yields a degeneration map \(\dg_{u,u'}\colon \OO_u(P,\lambda) \to \OO_{u'}(P,\lambda)\) in the sense of \Cref{cor:degeneration}.
    In particular, the $f$-vectors satisfy
    \begin{equation*}
        f_i(\OO_{u'}(P,\lambda)) \le f_i(\OO_u(P,\lambda)) \quad\text{for all \(i\).}
    \end{equation*}

    Furthermore, given a degeneration \(u''\) of \(u'\), the degeneration maps satisfy
    \begin{equation*}
        \dg_{u,u''} = \dg_{u',u''} \circ \dg_{u,u'}.
    \end{equation*}
\end{corollary}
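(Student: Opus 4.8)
The plan is to get the first two assertions for free from the general theory of \Cref{sec:cont-deg} and to establish the composition law by a relative-interior chase.

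For the first statement, \Cref{prop:mpp-degeneration} exhibits
\[
\rho(x,\xi)=\theta_{u,\xi u'+(1-\xi)u}(x),\qquad \rho_1=\theta_{u,u'},
\]
as a continuous degeneration from \(\OO_u(P,\lambda)\) to \(\OO_{u'}(P,\lambda)\), so \Cref{cor:degeneration} directly produces the surjective order-preserving degeneration map \(\dg_{u,u'}\colon\FF(\OO_u(P,\lambda))\to\FF(\OO_{u'}(P,\lambda))\), characterized by \(\theta_{u,u'}(\relint F)\subseteq\relint\dg_{u,u'}(F)\) for \(F\neq\varnothing\) and \(\dg_{u,u'}(\varnothing)=\varnothing\). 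The inequality \(f_i(\OO_{u'}(P,\lambda))\le f_i(\OO_u(P,\lambda))\) is then exactly \Cref{prop:degenerate-f-vector} applied to this degeneration.

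For the composition law I would first verify that \(u''\) is also a degeneration of \(u\), so that \(\dg_{u,u''}\) is even defined: writing \(I=\{p\in\tilde P:u_p\in\{0,1\}\}\) and \(I'=\{p\in\tilde P:u'_p\in\{0,1\}\}\), the hypotheses give \(u'_p=u_p\) on \(I\) and \(u''_p=u'_p\) on \(I'\); for \(p\in I\) one has \(u'_p=u_p\in\{0,1\}\), hence \(p\in I'\) and \(u''_p=u'_p=u_p\). Next I would record that the transfer maps themselves compose correctly: since \(\theta_{t,t'}=\varphi_{t'}\circ\psi_t\) and \(\psi_{u'}\circ\varphi_{u'}=\id\) by \Cref{thm:univ-transfer},
\[
\theta_{u',u''}\circ\theta_{u,u'}=\varphi_{u''}\circ\psi_{u'}\circ\varphi_{u'}\circ\psi_u=\varphi_{u''}\circ\psi_u=\theta_{u,u''}.
\]
Then, for a nonempty face \(F\) of \(\OO_u(P,\lambda)\), I would set \(G=\dg_{u,u'}(F)\), a nonempty face of \(\OO_{u'}(P,\lambda)\) with \(\theta_{u,u'}(\relint F)\subseteq\relint G\); applying \(\theta_{u',u''}\) and the defining property of \(\dg_{u',u''}\) on \(G\) gives
\[
\theta_{u,u''}(\relint F)=\theta_{u',u''}\bigl(\theta_{u,u'}(\relint F)\bigr)\subseteq\theta_{u',u''}(\relint G)\subseteq\relint\dg_{u',u''}(G).
\]
Since \(\theta_{u,u''}\) is a bijection this set is nonempty, and since relative interiors of distinct faces of \(\OO_{u''}(P,\lambda)\) are pairwise disjoint, the uniqueness clause of \Cref{prop:degeneration} forces \(\dg_{u,u''}(F)=\dg_{u',u''}(G)=\dg_{u',u''}(\dg_{u,u'}(F))\); the empty face is trivial, so \(\dg_{u,u''}=\dg_{u',u''}\circ\dg_{u,u'}\).

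The argument is entirely formal once \Cref{sec:cont-deg} is in place; the one step requiring care is the final one, where the inclusion of relative interiors has to be promoted to an equality of faces via the uniqueness in \Cref{prop:degeneration} — together with the small bookkeeping that degeneration of parameters is transitive, which is exactly what makes \(\dg_{u,u'}\), \(\dg_{u',u''}\) and \(\dg_{u,u''}\) simultaneously meaningful.
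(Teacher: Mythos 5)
Your proof is correct and follows the same route as the paper, which simply cites \Cref{prop:degeneration}, \Cref{cor:degeneration}, \Cref{prop:degenerate-f-vector} and notes that the composition law is ``an immediate consequence of $\theta_{u,u''}=\theta_{u',u''}\circ\theta_{u,u'}$.'' You have usefully spelled out what the paper leaves implicit --- namely the transitivity of the degeneration relation on parameters, the cancellation $\psi_{u'}\circ\varphi_{u'}=\id$, and the relative-interior chase that promotes the transfer-map identity to an identity of degeneration maps via the uniqueness clause of \Cref{prop:degeneration}.
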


\begin{proof}
    After applying \Cref{prop:degeneration}, \Cref{cor:degeneration} and \Cref{prop:degenerate-f-vector} to the situation at hand, all that remains to be proven is the statement about compositions of degeneration maps.
    This is an immediate consequence of \(\theta_{u,u''} = \theta_{u',u''} \circ \theta_{u,u'}\).
\end{proof}

\section{Tropical Arrangements and Subdivisions}
\label{sec:tropical}

As discussed in \Cref{sec:mop-JS-subdivision}, the marked order polyhedron \(\OO(P,\lambda)\) comes with a subdivision \(\SSc\) into products of simplices and simplicial cones.
Since the parametrized transfer map \(\varphi_t\) is linear on each cell of \(\SSc\), we have a transferred subdivision \(\SSc_t\) of \(\OO_t(P,\lambda)\) for all \(t\in\smash{[0,1]^{\tilde P}}\).

In this section we introduce a coarsening of \(\SSc\) into linearity regions of \(\varphi\), obtained by intersecting \(\OO(P,\lambda)\) with the cells in a tropical hyperplane arrangement determined by \((P,\lambda)\).
Our main reason to consider this subdivision is a result in \Cref{sec:generic-vertices}, where we will show that the vertices of generic marked poset polyhedra are given by the vertices in this subdivision and hence can be obtained by first subdividing the marked order polyhedron to then transfer the vertices in the subdivision.
The notation we use here is close to \cite{FR15}, where the combinatorics of tropical hyperplane arrangements are discussed in detail.

\subsection{Tropical Hyperplane Arrangements}

In tropical geometry, the usual ring structure \((\R,+,\cdot)\) we use for Euclidean geometry is replaced by the \emph{tropical semiring} \((\R\cup\{-\infty\},\oplus,\odot)\), where \(a\oplus b=\max(a,b)\), \(a\odot b=a+b\) and \(-\infty\) is the identity with respect to \(\oplus\).
Hence, a \emph{tropical polynomial} is a convex piecewise-linear function in ordinary terms:
\begin{equation*}
    \bigoplus_{a\in\N^n} c_a \odot x_1^{\odot a_1} \odot \cdots \odot x_n^{\odot a_n} = \max \big\{\, a_1 x_1 + \cdots + a_n x_n + c_a : a\in\N^n \,\big\}.
\end{equation*}
Given a \emph{tropical linear form}
\begin{equation*}
    \alpha = \bigoplus_{i=1}^n c_i \odot x_i = \max \big\{\, x_i + c_i : i=1,\dots,n \,\big\},
\end{equation*}
where some---but not all---coefficients are allowed to be \(-\infty\), one defines a \emph{tropical hyperplane} \(H_\alpha\) consisting of all \(x\in\R^n\) such that \(\alpha\) is non-differentiable at \(x\) or equivalently, the maximum in \(\alpha(x)\) is attained at least twice.

We may pick some of the coefficients \(c_i\) to be \(-\infty\) to obtain tropical linear forms only involving some of the coordinates.
For example when \(n=3\) we could have
\begin{equation*}
    \alpha = (1 \odot x_1) \oplus (3 \odot x_2) = \max\{1+x_1,3+x_2\} = (1 \odot x_1) \oplus (3 \odot x_2) \oplus (-\infty \odot x_3)
\end{equation*}
and the tropical hyperplane \(H_\alpha\) would just be the usual hyperplane \(1 + x_1 = 3 + x_2\).
Given a tropical hyperplane, one obtains a polyhedral subdivision of \(\R^n\) with facets the linearity regions of \(\alpha\) and the skeleton of codimension \(1\) being \(H_\alpha\) as follows:
for a tropical hyperplane \(H=H_\alpha\) in \(\R^n\) define the \emph{support} \(\supp(H)\) as the set of all \(i\in[n]\) such that the coefficient \(c_i\) is different from \(-\infty\) in \(\alpha\).
For any non-empty subset \(L\subseteq \supp(H)\) we have a cell
\begin{equation*}
    F_L(H) = \left\{ \, x\in\R^n : \text{\(c_l + x_l = \max_{i\in\supp(H)} (x_i + c_i)\) for all \(l\in L\)} \, \right\}.
\end{equation*}
The facets \(F_{\{l\}}\) for \(l\in\supp(H)\) are the linearity regions of \(\alpha\) and the cells \(F_L\) for \(|L|\ge 2\) form a subdivision of \(H_\alpha\).
Given any \(x\in\R^n\), we define its \emph{signature} \(\sig_H(x)\) as the unique \(L\subseteq\supp(H)\) such that \(x\) is in the relative interior of \(F_L\).
Equivalently, the signature of \(x\) is the set of indices achieving the maximum in \(\alpha(x)\),
\begin{equation*}
    \sig_H(x) = \argmax_{i\in\supp(H)}\ (x_i + c_i).
\end{equation*}
Using this terminology, we may also describe \(F_L(H)\) as the set of all points \(x\in\R^n\) with \(L\subseteq\sig_H(x)\)

Now let \(\HH=\{H_1,H_2,\dots,H_m\}\) be a \emph{tropical hyperplane arrangement}, that is, each \(H_i\) is a tropical hyperplane \(H_{\alpha_i}\subseteq\R^n\) for a tropical linear form \(\alpha_i\).
The common refinement \(\TT(\HH)\) of the polyhedral subdivision of \(H_1, H_2,\dots, H_m\) gives a polyhedral subdivision of \(\R^n\) whose facets are the largest regions on which all \(\alpha_i\) are linear and whose \((n-1)\)-skeleton is a subdivision of \(\bigcup H_i\).
To each \(x\in\R^n\) we associate the \emph{tropical covector} \(\tc(x)\colon [m]\to 2^{[n]}\) recording the signatures with respect to all hyperplanes, that is
\begin{equation*}
    \tc(x) = \left( \sig_{H_1}(x), \sig_{H_2}(x), \dots, \sig_{H_m}(x) \right).
\end{equation*}
Hence, the cells of \(\TT(\HH)\) are enumerated by the appearing tropical covectors when \(x\) varies over all points in \(\R^n\).
The set of all these tropical covectors is called the \emph{combinatorial type} of \(\HH\) and denoted \(\TC(\HH)\).
For each \(\tau\in\TC(\HH)\) the corresponding cell is given by
\begin{equation*}
    F_\tau = \bigcap_{i=1}^m F_{\tau_i}(H_i)
\end{equation*}
and its relative interior consists of all \(x\in\R^n\) such that \(\tc(x)=\tau\).

To digest all these definitions, let us look at a small example before using the introduced terminology to define a subdivision of marked poset polyhedra.

\begin{example} \label{ex:tropical-arrangement}
    Let \(n=3\) and consider the following tropical linear forms:
    \begin{alignat*}{2}
        \alpha_1 &= \left( (-2)\odot x_1 \right) \oplus \left( (-1)\odot x_2 \right) \oplus \left( 0 \odot x_3 \right) &&= \max\{x_1-2,x_2-1,x_3\}, \\
        \alpha_2 &= \left( (-2)\odot x_1 \right) \oplus \left( 0\odot x_2 \right) \oplus \left( (-\infty) \odot x_3 \right) &&= \max\{x_1-2,x_2\}, \\
        \alpha_3 &= \left( (-1)\odot x_1 \right) \oplus \left( (-\infty)\odot x_2 \right) \oplus \left( 0 \odot x_3 \right) &&= \max\{x_1-1,x_3\}.
    \end{alignat*}
    Let \(\HH=\{H_1,H_2,H_3\}\) be the tropical hyperplane arrangement with \(H_i\) given by \(\alpha_i\) for \(i=1,2,3\).
    The supports of the three hyperplanes are \(\supp(H_1)=\{1,2,3\}\), \(\supp(H_2)=\{1,2\}\) and \(\supp(H_3)=\{1,3\}\).
    Since tropical hyperplanes are invariant under translations along the all-one vector \((1,1,\dots,1)\in\R^n\), we obtain a faithful picture of the subdivision \(\TT(\HH)\) by just looking at the slice \(x_n=0\).
    This is done in \Cref{fig:tropical-arrangement} for the example at hand with some of the appearing tropical covectors listed.
    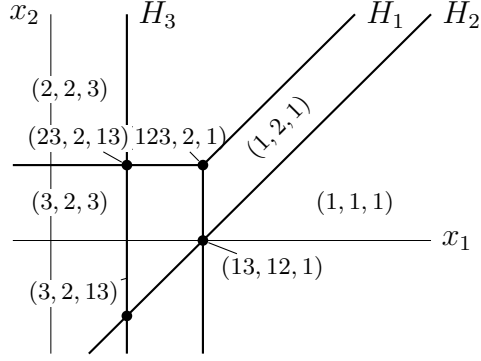
\begin{figure}
        \centering
        \begin{tikzpicture}
            \draw[very thin] (-.5,0) -- (5,0) node[right] {\(x_1\)};
            \draw[very thin] (0,-1.5) -- (0,3) node[left] {\(x_2\)};
            \draw[thick]
                (1,-1.5) -- (1,3) node[right] {\(H_3\)}
                (.5,-1.5) -- (5,3) node[right] {\(H_2\)}
                (2,-1.5) -- (2,1) -- (-.5,1)
                (2,1) -- (4,3) node[right] {\(H_1\)};
            \tikzset{vertex/.style={fill,circle,minimum size=4pt,inner sep=0},tc/.style={font=\scriptsize,inner sep=.5pt}}
            \path (2,0) node[vertex] (a) {} node[tc,below right=5pt,xshift=1pt] (atc) {\((13,12,1)\)};
            \path (2,1) node[vertex] (b) {} node[tc,above=5pt,xshift=-10pt] (btc) {\((123,2,1)\)};
            \path (1,1) node[vertex] (c) {} node[tc,above=5pt,xshift=-18pt,fill=white] (ctc) {\((23,2,13)\)};
            \path (1,-1) node[vertex] {};
            \draw (atc) -- (a);
            \draw (btc) -- (b);
            \draw (ctc) -- (c);
            \node[tc,rotate=45] at (3,1.5) {\((1,2,1)\)};
            \node[tc] at (4,.5) {\((1,1,1)\)};
            \node[tc,fill=white] at (.25,2) {\((2,2,3)\)};
            \node[tc,fill=white] at (.25,.5) {\((3,2,3)\)};
            \node[tc,fill=white] (etc) at (.3,-.7) {\((3,2,13)\)};
            \draw (etc) -- (1,-.5);
        \end{tikzpicture}
        \caption[Tropical hyperplane arrangement and covectors from \Cref{ex:tropical-arrangement}]{The tropical hyperplane arrangement from \Cref{ex:tropical-arrangement} sliced at \(x_3=0\) with some of the appearing tropical covectors listed.}
        \label{fig:tropical-arrangement}
    \end{figure}
\end{example}

\subsection{The Tropical Subdivision}

We are now ready to introduce the \emph{tropical subdivision} of marked poset polyhedra.
As before, let \((P,\lambda)\) be a marked poset with at least all minimal elements marked.
The transfer maps \(\varphi_t\) of \Cref{thm:univ-transfer} give rise to the tropical linear forms
\begin{equation*}
    \alpha_p = \max_{q\prec p} x_q = \bigoplus_{q\prec p} x_q \quad\text{for \(p\in\tilde P\)}.
\end{equation*}
When \(p\) is not covering at least two elements, the tropical linear form \(\alpha_p\) has just one term and defines an empty tropical hyperplane since the maximum can never be achieved twice.
Hence, let \(R\) denote the set of all \(p\in\tilde P\) covering at least two elements and define a tropical hyperplane arrangement \(\HH(P,\lambda)\) in \(\R^P\) with tropical hyperplanes \(H_p = H_{\alpha_p}\) for all \(p\in R\).
By construction, the facets of \(\TT(\HH(P,\lambda))\) are the linearity regions of \(\varphi_t\) for \(t\in\smash{(0,1]^{\tilde P}}\).

The reason this subdivision will help study the combinatorics of marked poset polytopes is the following: by \Cref{prop:ineq-pullback} the combinatorics of \(\OO_t(P,\lambda)\) can be determined by pulling points back to \(\OO(P,\lambda)\) and looking at the relation \(\dashv_x\).
But for \(r\in R\) and \(p\in P\) we have \(p\dashv_x r\) if and only if \(p\in\tc(x)_r\), so the information encoded in \(\dashv_x\) is equivalent to knowing the minimal cell of \(\TT(\HH(P,\lambda))\) containing \(x\).

Using this tropical hyperplane arrangement, we can define a polyhedral subdivision of \(\OO(P,\lambda)\).

\begin{definition}
    Let \(\TT(\HH(P,\lambda))\) be the polyhedral subdivision of \(\R^P\) associated to the marked poset \((P,\lambda)\).
    The \emph{tropical subdivision} \(\TT(P,\lambda)\) of \(\OO(P,\lambda)\) is given by the intersection of faces of \(\OO(P,\lambda)\) with the faces of \(\TT(\HH(P,\lambda))\):
    \begin{equation*}
        \TT(P,\lambda) = \left\{ \, F\cap G \,\middle|\, F\in\FF(\OO(P,\lambda)), G\in\TT(\HH(P,\lambda)) \,\right\}.
    \end{equation*}
    For \(t\in\smash{[0,1]^{\tilde P}}\) define the tropical subdivision of \(\OO_t(P,\lambda)\) as
    \begin{equation*}
        \TT_t(P,\lambda) = \left\{ \varphi_t(Q) \,\middle|\, Q\in \TT(P,\lambda) \right\}.
    \end{equation*}
\end{definition}

Note that \(\TT_t(P,\lambda)\) is polyhedral subdivision of \(\OO_t(P,\lambda)\) since \(\varphi_t\) is linear on each \(G\in\TT(P,\lambda)\) by construction.
In particular, \(\TT_t(P,\lambda)\) is a coarsening of the subdivision \(\SSc_t\) into products of simplices and simplicial cones.

\section{Vertices in the Generic Case}
\label{sec:generic-vertices}

Using the tropical subdivision from \Cref{sec:tropical} and the concept of continuous degenerations from \Cref{sec:cont-deg}, we are ready to prove a theorem describing the vertices of generic marked poset polyhedra.

\begin{theorem} \label{thm:generic-vertices}
    The vertices of a generic marked poset polyhedron \(\OO_t(P,\lambda)\) with \(t\in(0,1)^{\tilde{P}}\) are exactly the vertices in its tropical subdivision \(\TT_t(P,\lambda)\).
\end{theorem}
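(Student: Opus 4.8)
The plan is to prove the two inclusions separately. One of them is formal: a vertex of a polyhedron is an extreme point, hence a $0$\nobreakdash-dimensional cell of every polyhedral subdivision of that polyhedron, and $\TT_t(P,\lambda)$ is such a subdivision of $\OO_t(P,\lambda)$; so the vertices of $\OO_t(P,\lambda)$ are automatically vertices of $\TT_t(P,\lambda)$. All the work is in the converse.

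For that direction, note first that by \Cref{thm:univ-transfer} the map $\varphi_t$ is a homeomorphism which is affine on each cell of $\TT(P,\lambda)$, so the $0$\nobreakdash-cells of $\TT_t(P,\lambda)$ are precisely the points $\varphi_t(w)$ with $\{w\}$ a $0$\nobreakdash-cell of $\TT(P,\lambda)$, and I must show each such $\varphi_t(w)$ is a vertex of $\OO_t(P,\lambda)$. Writing $F_0$ for the smallest face of $\OO(P,\lambda)$ containing $w$ and $G_0$ for the smallest cell of $\TT(\HH(P,\lambda))$ containing $w$, being a $0$\nobreakdash-cell means $\{w\}=F_0\cap G_0$, and since $w$ lies in the relative interior of both this forces $\aff F_0\cap\aff G_0=\{w\}$. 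Concretely $\aff F_0$ is cut out by the equations $x_a=\lambda(a)$ together with $x_q=x_p$ for the cover relations $q\prec p$ with $w_q=w_p$, and $\aff G_0$ by the equations $x_q=x_{q'}$ for pairs with $q\dashv_w r$ and $q'\dashv_w r$ for a common $r\in R$.

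Now suppose $v\coloneqq\varphi_t(w)$ were not a vertex of $\OO_t(P,\lambda)$; then $v\pm s\delta\in\OO_t(P,\lambda)$ for some $\delta\ne 0$ and all small $s>0$. Pulling the two rays back through $\psi_t$, which is affine on the cells of $\TT_t(P,\lambda)$, produces cells $\tilde D_\pm$ of $\TT(P,\lambda)$ containing $w$ and nonzero directions $e_\pm$ with $\psi_t(v\pm s\delta)=w+s e_\pm\in\OO(P,\lambda)$ for small $s$. Every defining inequality \eqref{eq:chain} of $\OO_t(P,\lambda)$ that is tight at $v$ is tight along the whole segment $[v-s\delta,\,v+s\delta]$, hence tight at $v\pm s\delta=\varphi_t(w+se_\pm)$; applying \Cref{prop:ineq-pullback} at $x=w+se_\pm$ (here $t\in(0,1)^{\tilde P}$ enters: the criterion becomes the purely combinatorial condition $x_p=x_{p_m}$ together with $p_0\dashv_x p_1\dashv_x\cdots\dashv_x p_m$) and comparing coefficients of $s$ gives, for every $\dashv_w$\nobreakdash-chain $p_0\prec\cdots\prec p_m\prec p$ with $w_p=w_{p_m}$, that $(e_\pm)_p=(e_\pm)_{p_m}$ and $(e_\pm)_{p_{i-1}}\ge (e_\pm)_q$ whenever $q\dashv_w p_i$. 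Since every tight cover relation $q\prec p$ occurs as the top step of such a chain, and $(e_\pm)_a=0$ because $w+se_\pm\in\OO(P,\lambda)$, the equalities already force $e_\pm\in\aff F_0-w$. Moreover, for $r\in R$ carrying a $\dashv_w$\nobreakdash-tie one can route two tight chains through two different maximizers below $r$: since the reindexed chain is again a $\dashv_w$\nobreakdash-chain with the same top value, it is again tight, and the two inequalities $(e_\pm)_{p_{i-1}}\ge(e_\pm)_q$ combine to $(e_\pm)_q=(e_\pm)_{q'}$ for all $q,q'\dashv_w r$ — so $e_\pm\in\aff G_0-w$ as well. Hence $e_\pm\in(\aff F_0-w)\cap(\aff G_0-w)=\{0\}$, a contradiction, and $v$ is a vertex.

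The main obstacle is the bookkeeping in the last paragraph, specifically the claim that \emph{every} relevant equation of $\aff F_0$ and of $\aff G_0$ is witnessed by an inequality that is tight at $v$. For $\aff F_0$ this is routine (walk down maximizers from the larger element of each tight cover), but for $\aff G_0$ one must rule out an $r\in R$ with a $\dashv_w$\nobreakdash-tie that never appears inside a tight chain in a position below which the maximizer can be varied. This is exactly where $\aff F_0\cap\aff G_0=\{w\}$ is used essentially: if such an $r$ existed, one checks by an induction up the poset — propagating along $\dashv_w$ from $r$, using that $w_s>w_r$ whenever $r\dashv_w s$ with $s$ not a ``sub-top'' — that some coordinate $x_{r'}$ would occur in no equation of the defining system at all, so that system could not have full rank, contradicting that $w$ is a $0$\nobreakdash-cell. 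Making this induction precise, and checking that the reindexed chains produced in the routing step are genuinely tight, is the technical heart; the role of genericity throughout is that for $t\in(0,1)^{\tilde P}$ \Cref{prop:ineq-pullback} pins the tight inequalities down to the $\dashv_w$\nobreakdash-data, which is the finest information the argument needs.
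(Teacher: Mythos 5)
Your overall strategy parallels the paper's: reduce to showing each $0$-cell $\{w\}$ of $\TT(P,\lambda)$ maps under $\varphi_t$ to a vertex of $\OO_t(P,\lambda)$, pull back through $\psi_t$, and use \Cref{prop:ineq-pullback} to turn tight inequalities at $\varphi_t(w)$ into $\dashv_w$-data. The routing differences are minor and both valid: you argue with direction vectors $e_\pm$ directly at generic $t$, whereas the paper proves the claim at an intermediate parameter $u$ (with $u_p\in(0,1)$ exactly for $p\in R_G$ and $u_p=0$ otherwise) and then imports it to all $t\in(0,1)^{\tilde P}$ via the degeneration machinery of \Cref{prop:degeneration}; the paper also phrases the $G_0$-condition as $\tc(v)_r\subseteq\tc(z)_r$ rather than as tangent-space membership. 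Your treatment of the easy inclusion, which the paper only remarks on, is also fine.

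The genuine gap is exactly where you locate it, and your sketch does not close it. To get $e_\pm\in\aff G_0-w$ you must, for each $r\in R$ with $|\tc(w)_r|\ge 2$ and each $q\dashv_w r$, exhibit a chain whose inequality is tight at $\varphi_t(w)$ and contains $q\prec r$ as an inner step, so that re-routing through another maximizer yields the opposing inequality; the hard part is completing that chain upward to some top element $p$ with $w_p$ equal to the value immediately below it. The one-sentence sketch (``propagating along $\dashv_w$ from $r$ \dots\ some coordinate $x_{r'}$ would occur in no equation'') gestures at the right invariant, but the construction the paper actually needs is substantive: walk upward from $r$ along the \emph{restricted} relation $\Dashv_w$ (each step's upper endpoint must lie in $R_G$) to a maximal element $p_k$; prove that the block $B$ of $p_k$ in the face partition $\pi_{F_0}$ is not a singleton — this is the rank argument you allude to, but the restriction to $R_G$ together with maximality of the chain is precisely what guarantees $x_{p_k}$ occurs in no $G_0$-equation and hence would be a free variable if $B$ were a singleton; then split into cases according to whether the witness of $|B|\ge 2$ lies above $p_k$ (extend the chain to $p$) or below $p_k$ (terminate the chain at $p_k$, checking $w_{p_{k-1}}=w_{p_k}$ via $p\dashv_w p_k$ and $p_{k-1}\dashv_w p_k$). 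Your sketch mentions none of: the restriction to $R_G$, the role of $\pi_{F_0}$, or the downward case. Without them the free-variable argument does not close and the chain completion may simply fail for an unrestricted $\dashv_w$-walk. This construction is the technical heart of the theorem and is exactly what you acknowledge is left unproven.
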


As a consequence, the vertices of the generic marked poset polyhedron can be obtained by subdividing the marked order polyhedron using the associated tropical subdivision and transferring the obtained vertices via the transfer map \(\varphi_t\) to \(\OO_t(P,\lambda)\).
Furthermore, even for arbitrary \(t\in[0,1]^{\tilde{P}}\), the set of points obtained this way will always contain the vertices of \(\OO_t(P,\lambda)\).

Before proceeding with the proof of \Cref{thm:generic-vertices} let us illustrate the situation with an example.
\begin{figure}
    \centering
    \begin{tikzpicture}[scale=1]
        \path (0,3) node[posetelmm] (4) {} node[above=2pt,marking] {\(4\)};
        \path (0,1) node[posetelm] (p) {} node[left=2pt,elmname] {\(p\)};
        \path (1,1) node[posetelm] (q) {} node[right=2pt,elmname] {\(q\)};
        \path (0,2) node[posetelm] (r) {} node[above right,elmname] {\(r\)};
        \path (0,0) node[posetelmm] (0) {} node[below=2pt,marking] {\(0\)};
        \path (1,2) node[posetelmm] (3) {} node[right=2pt,marking] {\(3\)};
        \path (-1,1) node[posetelmm] (2) {} node[left=2pt,marking] {\(2\)};
        \draw[covrel]
              (0) -- (p) -- (r) -- (4)
              (0) -- (q) -- (r) -- (2)
              (p) -- (3) -- (q);
    \end{tikzpicture}
    \caption[Marked poset from \Cref{ex:vertices}]{The marked poset from \Cref{ex:vertices}}
    \label{fig:vertex-poset}
\end{figure}

\begin{example} \label{ex:vertices}
    Let \((P,\lambda)\) be the marked poset given in \Cref{fig:vertex-poset}.
    The hyperplane arrangement \(\HH(P,\lambda)\) consists of just one tropical hyperplane given by the tropical linear form
    \begin{equation*}
        \alpha_r = \max \{x_2, x_p, x_q\} = x_2 \oplus x_p \oplus x_q.
    \end{equation*}
    It divides the space \(\R^P\) into three regions where either \(x_2\), \(x_p\) or \(x_q\) is maximal among the three coordinates.
    Intersecting this subdivision with \(\OO(P,\lambda)\) we obtain the the tropical subdivision shown in \Cref{subfig:vertex-O}, where the hyperplane itself is shaded in red.
    We see the 11 vertices of the polytope depicted in green and 3 additional vertices of the tropical subdivision that are not vertices of \(\OO(P,\lambda)\) in red.
    Since \(t_p\) and \(t_q\) are irrelevant for the affine type of \(\OO_t(P,\lambda)\) by \Cref{prop:irrelevant-p}---and in fact only get multiplied by \(0\) in the projected transfer map \(\varphi_t\)---we only need to consider the parameter \(t_r\).
    In \Cref{subfig:vertex-gen} we see the tropical subdivision of \(\OO_t(P,\lambda)\) for \(t_r=\tfrac 1 2\).
    Now all vertices that appear in the subdivision are green, i.e., they are vertices of the polytope, as stated in \Cref{thm:generic-vertices}.
    When \(t_r=1\), we obtain the tropic subdivision of the marked chain polytope \(\CC(P,\lambda)\) as shown in \Cref{subfig:vertex-C}.
    Again, some of the vertices in the subdivision are not vertices of the polytope.
    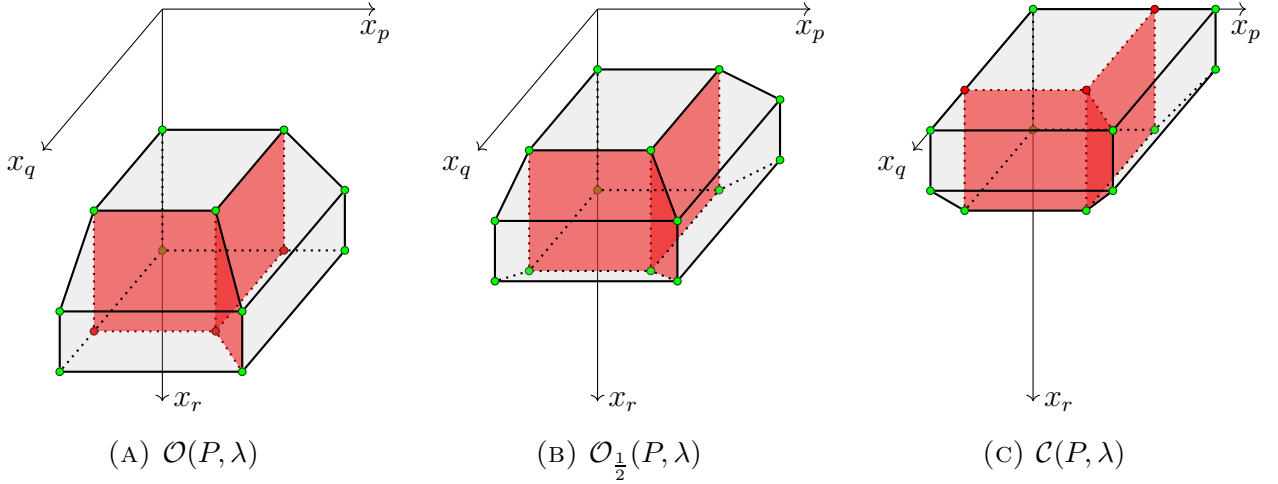
\begin{figure}
        \centering
        \makebox[\textwidth][c]{
        \subcaptionbox[]{\(\OO(P,\lambda)\)\label{subfig:vertex-O}}[0.33\textwidth][c]{
        \begin{tikzpicture}[scale=1,baseline={(current bounding box.center)},
                x={(0:.8cm)},
                y={(230:.7cm)},
                z={(270:.8cm)},
                back/.style={dotted},
                edge/.style={thick},
                edgeb/.style={thick,color=red!50!black},
                facet/.style={fill=black!30,fill opacity=0.2},
                facetb/.style={fill=red,fill opacity=0.6},
                vertex/.style={inner sep=1pt,circle,draw=green!25!black,fill=green,anchor=base},
                vertexb/.style={inner sep=1pt,circle,draw=red!25!black,fill=red,anchor=base},
                axis/.style={->}
                ]
            \draw[axis] (0,0,0) -- (3.5,0,0) node[below] {\(x_p\)};
            \draw[axis] (0,0,0) -- (0,3.5,0) node[below left=-2pt] {\(x_q\)};
            \draw[axis] (0,0,0) -- (0,0,6.5) node[right] {\(x_r\)};
            \coordinate (A) at (0,0,2);
            \coordinate (B) at (2,0,2);
            \coordinate (C) at (2,2,2);
            \coordinate (D) at (0,2,2);
            \coordinate (E) at (3,0,3);
            \coordinate (F) at (3,3,3);
            \coordinate (G) at (0,3,3);
            \coordinate (H) at (0,0,4);
            \coordinate (I) at (3,0,4);
            \coordinate (J) at (3,3,4);
            \coordinate (K) at (0,3,4);
            \coordinate (L) at (2,0,4);
            \coordinate (M) at (2,2,4);
            \coordinate (N) at (0,2,4);

            \node[vertex] (Hn) at (H) {}; 

            \fill[facetb] (B) -- (C) -- (M) -- (L) -- cycle;
            \fill[facetb] (C) -- (F) -- (J) -- (M) -- cycle;
            \fill[facetb] (D) -- (C) -- (M) -- (N) -- cycle;

            \node[vertexb] (Ln) at (L) {};
            \node[vertexb] (Mn) at (M) {};
            \node[vertexb] (Nn) at (N) {};

            \fill[facet] (A) -- (B) -- (C) -- (D) -- cycle;
            \fill[facet] (B) -- (C) -- (F) -- (E) -- cycle;
            \fill[facet] (C) -- (D) -- (G) -- (F) -- cycle;
            \fill[facet] (E) -- (F) -- (J) -- (I) -- cycle;
            \fill[facet] (F) -- (G) -- (K) -- (J) -- cycle;

            \node[vertex] (Dn) at (D) {}; 
            \node[vertex] (Gn) at (G) {}; 
            \node[vertex] (An) at (A) {}; 
            \node[vertex] (Bn) at (B) {}; 
            \node[vertex] (Cn) at (C) {}; 
            \node[vertex] (En) at (E) {}; 
            \node[vertex] (Fn) at (F) {}; 
            \node[vertex] (In) at (I) {}; 
            \node[vertex] (Jn) at (J) {}; 
            \node[vertex] (Kn) at (K) {};

            \draw[edgeb,back]
                (Mn) -- (Cn)
                (Nn) -- (Dn)
                (Ln) -- (Mn)
                (Mn) -- (Nn)
                (Mn) -- (Jn)
                (Ln) -- (Bn)
                ;

            \draw[edge,back]
                (An) -- (Hn)
                (Hn) -- (Ln)
                (Nn) -- (Hn)
                (Ln) -- (In)
                (Kn) -- (Nn)
                ;
            \draw[edge]
                (Cn) -- (Dn)
                (Dn) -- (An)
                (Dn) -- (Gn)
                (Fn) -- (Gn)
                (Gn) -- (Kn)
                (An) -- (Bn)
                (Bn) -- (Cn)
                (Bn) -- (En)
                (Cn) -- (Fn)
                (En) -- (Fn)
                (En) -- (In)
                (Fn) -- (Jn)
                (In) -- (Jn)
                (Jn) -- (Kn)
                ;
        \end{tikzpicture}}
        \hskip 1.5em
        \subcaptionbox[]{\(\OO_{\frac 1 2}(P,\lambda)\)\label{subfig:vertex-gen}}[0.33\textwidth][c]{
        \begin{tikzpicture}[scale=1,baseline={(current bounding box.center)},
                x={(0:.8cm)},
                y={(230:.7cm)},
                z={(270:.8cm)},
                back/.style={dotted},
                edge/.style={thick},
                edgeb/.style={thick,color=red!50!black},
                facet/.style={fill=black!30,fill opacity=0.2},
                facetb/.style={fill=red,fill opacity=0.6},
                vertex/.style={inner sep=1pt,circle,draw=green!25!black,fill=green,anchor=base},
                vertexb/.style={inner sep=1pt,circle,draw=red!25!black,fill=red,anchor=base},
                axis/.style={->}
                ]
            \draw[axis] (0,0,0) -- (3.5,0,0) node[below] {\(x_p\)};
            \draw[axis] (0,0,0) -- (0,3.5,0) node[below left=-2pt] {\(x_q\)};
            \draw[axis] (0,0,0) -- (0,0,6.5) node[right] {\(x_r\)};
            \coordinate (A) at (0,0,1);
            \coordinate (B) at (2,0,1);
            \coordinate (C) at (2,2,1);
            \coordinate (D) at (0,2,1);
            \coordinate (E) at (3,0,3/2);
            \coordinate (F) at (3,3,3/2);
            \coordinate (G) at (0,3,3/2);
            \coordinate (I) at (3,0,5/2);
            \coordinate (J) at (3,3,5/2);
            \coordinate (K) at (0,3,5/2);
            \coordinate (L) at (2,0,3);
            \coordinate (H) at (0,0,3);
            \coordinate (M) at (2,2,3);
            \coordinate (N) at (0,2,3);

            \node[vertex] (Hn) at (H) {}; 

            \fill[facetb] (B) -- (C) -- (M) -- (L) -- cycle;
            \fill[facetb] (C) -- (F) -- (J) -- (M) -- cycle;
            \fill[facetb] (D) -- (C) -- (M) -- (N) -- cycle;

            \node[vertex] (Ln) at (L) {};
            \node[vertex] (Mn) at (M) {};
            \node[vertex] (Nn) at (N) {};

            \fill[facet] (A) -- (B) -- (C) -- (D) -- cycle;
            \fill[facet] (B) -- (C) -- (F) -- (E) -- cycle;
            \fill[facet] (C) -- (D) -- (G) -- (F) -- cycle;
            \fill[facet] (E) -- (F) -- (J) -- (I) -- cycle;
            \fill[facet] (F) -- (G) -- (K) -- (J) -- cycle;

            \node[vertex] (Dn) at (D) {}; 
            \node[vertex] (Gn) at (G) {}; 
            \node[vertex] (An) at (A) {}; 
            \node[vertex] (Bn) at (B) {}; 
            \node[vertex] (Cn) at (C) {}; 
            \node[vertex] (En) at (E) {}; 
            \node[vertex] (Fn) at (F) {}; 
            \node[vertex] (In) at (I) {}; 
            \node[vertex] (Jn) at (J) {}; 
            \node[vertex] (Kn) at (K) {};

            \draw[edgeb,back]
                (Mn) -- (Cn)
                (Nn) -- (Dn)
                (Ln) -- (Bn)
                ;

            \draw[edge,back]
                (Ln) -- (Mn)
                (Mn) -- (Nn)
                (Mn) -- (Jn)
                (An) -- (Hn)
                (Hn) -- (Ln)
                (Nn) -- (Hn)
                (Ln) -- (In)
                (Kn) -- (Nn)
                ;
            \draw[edge]
                (Cn) -- (Dn)
                (Dn) -- (An)
                (Dn) -- (Gn)
                (Fn) -- (Gn)
                (Gn) -- (Kn)
                (An) -- (Bn)
                (Bn) -- (Cn)
                (Bn) -- (En)
                (Cn) -- (Fn)
                (En) -- (Fn)
                (En) -- (In)
                (Fn) -- (Jn)
                (In) -- (Jn)
                (Jn) -- (Kn)
                ;
        \end{tikzpicture}}
        \hskip 1.5em
        \subcaptionbox[]{\(\CC(P,\lambda)\)\label{subfig:vertex-C}}[0.33\textwidth][c]{
        \begin{tikzpicture}[scale=1,baseline={(current bounding box.center)},
                x={(0:.8cm)},
                y={(230:.7cm)},
                z={(270:.8cm)},
                back/.style={dotted},
                edge/.style={thick},
                edgeb/.style={thick,color=red!50!black},
                facet/.style={fill=black!30,fill opacity=0.2},
                facetb/.style={fill=red,fill opacity=0.6},
                vertex/.style={inner sep=1pt,circle,draw=green!25!black,fill=green,anchor=base},
                vertexb/.style={inner sep=1pt,circle,draw=red!25!black,fill=red,anchor=base},
                axis/.style={->}
                ]
            \draw[axis] (0,0,0) -- (3.5,0,0) node[below] {\(x_p\)};
            \draw[axis] (0,0,0) -- (0,3.5,0) node[below left=-2pt] {\(x_q\)};
            \draw[axis] (0,0,0) -- (0,0,6.5) node[right] {\(x_r\)};

            \coordinate (A) at (0,0,0);
            \coordinate (B) at (2,0,0);
            \coordinate (C) at (2,2,0);
            \coordinate (D) at (0,2,0);
            \coordinate (E) at (3,0,0);
            \coordinate (F) at (3,3,0);
            \coordinate (G) at (0,3,0);
            \coordinate (I) at (3,0,1);
            \coordinate (J) at (3,3,1);
            \coordinate (K) at (0,3,1);
            \coordinate (L) at (2,0,2);
            \coordinate (H) at (0,0,2);
            \coordinate (M) at (2,2,2);
            \coordinate (N) at (0,2,2);

            \node[vertex] (Hn) at (H) {}; 

            \fill[facetb] (B) -- (C) -- (M) -- (L) -- cycle;
            \fill[facetb] (C) -- (F) -- (J) -- (M) -- cycle;
            \fill[facetb] (D) -- (C) -- (M) -- (N) -- cycle;

            \node[vertex] (Ln) at (L) {};

            \fill[facet] (A) -- (G) -- (K) -- (N) -- (M) -- (J) -- (I) -- (E) -- cycle;

            \node[vertex] (Mn) at (M) {};
            \node[vertex] (Nn) at (N) {};
            \node[vertexb] (Dn) at (D) {}; 
            \node[vertex] (Gn) at (G) {}; 
            \node[vertex] (An) at (A) {}; 
            \node[vertexb] (Bn) at (B) {}; 
            \node[vertexb] (Cn) at (C) {}; 
            \node[vertex] (En) at (E) {}; 
            \node[vertex] (Fn) at (F) {}; 
            \node[vertex] (In) at (I) {}; 
            \node[vertex] (Jn) at (J) {}; 
            \node[vertex] (Kn) at (K) {};

            \draw[edgeb,back]
                (Mn) -- (Cn)
                (Nn) -- (Dn)
                (Ln) -- (Bn)
                (Cn) -- (Dn)
                (Cn) -- (Fn)
                (Bn) -- (Cn)
                ;

            \draw[edge,back]
                (Ln) -- (Mn)
                (An) -- (Hn)
                (Hn) -- (Ln)
                (Nn) -- (Hn)
                (Ln) -- (In)
                ;
            \draw[edge]
                (Mn) -- (Jn)
                (Mn) -- (Nn)
                (Kn) -- (Nn)
                (Dn) -- (An)
                (Dn) -- (Gn)
                (Fn) -- (Gn)
                (Gn) -- (Kn)
                (An) -- (Bn)
                (Bn) -- (En)
                (En) -- (Fn)
                (En) -- (In)
                (Fn) -- (Jn)
                (In) -- (Jn)
                (Jn) -- (Kn)
                ;
            \node[vertex] at (F) {}; 
        \end{tikzpicture}}}
        \caption[Tropical subdivisions of poset polytopes]{Tropical subdivision of the marked poset polytope from \Cref{ex:vertices} for \(t_r=0\), \(\tfrac 1 2\) and \(1\). For generic \(t\) the vertices of the subdivision coincide with the vertices of the polytope.}
        \label{fig:genmpp-vertices}
    \end{figure}
\end{example}

To prove \Cref{thm:generic-vertices}, we first need a lemma simplifying the description of vertices in \(\TT(P,\lambda)\).
Recall that the tropical hyperplane arrangement introduced in \Cref{sec:tropical} has tropical hyperplanes enumerated by \(R\), the set of all unmarked elements in \(P\) covering at least two other elements.

\begin{lemma} \label{lem:vertex}
    Let \(v\) be a vertex in the tropical subdivision \(\TT(P,\lambda)\) of a marked order polyhedron \(\OO(P,\lambda)\) and denote by \(F\) and \(G\) the minimal faces of \(\OO(P,\lambda)\) and \(\TT(\HH(P,\lambda))\) containing \(v\), respectively, so that \(\{v\}=F\cap G\).
    Denote by \(R_G\) the set of all \(r\in R\) such that \(|\tc(v)_r|\ge 2\) and let 
    \begin{align*}
        G' &= \left\{ \, x\in\R^P \,\middle|\,
            x_q=x_{q'} \text{ for all \(r\in R_G\), \(q,q'\in\tc(v)_r\)}\,\right\}.
    \end{align*}
    Then
    \begin{equation*}
        \{v\} = F\cap G = F \cap G'.
    \end{equation*}
\end{lemma}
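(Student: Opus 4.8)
The plan is to establish the two inclusions separately. The inclusion $F\cap G\subseteq F\cap G'$ is essentially immediate: by definition $G$ is the minimal cell of $\TT(\HH(P,\lambda))$ containing $v$, and being in such a cell means that for every $r\in R$ the signature $\tc(x)_r$ is constant on $\relint G$; in particular, any $x\in G$ satisfies $\tc(v)_r\subseteq\tc(x)_r$ for all $r$. Spelling out what $\tc(x)_r = \argmax_{q\prec r}x_q$ means, the condition $\tc(v)_r\subseteq\tc(x)_r$ forces $x_q = x_{q'}$ for all $q,q'\in\tc(v)_r$ (they are all simultaneously maximizing among $\{x_s : s\prec r\}$, hence equal to each other). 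Restricting to $r\in R_G$ (those $r$ with $|\tc(v)_r|\ge 2$) is harmless because for $r\notin R_G$ the condition is vacuous. Thus $G\subseteq G'$, which gives $F\cap G\subseteq F\cap G'$.

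For the reverse inclusion $F\cap G'\subseteq F\cap G = \{v\}$, the idea is that $G'$ is an affine subspace that locally around $v$ agrees with the affine hull of $G$, so intersecting with $F$ can only add points far from $v$, and then a scaling/convexity argument rules those out. More precisely, since $v$ is a vertex of $F\cap G$ and $G$ is a polyhedral cell, $v$ is the unique point of $F$ in $G$; because $v\in\relint G$ (as $G$ is the \emph{minimal} cell containing $v$) and $G'$ is exactly the affine span of the linear conditions that are tight throughout $\relint G$, we have $G\subseteq G'$ and $\aff(G) = G'$, so $G' \cap B = G\cap B$ for a small ball $B$ around $v$. Hence $F\cap G'$ coincides with $\{v\}$ in a neighborhood of $v$. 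To promote this to a global statement, suppose $w\in F\cap G'$ with $w\neq v$. I would show that the whole segment $[v,w]$ lies in $F$ (since $F$ is convex) and in $G'$ (since $G'$ is affine), so by the local statement a small initial portion of this segment lies in $G$; but then the point $w$ itself lies in $G$ as well, because being in $G$ along the segment is a closed and (by the signature description) ``convex-along-the-segment'' condition --- concretely, the inequalities cutting out $G$ from $G'$, namely $x_q \ge x_s$ for $q\in\tc(v)_r$ and $s\prec r$, are linear, so the set of points of the segment satisfying them is an interval, and it contains an initial piece of positive length, hence it is all of $[v,w]$ once we also know $w\in G'$. Therefore $w\in F\cap G = \{v\}$, a contradiction.

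The step I expect to be the main obstacle is making the transition from ``local coincidence near $v$'' to the global conclusion fully rigorous, i.e.\ arguing carefully that $G' \cap F$ cannot escape $G$. The subtlety is that $G'$ is genuinely larger than $G$ (it drops the inequalities $x_q\ge x_s$ for $s\prec r$, $s\notin\tc(v)_r$), so a priori $F\cap G'$ could contain points where some previously non-maximizing coordinate $x_s$ has caught up with or overtaken the $\tc(v)_r$-block. The clean way around this is precisely the segment argument above: on the segment from $v$ to any $w\in F\cap G'$, each relevant inequality $x_q - x_s \ge 0$ is an affine function of the segment parameter, nonnegative at the $v$-end with slack (since $v\in\relint G$), so it stays nonnegative at least on an initial subinterval; and since the constraints defining $G$ inside $G'$ are finitely many such affine inequalities, one shows the full segment lands in $G$, whence $w\in F\cap G$. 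This pins down $w=v$. I would also remark that nothing here uses boundedness, so the statement holds for polyhedra, and that $F$ and $G$ being the \emph{minimal} faces containing $v$ is used in two places: to guarantee $v\in\relint G$ (so the slack exists) and to guarantee $\{v\}=F\cap G$ in the first place.
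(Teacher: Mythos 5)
Your overall strategy matches the paper's. The paper writes $G=G'\cap H\cap L$ where $H$ and $L$ are cut out by the remaining (in)equalities, observes that $v$ lies in the interior of $H\cap L$, and then uses that $\{v\}$ is simultaneously open and closed in the connected set $F\cap G'$ to conclude $F\cap G'=\{v\}$. Your version packages the same two ingredients --- local agreement $G\cap B=G'\cap B$ near $v$, plus convexity/connectedness of $F\cap G'$ --- into a segment argument, which is a legitimate alternative phrasing.

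There is, however, a genuine error in the final step of your segment argument. You claim that the set $T$ of points on $[v,w]$ lying in $G$ is ``an interval, and it contains an initial piece of positive length, hence it is all of $[v,w]$ once we also know $w\in G'$.'' This implication is false: $T$ is the locus where finitely many affine functions (the slacks of the inequalities cutting $G$ out of $G'$) are nonnegative, so $T$ is a closed subinterval $[v,\,v+c(w-v)]$ with $c>0$, but nothing forces $c=1$; the slacks start strictly positive at $v$ (that is exactly what $v\in\relint G$ gives you) yet may decay to zero before reaching $w$, and $w\in G'$ tells you nothing about them at $w$. Fortunately you never need $w\in G$: the nondegenerate initial piece $[v,\,v+\varepsilon(w-v)]$ already lies in $F$ (as a subsegment of $[v,w]\subseteq F$) and in $G$, hence in $F\cap G=\{v\}$, which is impossible for $\varepsilon>0$, $w\neq v$. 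Replacing the detour through ``$w\in G$'' with this one-line contradiction closes the gap and makes the proof complete.
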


\begin{proof}
    By definition of the tropical subdivision \(\TT(\HH(P,\lambda))\) of \(\R^P\), we have
    \begin{equation*}
        G = \left\{ \, x\in\R^P \,\middle|\, 
            \tc(v)_r \subseteq \tc(x)_r \text{ for all \(r\in R\)} \right\},
    \end{equation*}
    where \(\tc(v)_r\subseteq \tc(x)_r\) is equivalent to \(x_q=x_{q'}\) for \(q,q'\in\tc(v)_r\) and \(x_{q''}\le x_q\) for \(q''\prec r\) with \(q''\notin\tc(v)_r\) and \(q\in\tc(v)_r\).
    Hence, we may write
    \begin{equation*}
        G = G' \cap H \cap L,
    \end{equation*}
    where
    \begin{align*}
        G' &= \left\{ \, x\in\R^P \,\middle|\,
            x_q=x_{q'} \text{ for all \(r\in R_G\), \(q,q'\in\tc(v)_r\)}\,\right\}, \\
        H &= \left\{ \, x\in\R^P \,\middle|\,
            x_{q''}\le x_q \text{ for all \(r\in R_G\), \(q''\prec r\) with \(q''\notin\tc(v)_r\), \(q\in\tc(v)_r\)} \,\right\}, \\
        L &= \left\{ \, x\in\R^P \,\middle|\,
            \tc(v)_r \subseteq \tc(x)_r \text{ for all \(r\in R\setminus R_G\)}\,\right\}.
    \end{align*}
    Since \(v_{q''}<v_q\) for \(q'',q\prec r\) with \(q''\notin\tc(v)_r\) and \(q\in\tc(v)_r\), we know that \(v\) is an interior point of \(H\).
    Since for \(r\notin R_G\) the set \(\tc(v)_r\) has exactly one element, there are no conditions \(x_q=x_{q'}\) for \(q,q'\in\tc(v)_r\) and by the previous argument \(v\) is also an interior point of \(L\).
    Hence, we have
    \begin{equation*}
        \{v\} = (F \cap G') \cap (H\cap L),
    \end{equation*}
    where \(v\) is an interior point of \(H\cap L\).
    Since \(\R^P\) is Hausdorff and \(F\cap G'\) is connected, this implies \(\{v\} = F\cap G'\).
\end{proof}

We are now ready to prove \Cref{thm:generic-vertices}. As a reference on face partitions of marked posets as used in the following proof we refer to \cite[Section~3]{Peg17}.

\begin{proof}[{Proof of \Cref{thm:generic-vertices}}]
    Let \(v\) be a vertex in the tropical subdivision \(\TT(P,\lambda)\) of \(\OO(P,\lambda)\), so that \(\{v\}=F\cap G\), where \(F\) is the minimal face of \(\OO(P,\lambda)\) containing \(v\) and \(G\) is the minimal cell in \(\TT(\HH(P,\lambda))\) containing \(v\).
    Let \(\tc(G)\) be the tropical covector corresponding to \(G\) and denote by \(R_G\) the set of all \(r\in R\) such that \(|\tc(G)_r|\ge 2\).
    In other words, \(R_G\) consists of all \(p\in\tilde P\) such that at least two different \(q\prec p\) maximize \(v_q\).
    Fix \(u\in\smash{[0,1]^{\tilde P}}\) with \(u_p\in (0,1)\) for \(p\in R_G\) and \(u_p=0\) otherwise.

    We claim that \(\varphi_u(v)\) is a vertex of \(\OO_u(P,\lambda)\).
    Since \(u\) is a degeneration of any \(t\in\smash{(0,1)^{\tilde P}}\), we can conclude by \Cref{prop:degeneration} that \(\varphi_t(v)\) is then also a vertex of \(\OO_t(P,\lambda)\) whenever \(t\in\smash{(0,1)^{\tilde P}}\).

    By \Cref{lem:vertex} we have \(\{v\}=F\cap G'\), where \(G'\) is defined by the conditions \(x_q=x_{q'}\) for \(r\in R_G\), \(q,q'\in\tc(v)_r\). 
    Let \(Q\) be the minimal face of \(\OO_u(P,\lambda)\) containing \(\varphi_u(v)\).
    If we can show \(\psi_u(Q)\subseteq F\) and \(\psi_u(Q)\subseteq G'\),  we can conclude that \(Q\) is a single point and hence \(\varphi_u(v)\) a vertex.
    Since \(0\in\smash{[0,1]^{\tilde P}}\) is a degeneration of \(u\), we have \(\psi_u(\relint Q)\subseteq \relint F\) by \Cref{prop:degeneration} and conclude \(\psi_u(Q)\subseteq F\) by taking closures.

    To show that \(\psi_u(Q)\subseteq G'\), let \(r\in R_G\) and \(q\in\tc(G)_r=\tc(v)_r\).
    For \(y\in Q\) with image \(z=\psi_u(y)\) in \(\OO(P,\lambda)\), we will show \(q\dashv_z r\), so \(q\in\tc(z)_r\).
    Hence, we obtain \(\tc(v)_r\subseteq\tc(z)_r\) for \(r\in R_G\) which implies \(z\in G'\).
    Our strategy is as follows: construct a chain \(\Fc\) corresponding to a defining inequality of \(\OO_u(P,\lambda)\) satisfied by \(\varphi_u(v)\) with equality, such that \(q \dashv_v r\) is one of the corresponding conditions on \(v\) in \Cref{prop:ineq-pullback}.
    Since the inequality is satisfied by \(\varphi_u(v)\) with equality, the same holds for \(y\in Q\).
    Again, by \Cref{prop:ineq-pullback}, this implies that \(q \dashv_z r\).

    What remains to be done is constructing the chain \(\Fc\).
    In the following, we need a relation slightly stronger than \(\dashv_x\).
    Let \(a\Dashv_x b\) denote the relation on \(P\) defined by \(b\in R_G\) and \(a\dashv_x b\).
    That is, \(a\Dashv_x b\) holds if and only if \(b\in R_G\), \(a\prec b\) and \(x_a=\max_{q\prec b} x_q\).

    First construct a chain from \(q\) downward to a marked element that is of the kind
    \begin{equation*}
        a \prec \cdots \prec p' \prec p'_1 \Dashv_v \cdots \Dashv_v p'_l = q,
    \end{equation*}
    where \(l\ge 1\) and \(p'_1\notin R_G\).
    That is, walk downwards in \(R_G\) along relations \(\Dashv_v\) as long as possible, then arbitrarily extend the chain to some marked element \(a\in P^*\).
    Let
    \begin{equation*}
        \Fc\colon a\prec \cdots \prec p' \prec p'_1\prec \cdots \prec p'_{l-1}.
    \end{equation*}
    When \(q\) was marked, \(\Fc\) is just the empty chain. When \(q\notin R_G\), we have \(l=1\), \(p'_1=q\) and \(\Fc\) ends in \(p'\).

    Now construct a maximal chain
    \begin{equation*}
        q \Dashv_v r \Dashv_v p_1 \Dashv_v \cdots \Dashv_v p_k,
    \end{equation*}
    where \(k\ge 0\).
    Let \(p_{-1}=q\), \(p_0=r\), and let \(B\in\pi_F\) be the block of the face partition of \(F\) containing \(p_k\).
    We claim that \(B\) can not be a singleton: since \(F\cap G'\) is a point, the conditions imposed by the face partition \(\pi_F\) together with the conditions given by \(G'\) determine all the coordinates, in particular \(x_{p_k}\). However, \(p_k\) is neither marked, since it is an element of \(R_G\subseteq\tilde P\), nor does it appear in one of the equations for \(G'\), since the chain was chosen maximal.
    Hence, the coordinate \(x_{p_k}\) must be determined by \(p_k\) sitting in a non-trivial block with some other coordinate already determined by the conditions imposed by \(\lambda\), \(\pi_F\) and \(G'\).

    If there exists \(p\in B\) with \(p_k\prec p\), let
    \begin{equation*}
        \Fd\colon p_1 \prec \cdots \prec p_k \prec p.
    \end{equation*}
    The chain \(\Fc\prec q\prec r\prec\Fd\) yields a defining inequality for \(\OO_u(P,\lambda)\).
    Since \(u_{p'_1}=0\), while \(u_{p'_2},\dots,u_{p'_l},u_r,u_{p_1},\dots,u_{p_k}>0\) and \(u_p\neq 1\) in case of \(p\in \tilde P\), the describing inequality of \(\OO_u(P,\lambda)\) given by \(\Fc\prec q\prec r\prec\Fd\) is satisfied with equality for some \(\varphi_u(x)\) if and only if
    \begin{align*}
        p'_1 \dashv_x \cdots \dashv_x p'_l = q \dashv_x r \dashv_x p_1 \dashv_x \cdots \dashv_x p_k \quad\text{and}\quad x_{p_k} = x_p.
    \end{align*}
    For \(x=v\), all these conditions are satisfied.
    Hence, they are also satisfied by \(z\).
    In particular \(q\dashv_z r\) as desired.

    If there exists no \(p\in B\) with \(p_k\prec p\), there must be some \(p\in B\) with \(p\prec p_k\), since \(B\) is not a singleton.
    In this case \(v_p=v_{p_k}\) so in particular \(p\dashv_v p_k\).
    Since \(p_{k-1}\dashv_v p_k\) as well, we conclude \(v_{p_{k-1}} = v_p = v_{p_k}\).
    Now let
    \begin{equation*}
        \Fd\colon p_1 \prec \cdots \prec p_k.
    \end{equation*}
    The inequality for \(\OO_u(P,\lambda)\) given by \(\Fc\prec q\prec r\prec\Fd\) is satisfied with equality for \(\varphi_u(x)\) if and only if
    \begin{align*}
        p'_1 \dashv_x \cdots \dashv_x p'_l = q \dashv_x r \dashv_x p_1 \dashv_x \cdots \dashv_x p_{k-1} \quad\text{and}\quad x_{p_{k-1}} = x_{p_k}.
    \end{align*}
    Again, all these conditions hold for \(x=v\), hence also for \(z\) and we can conclude \(q \dashv_z r\) as before.
\end{proof}

We finish this section by the following conjecture.

\begin{conjecture}
For any vertex $v$ in the generic marked poset polytope, there exists a vertex $t_v$ of the hypercube $[0,1]^{\tilde{P}}$ such that the image of $v$ under the degeneration map is a vertex in $\OO_{t_v}(P,\lambda)$. 
\end{conjecture}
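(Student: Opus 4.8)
The plan is to reduce the conjecture to a combinatorial statement about marked chain-order polytopes and then to build the hypercube vertex from the combinatorial data of the given vertex. First I would fix a generic $t\in(0,1)^{\tilde P}$, so that by \Cref{thm:generic-vertices} the given vertex is $v=\varphi_t(\bar v)$ for a vertex $\bar v$ of the tropical subdivision $\TT(P,\lambda)$. Since no coordinate of $t$ lies in $\{0,1\}$, every $t_v\in\{0,1\}^{\tilde P}$ is a degeneration of $t$, so $\dg_{t,t_v}$ exists by \Cref{cor:f-vector-degeneration}, and by \Cref{prop:mpp-degeneration} its underlying point map sends $v$ to $\theta_{t,t_v}(v)=\varphi_{t_v}(\bar v)$; hence $\dg_{t,t_v}(\{v\})$ is the smallest face of $\OO_{t_v}(P,\lambda)=\OO_{C,O}(P,\lambda)$ (with $t_v=\chi_C$) containing $\varphi_{\chi_C}(\bar v)$. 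So it suffices to prove: for every vertex $\bar v$ of $\TT(P,\lambda)$ there is a partition $\tilde P=C\sqcup O$ with $\varphi_{\chi_C}(\bar v)$ a vertex of $\OO_{C,O}(P,\lambda)$.

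For the construction, if $\bar v$ is already a vertex of $\OO(P,\lambda)$ I would take $C=\varnothing$; otherwise $\{\bar v\}=F\cap G'$ as in \Cref{lem:vertex}, with $F$ the minimal face through $\bar v$, face partition $\pi_F$, and $R_G\neq\varnothing$. The natural candidate is $C=\{\,p\in\tilde P:\bar v_p=\max_{q\prec p}\bar v_q\,\}$, i.e.\ the set of $p$ with $\varphi_{\chi_C}(\bar v)_p=0$. The inequalities $0\le x_p$ of $\OO_{C,O}$ for $p\in C$ (\Cref{prop:mcop}) then give $|C|$ independent tight constraints at $\varphi_{\chi_C}(\bar v)$, and for the remaining coordinates one reads off from \Cref{prop:ineq-pullback} (with $t=\chi_C$) that a chain inequality $x_{q_1}+\dots+x_{q_s}\le x_b-x_a$ of $\OO_{C,O}$, coming from a chain $a\prec q_1\prec\dots\prec q_s\prec b$ with $a,b\in P^*\sqcup O$ and all $q_i\in C$, is tight at $\varphi_{\chi_C}(\bar v)$ exactly when $\bar v_a=\bar v_{q_1}=\dots=\bar v_{q_s}=\bar v_b$; substituting the known values $x_{q_i}=0$, each such tight inequality becomes the equation $x_a=x_b$. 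So $\varphi_{\chi_C}(\bar v)$ is a vertex of $\OO_{C,O}$ iff these equations, together with $x_a=\lambda(a)$ for $a\in P^*$, determine every coordinate indexed by $O$. To prove this I would use that $\bar v$ is the unique point of $F\cap G'$: since all defining relations of $F\cap G'$ are of the form ``coordinate $=$ coordinate'' or ``coordinate $=$ constant'', every coordinate of $\bar v$ lies in the image of $\lambda$, and $\bar v$ is pinned by the block equalities of $\pi_F$ together with the constancy of $\bar v$ on the sets $\tc(\bar v)_r$, $r\in R_G$; using that blocks of face partitions of marked order polytopes are connected in the Hasse diagram (\cite[Section~3]{Peg17}), the $\pi_F$-equalities can be routed through chains inside a block, and each $\bar v_p$ with $p\in O$ can be connected to a marked element of the same value by a chain whose intermediate elements all lie in $C$.

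The hard part will be this last step. In \Cref{thm:generic-vertices} the parameters on $R_G$ could be taken generic in $(0,1)$, which makes the certifying $\Dashv_{\bar v}$-chains behave uniformly; with the parameters pinned to $\{0,1\}$ the two clauses of \Cref{prop:ineq-pullback} behave completely differently. Concretely, for $r\in R_G$ with $\bar v_r>\max_{q\prec r}\bar v_q$ (so $r\in O$) the relation $q\Dashv_{\bar v}r$ is witnessed by no chain stopping at $r$; and if one tries to carry it through a chain passing above $r$ to a non-singleton block of $\pi_F$, as in \Cref{thm:generic-vertices}, the top of that chain is forced to satisfy $x_p=\max_{q\prec p}x_q$ and hence lies in $C$, so clause (1) of \Cref{prop:ineq-pullback} applies and the corresponding facet no longer records the chain. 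Checking that the equalities $\bar v_q=\bar v_{q'}$ for $q,q'\in\tc(\bar v)_r$ are nevertheless forced—for instance because $q$ and $q'$ each connect to a marked element of the right value—is where a genuine case analysis is needed, and it is quite possible that the naive $C$ above must be refined, or that one has to argue globally, say by pushing $t$ to the boundary of the hypercube one coordinate at a time and showing via \Cref{cor:f-vector-degeneration} and \Cref{prop:degeneration} that at each step one of the two moves keeps the image of $v$ a vertex. This gap is presumably why the statement is only stated as a conjecture.
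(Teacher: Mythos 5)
This statement is stated in the paper as a \emph{conjecture} immediately after \Cref{thm:generic-vertices}; the authors offer no proof, so there is nothing in the paper to compare your argument against. You correctly recognize this at the end of your write-up, so you are not claiming more than you have.

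That said, your reduction is sound and worth recording. For generic $t\in(0,1)^{\tilde P}$, every $t_v\in\{0,1\}^{\tilde P}$ is indeed a degeneration of $t$, and $\theta_{t,t_v}(v)=\varphi_{t_v}(\bar v)$ when $v=\varphi_t(\bar v)$; since $\dg_{t,t_v}(\{v\})$ is by definition the minimal face of $\OO_{t_v}(P,\lambda)$ containing $\theta_{t,t_v}(v)$ in its relative interior, the conjecture does become exactly the statement: for every vertex $\bar v$ of the tropical subdivision $\TT(P,\lambda)$ there exists a partition $\tilde P=C\sqcup O$ with $\varphi_{\chi_C}(\bar v)$ a vertex of $\OO_{C,O}(P,\lambda)$. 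Your candidate $C=\{p\in\tilde P:\bar v_p=\max_{q\prec p}\bar v_q\}$ is the natural first guess (it is the set of coordinates set to $0$ by $\varphi_{\chi_C}$), and the obstruction you isolate is the genuine one: once parameters are pushed to $\{0,1\}$, the two clauses of \Cref{prop:ineq-pullback} behave very differently, and the certifying chains used in the proof of \Cref{thm:generic-vertices} (through $\Dashv_{\bar v}$-maximal elements into non-singleton blocks of $\pi_F$) may no longer produce tight facet inequalities that record the equalities $\bar v_q=\bar v_{q'}$ for $q,q'\in\tc(\bar v)_r$. You are right that this is where an actual argument would have to do real work, and that the naive $C$ may need to be refined, possibly by a coordinate-by-coordinate descent through the hypercube using \Cref{cor:f-vector-degeneration}. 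Since the paper leaves this open, your proposal is a reasonable assessment of the state of the problem rather than a proof, and it matches the authors' own stance.
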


\section{Poset Transformations}
\label{sec:poset-transforms}

Since having a strict or even regular marking already played an essential role in the theory of marked order polyhedra, it is a natural question to ask whether we can apply the poset transformation used in \cite{Peg17} and still obtain the same marked poset polyhedra up to affine equivalence for arbitrary \(t\in\smash{[0,1]^{\tilde P}}\).
In this section we show that the answer is positive: modifying a marked poset to be strictly marked and modifying a strictly marked poset to be regular does not change the affine isomorphism type.

Recall from \cite{Peg17} that a \emph{constant interval} in a marked poset $(P,\lambda)$ is an interval $[a,b]$ such that $a,b\in P^*$ are marked with $\lambda(a)=\lambda(b)$. The consequence is that for any point $x$ in the associated marked order polyhedron $\OO(P,\lambda)$ we have $x_p=\lambda(a)=\lambda(b)$ whenever $p\in[a,b]$.

\begin{proposition}
    Contracting constant intervals in \((P,\lambda)\) yields a strictly marked poset \((P/\pi,\lambda/\pi)\) such that \(\OO_{t'}(P/\pi,\lambda/\pi)\) is affinely isomorphic to \(\OO_t(P,\lambda)\) for all \(t\in\smash{[0,1]^{\tilde P}}\), where $t'$ is the restriction of $t$ to elements not contained in any non-trivial constant intervals.
\end{proposition}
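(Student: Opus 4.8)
The plan is to realize the desired affine isomorphism as a coordinate projection, and to control it by lifting through the parametrized transfer maps of \Cref{thm:univ-transfer}. Write $N\subseteq P$ for the union of all non-trivial constant intervals, so that the contraction partition $\pi$ consists of the singletons $\{p\}$ for $p\in P\setminus N$ together with the constant-interval components partitioning $N$; each such component $B$ carries a well-defined value $c_B\in\R$, since $\OO(P,\lambda)$ is non-empty and every one of its points is constantly equal to $c_B$ on $B$. By \cite[Proposition~3.9]{Peg17} the quotient $(P/\pi,\lambda/\pi)$ is a strictly marked poset whose marked elements are the restricted blocks of $\pi$, and one checks that all minimal elements of $P/\pi$ are marked, so that \Cref{def:mpp} applies to it. Identifying $\widetilde{P/\pi}$ with $\tilde P\setminus N$, the parameter appearing in the statement is $t'=t|_{\tilde P\setminus N}$, and I write $\Pi\colon\R^{\tilde P}\to\R^{\tilde P\setminus N}$ for the coordinate projection.

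The base case $t=\mathbf 0$ is essentially \cite[Proposition~3.11]{Peg17}: the inclusion $\OO(P/\pi,\lambda/\pi)\hookrightarrow\OO(P,\lambda)$ induced by the quotient map is onto, because any point of $\OO(P,\lambda)$ is constant on the blocks of $\pi$ (the non-trivial ones being forced to the values $c_B$), so it is an affine isomorphism with inverse $\Pi|_{\OO(P,\lambda)}$. For general $t$ I would first record the computation that keeps the contracted coordinates pinned: for $x\in\OO(P,\lambda)$ and $p\in\tilde P\cap N$ one has $\varphi_t(x)_p=(1-t_p)c_{B(p)}$, where $B(p)$ denotes the block of $p$. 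Indeed $x_p=c_{B(p)}$; a saturated chain running from the bottom of a constant interval containing $p$ up to $p$ (such an interval exists because $p$ is unmarked and hence lies strictly inside one) shows $\max_{q\prec p}x_q\ge c_{B(p)}$, while $x_q\le x_p=c_{B(p)}$ for every $q\prec p$ gives the reverse inequality. Hence $\OO_t(P,\lambda)=\varphi_t(\OO(P,\lambda))$ lies in the affine subspace $A_t=\{x\in\R^{\tilde P}\mid x_p=(1-t_p)c_{B(p)}\text{ for all }p\in\tilde P\cap N\}$, and $\Pi$ restricts to an affine isomorphism of $A_t$ onto $\R^{\tilde P\setminus N}$; in particular $\Pi$ is injective on $\OO_t(P,\lambda)$.

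The heart of the argument is then the commutativity of the square
\[
\begin{tikzcd}
\OO(P,\lambda) \arrow[r, "\varphi_t"] \arrow[d, "\Pi"'] & \OO_t(P,\lambda) \arrow[d, "\Pi"] \\
\OO(P/\pi,\lambda/\pi) \arrow[r, "\varphi_{t'}"'] & \OO_{t'}(P/\pi,\lambda/\pi).
\end{tikzcd}
\]
For $p\in\tilde P\setminus N$, commutativity reduces, using $t'_{\{p\}}=t_p$ and $(\Pi x)_{\{p\}}=x_p$, to the equality $\max_{q\prec p}x_q=\max_{\bar q\prec\{p\}}(\Pi x)_{\bar q}$. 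On both marked order polyhedra the order inequalities allow replacing the maximum over covers by the maximum over all strictly smaller elements; since the point downstairs is constant on each block and, the blocks being order-convex, $\bar q<_{P/\pi}\{p\}$ holds exactly when some representative of $\bar q$ lies strictly below $p$ in $P$, both sides equal $\max_{q<_Pp}x_q$. With commutativity in hand, the left vertical map is bijective by the base case and the horizontal maps are bijective by \Cref{thm:univ-transfer}, so $\Pi(\OO_t(P,\lambda))=\varphi_{t'}(\OO(P/\pi,\lambda/\pi))=\OO_{t'}(P/\pi,\lambda/\pi)$; being injective on $\OO_t(P,\lambda)$ and the restriction of the affine isomorphism $A_t\isoto\R^{\tilde P\setminus N}$, this $\Pi$ is the asserted affine isomorphism.

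I expect the genuine difficulty to be the poset-theoretic bookkeeping rather than the geometry: verifying that $\pi$ is a compatible partition with a strictly and minimally marked quotient in the sense of \cite{Peg17}, and pinning down the covering relations of $P/\pi$ precisely enough for the two maxima in the commutativity step to agree. Once these are settled, the coordinate computation $\varphi_t(x)_p=(1-t_p)c_{B(p)}$ and the passage from a commuting square of bijections to an affine isomorphism are routine.
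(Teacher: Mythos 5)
Your proof is correct and takes essentially the same approach as the paper: both rest on the computation $\varphi_t(x)_p=(1-t_p)c_{B(p)}$ for unmarked $p$ in a contracted block and on a commuting square relating the transfer maps $\varphi_t$ and $\varphi_{t'}$ through the quotient. The only cosmetic difference is that you realize the isomorphism via the coordinate projection $\Pi$, whereas the paper constructs its affine inverse $\gamma\colon\R^{\tilde P'}\to\R^{\tilde P}$ directly and checks the same square in the opposite direction.
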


\begin{proof}
    Let \((P',\lambda')\) be the strictly marked poset obtained from \((P,\lambda)\) by contracting constant intervals.
    Hence, \(P'\) is obtained from \(P\) by taking the quotient under the equivalence relation generated by \(a\sim p\) and \(p\sim b\) whenever \(a\le b\) are marked elements such that \(\lambda(a)=\lambda(b)\) and \(a\le p\le b\).
    The elements of $P'$ are either singletons $\{p\}$ for $p$ not contained in any non-trivial constant interval or non-trivial blocks $B$ that are unions of non-trivial constant intervals.
    All non-trivial blocks $B$ are marked and among the singletons $\{p\}$ only those with $p\in \tilde P$ are unmarked.

    By \cite[Proposition 3.18]{Peg17}, we have an \(q^*\colon \OO(P',\lambda')\to\OO(P,\lambda)\) induced by the quotient map \(q\colon (P,\lambda)\to(P',\lambda')\).
    Now consider the two transfer maps \(\varphi_t\colon\OO(P,\lambda)\to\OO_t(P,\lambda)\) and \(\varphi'_{t'}\colon\OO(P',\lambda')\to\OO_{t'}(P',\lambda')\).
    When \(B\) is a non-trivial block in \(P'\)---in other words an equivalence class with at least two elements---we have
    \begin{equation*}
        \varphi_t(x)_p = (1-t_p)\lambda'(B)
    \end{equation*}
    for all unmarked \(p\in B\) and \(x\in\OO(P,\lambda)\).
    When \(p\) is an unmarked element outside of constant intervals, we have \(\varphi_t(q^*(x))_p = \varphi'_{t'}(x)_{\{p\}}\) for all $x\in\OO(P',\lambda')$ by construction.
    Hence, the affine map \(\gamma\colon \R^{\tilde P'}\to\R^{\tilde P}\) defined by
    \begin{equation*}
        \gamma(x)_p = 
        \begin{cases}
            (1-t_p)\lambda'(B) &\text{if \(p\in \tilde P\cap B\) for a non-trivial block \(B\),} \\
            x_{\{p\}} &\text{otherwise} \\
        \end{cases}
    \end{equation*}
    restricts to an affine map \(\OO_{t'}(P',\lambda')\to\OO_t(P,\lambda)\), such that the diagram
    \begin{equation*}
        \begin{tikzcd}
            \OO(P',\lambda') \arrow[r,"q^*"] \arrow[d, "\varphi'_{t'}"] & \OO(P,\lambda) \arrow[d,"\varphi_t"]\\
            \OO_{t'}(P',\lambda') \arrow[r,"\gamma"] & \OO_t(P,\lambda)
        \end{tikzcd}
    \end{equation*}
    commutes.
    Thus, it is an affine isomorphism.
    Note that we used the projected polyhedra and transfer maps in the above diagram.
\end{proof}

Recall from \cite{Peg17} that a covering relation $p\prec q$ in $(P,\lambda)$ is called \emph{non-redundant} if for all marked elements $a,b\in P^*$ with with $a\le q$ and $p\le b$, we have $a=b$ or $\lambda(a)<\lambda(b)$. A marked poset is called \emph{regular} if all its covering relations are non-redundant.

\begin{proposition}
    If \((P,\lambda)\) is strictly marked, removing a redundant covering relation yields a marked poset \((P',\lambda)\) such that \(\OO_t(P,\lambda)=\OO_t(P',\lambda)\) for all \(t\in\smash{[0,1]^{\tilde P}}\).
\end{proposition}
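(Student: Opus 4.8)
The plan is to reduce everything to the parametrized transfer maps of \Cref{thm:univ-transfer}. Write $u\prec v$ for the redundant covering relation that is removed, so that $P'$ has the same underlying set and the same marked set $P^*$ as $P$; in particular $P$ and $P'$ have the same unmarked elements, hence the two families are parametrized by the same hypercube $[0,1]^{\tilde P}$. Before anything else one should record that $(P',\lambda)$ is again an admissible marked poset, i.e.\ that all minimal elements of $P'$ are still marked: the only element that can lose all of its lower covers is $v$, and if $v\in\tilde P$ covered only $u$ in $P$ then everything strictly below $v$ would lie below $u$, so any marked $a$ with $a<v$ would satisfy $a\le u$; together with a marked $b\ge u$ and $\lambda(a)\ge\lambda(b)$ this contradicts strictness of the marking, and $a=v$ is impossible since $v$ is unmarked. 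Hence $v$ still covers an element in $P'$. Finally, the statement for $t=0$ is exactly $\OO(P,\lambda)=\OO(P',\lambda)$, the invariance of the marked order polyhedron under removal of redundant covering relations, which is \cite{Peg17}.

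Granting $\OO(P,\lambda)=\OO(P',\lambda)$, it suffices to show that the transfer maps $\varphi_t$ of $(P,\lambda)$ and $\varphi'_t$ of $(P',\lambda)$ agree on this common set: then \Cref{thm:univ-transfer}, applied to both marked posets, gives
\[
    \OO_t(P,\lambda)=\varphi_t\bigl(\OO(P,\lambda)\bigr)=\varphi'_t\bigl(\OO(P',\lambda)\bigr)=\OO_t(P',\lambda).
\]
The two maps agree by definition on coordinates in $P^*$, so fix $p\in\tilde P$. Since $\le'\,\subseteq\,\le$, every point of $\OO(P,\lambda)$ is order preserving for both orders, so $\max_{q\prec p}x_q=\max_{q<p}x_q$ and $\max_{q\prec' p}x_q=\max_{q<'p}x_q$, both maxima over non-empty sets because $p\in\tilde P$ is minimal in neither $P$ nor $P'$ (the latter by the remark above). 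Hence it remains to prove
\[
    \max_{q<p}x_q=\max_{q<'p}x_q\qquad\text{for all }x\in\OO(P,\lambda).
\]

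The inequality ``$\ge$'' is immediate. For ``$\le$'', observe that an element $q$ with $q<p$ but $q\not<'p$ must satisfy $q\le u$ and $v\le p$, because every saturated $\le$-chain from $q$ to $p$ has to use the deleted step $u\prec v$; in particular $x_q\le x_u$. So the point is to produce a $c$ with $c<'p$ and $x_u\le x_c$. If $p\ne v$, then $v<p$ already forces $v<'p$ (an upward chain starting at $v$ cannot use $u\prec v$), and $x_u\le x_v$ does the job. If $p=v$, one invokes the redundancy: choosing marked elements $a\ne b$ with $a\le v$, $u\le b$ and $\lambda(a)\ge\lambda(b)$, strictness of the marking forces $a\not\le u$, so a saturated chain from $a$ up to $v$ has an element $c$ just below $v$ with $c\ne u$; then $c\prec' v$, the initial segment of the chain lies in $P'$, so $a\le'c$, and $x_u\le x_b=\lambda(b)\le\lambda(a)=x_a\le x_c$.

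The substantive part is this last bookkeeping: identifying precisely which order relations survive the deletion (the recurring observation being that a chain cannot route through $u\prec v$ unless it reaches $u$ from below and leaves $v$ from above), the separate treatment of $p=v$, and the use of strictness to pin down the auxiliary element $c$. I do not expect the bounded (polytope) version to need anything new, and the reduction to the transfer maps together with the passage from $\OO$ to $\OO_t$ is purely formal once these relations are understood.
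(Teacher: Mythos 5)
Your proof is correct and follows essentially the same route as the paper: reduce to showing the transfer maps $\varphi_t$ and $\varphi'_t$ agree on the common marked order polyhedron, and then use strictness of $\lambda$ together with the redundancy witnesses $a,b$ to produce a cover $c\neq u$ of $v$ with $x_u\le x_c$, which is exactly the paper's chain $x_{p'}\ge\lambda(a)\ge\lambda(b)\ge x_p$ in your notation. Your extra bookkeeping (checking $P'$ remains admissible, and rewriting the maxima over $<$ rather than over covering relations) is sound but somewhat more verbose than the paper's direct observation that the covering relations of $P'$ are those of $P$ with $u\prec v$ deleted, so the two transfer maps can only differ at the $v$-coordinate.
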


\begin{proof}
    Let \(p\prec q\) be a redundant covering relation in \(P\).
    That is, there are marked elements \(a\neq b\) satisfying \(a\le q\), \(p\le b\) and \(\lambda(a)\ge \lambda(b)\).
    Let \(P'\) be obtained from \(P\) be removing the covering relation \(p\prec q\).

    Comparing the transfer maps \(\varphi_t\) and \(\varphi_t'\) associated to \((P,\lambda)\) and \((P',\lambda)\) defined on the same marked order polyhedron \(\OO(P,\lambda)=\OO(P',\lambda)\) by \cite[Proposition 3.24]{Peg17}, we see that they can only differ in the \(q\)-coordinate, which can only happen when \(q\) is unmarked.
    To be precise,
    \begin{equation*}
        \varphi_t(x)_q = x_q - t_q \max_{q'\prec q} x_{q'} \qquad\text{and}\qquad
        \varphi_t'(x)_q = x_q - t_q \max_{\substack{q'\prec q\\q'\neq p}} x_{q'}.
    \end{equation*}

    Since \(\lambda\) is strict, we can not have \(a\le p\).
    Otherwise we had \(a<b\) in contradiction to \(\lambda(a)\ge \lambda(b)\). Hence, when \(q\) is unmarked, there is a \(p'\neq p\) such that \(a\le p'\prec q\).
    For all \(x\in\OO(P',\lambda)=\OO(P,\lambda)\) we have
    \begin{equation*}
        x_{p'} \ge \lambda(a) \ge \lambda(b) \ge x_p
    \end{equation*}
    and excluding \(p\) from the maximum does not change the transfer map at all. We conclude that
    \begin{equation*}
        \OO_t(P',\lambda) = \varphi_t'(\OO(P',\lambda)) = \varphi_t(\OO(P,\lambda)) = \OO_t(P,\lambda). \qedhere
    \end{equation*}
\end{proof}

Using the above transformations, we can always replace a marked poset \((P,\lambda)\) by a regular marked poset \((P',\lambda')\) yielding affinely equivalent marked poset polyhedra.

\begin{remark}
    If $(P,\lambda)$ is integrally marked and $t\in\{0,1\}^{\tilde P}$, the above constructions actually yield unimodular isomorphisms.
\end{remark}

\section{Facets and the Hibi--Li Conjecture}\label{sec:facetHL}

In \cite{Peg17} it is proved that regular marked posets yield a one-to-one correspondence of covering relations in \((P,\lambda)\) and facets of \(\OO(P,\lambda)\).
We strongly believe the same regularity condition implies that both the inequalities in \Cref{def:mpp} for \(t\in\smash{(0,1)^{\tilde P}}\) and the inequalities in \Cref{prop:mcop} for all partitions \(\tilde{P}=C\sqcup O\)---i.e., all \(t\in\smash{\{0,1\}^{\tilde P}}\)---correspond to the facets of the described polyhedra.
In fact, we can show that the latter implies the former and the conjecture is true for certain \emph{ranked} marked posets.

\begin{definition} \label{def:tame}
    A marked poset \((P,\lambda)\) is called \emph{tame} if the inequalities given in \Cref{prop:mcop} correspond to the facets of \(\OO_{C,O}(P,\lambda)\) for all partitions \(\tilde P=C\sqcup O\).
\end{definition}

\begingroup
\makeatletter
\apptocmd{\theconjecture}{\unless\ifx\protect\@unexpandable@protect\protect\footnotemark\fi}{}{}
\makeatother
\begin{conjecture} \label{conj:regular-tame}
    \footnotetext{In \cite[Proposition~4.5]{FF16}, the statement of \Cref{conj:regular-tame} is given without proof for the case of admissible partitions and bounded polyhedra.}
    A marked poset \((P,\lambda)\) is tame if and only if it is regular.
\end{conjecture}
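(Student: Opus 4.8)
The plan is to prove the two implications separately; the implication \emph{tame $\Rightarrow$ regular} is immediate from the theory of marked order polyhedra, while \emph{regular $\Rightarrow$ tame} is the substantial one. For the first, I would argue by contraposition: if $(P,\lambda)$ is not regular, specialize \Cref{def:tame} to the partition $C=\varnothing$, $O=\tilde P$, so that $\OO_{\varnothing,\tilde P}(P,\lambda)=\OO(P,\lambda)$ and the inequalities of \Cref{prop:mcop} are exactly the covering-relation inequalities $x_p\le x_q$. By the characterization of facets of marked order polyhedra in \cite{Peg17} these inequalities are in bijection with the facets precisely when $(P,\lambda)$ is regular; equivalently, the proposition on removing redundant covering relations in \Cref{sec:poset-transforms}, applied at $t=\mathbf 0$, shows a redundant covering relation contributes no facet. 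Hence $(P,\lambda)$ is not tame.

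For the converse I would fix a regular marked poset $(P,\lambda)$ --- note that regular marked posets are automatically strictly marked, since a nontrivial constant interval would make some covering relation redundant --- and a partition $\tilde P=C\sqcup O$; write $Q=\OO_{C,O}(P,\lambda)$ and let $\varphi=\varphi_{\chi_C}\colon\OO(P,\lambda)\to Q$ be the transfer map of \Cref{thm:univ-transfer}, a piecewise-linear homeomorphism. Since the inequalities of \Cref{prop:mcop} already cut out $Q$ completely, tameness is equivalent to: each such inequality $I$ defines a facet $F_I$ of $Q$, and distinct inequalities define distinct facets. Because $\varphi$ is a homeomorphism, $\dim F_I=\dim\varphi^{-1}(F_I)$, and \Cref{prop:ineq-pullback} at $t=\chi_C$ exhibits $\varphi^{-1}(F_I)$ as an explicit polyhedron inside $\OO(P,\lambda)$: for $I$ of the form $0\le x_p$ with $p\in C$ it equals $\bigcup_{q\prec p}\{x\in\OO(P,\lambda):x_q=x_p\}$, a union of facets of $\OO(P,\lambda)$ by regularity and hence of dimension $\dim\OO(P,\lambda)-1$; for the inequality coming from a saturated chain $a\prec q_1\prec\cdots\prec q_m\prec b$ with $a,b\in P^*\sqcup O$, all $q_i\in C$ and $m\ge 1$, it equals
\begin{equation*}
    \varphi^{-1}(F_I)=\left\{\,x\in\OO(P,\lambda)\ \middle|\ x_b=x_{q_m},\ a\dashv_x q_1\dashv_x\cdots\dashv_x q_m\,\right\},
\end{equation*}
and for $m=0$ it is the facet $\{x\in\OO(P,\lambda):x_a=x_b\}$. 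The distinctness of the corresponding facets I would obtain from the combinatorial fact that in a regular marked poset no element covers, or is covered by, two distinct marked elements --- otherwise the non-redundancy of the two covering relations would simultaneously force $\lambda(a_1)<\lambda(a_2)$ and $\lambda(a_2)<\lambda(a_1)$ --- together with strictness of the marking: one checks that the defining functional of a \Cref{prop:mcop} inequality is determined, on the affine hull of $Q$, by its support, which pins down the $q_i$ and then, via the preceding fact, the endpoints $a$ and $b$.

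The crux --- and the step I expect to be the main obstacle --- is the dimension estimate for a chain inequality with $m\ge 1$, namely $\dim\varphi^{-1}(F_I)=\dim\OO(P,\lambda)-1$. By regularity the covering relation $q_m\prec b$ already contributes a facet $F'=\{x\in\OO(P,\lambda):x_{q_m}=x_b\}$, so it suffices to produce a point $x^\star$ in the relative interior of $F'$ at which all the maximizing conditions $a\dashv_{x^\star}q_1,\ q_1\dashv_{x^\star}q_2,\ \dots,\ q_{m-1}\dashv_{x^\star}q_m$ hold \emph{strictly}, i.e.\ $x^\star_{q'}<x^\star_{q_{i-1}}$ for every $q'\prec q_i$ with $q'\neq q_{i-1}$ (reading $q_0=a$): a relatively open neighbourhood of such $x^\star$ in $F'$ then lies in $\varphi^{-1}(F_I)$, forcing codimension one. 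For each individual strict inequality, regularity is exactly what is needed: a comparison $x_{q_{i-1}}\le x_{q'}$ forced on $F'$ would, since $q'$ lies strictly below $q_i\le q_m$ and the identification $x_{q_m}=x_b$ is therefore of no help, reduce to a comparison in the poset refined by the marking, from which one extracts non-redundancy witnesses for the covering relation $q_{i-1}\prec q_i$ (or $a\prec q_1$ when $i=1$) contradicting regularity. The genuinely difficult point is to satisfy \emph{all} of these strict inequalities simultaneously by a single point of $\relint F'$, compatibly with the marking equalities and with $x_{q_m}=x_b$ --- individual feasibility need not imply joint feasibility. For \emph{ranked} marked posets one can build $x^\star$ rank by rank, assigning values that increase strictly along the rank function and perturbing within each rank so that each prescribed predecessor becomes the unique maximizer, with regularity guaranteeing consistency with the remaining constraints; this is what \Cref{sec:facetHL} carries out. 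Extending this construction --- or finding a more conceptual argument for joint feasibility --- to arbitrary regular marked posets is precisely what leaves the statement a conjecture.
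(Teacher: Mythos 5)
The statement is explicitly a \emph{conjecture} in the paper, and you rightly treat it as such: your write-up reproduces the state of the art rather than claiming a full proof. Your sketch of tame~$\Rightarrow$~regular is exactly the paper's one-line remark (non-regularity means either a non-strict marking with a constant interval or a redundant covering relation, and in either case $\OO(P,\lambda)=\OO_{\varnothing,\tilde P}(P,\lambda)$ already witnesses failure of tameness). Your observation that no element in a regular marked poset covers, or is covered by, two distinct marked elements is correct and is a useful fact, though the paper does not invoke it.

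For the substantial direction your framework differs from the paper's in an instructive way. You pull each inequality back through $\varphi=\varphi_{\chi_C}$ to a locus in $\OO(P,\lambda)$ and reduce everything to finding a point of $\relint F'$ (for $F'$ the regular facet $\{x_{q_m}=x_b\}$) at which the remaining maximizer conditions hold \emph{strictly}; you then correctly flag simultaneous feasibility of those strict inequalities as the sticking point. The paper instead avoids the pullback entirely. Its \Cref{lem:chain} works directly in the chain polyhedron $\CC(P,\lambda)$: given a chain $\Fc$ none of whose covering relations are redundant, an explicit point is built by slicing $\Fc$ into strata $Z_1,\dots,Z_{k-1}$ according to the marking levels and assigning to $p\in\Fc$ the weight $\sum_{i: p\in Z_i}(\lambda_{i+1}-\lambda_i)/|Z_i|$ and to $p\notin\Fc$ a small $\varepsilon$; checking that every competing chain inequality is strict is the joint-feasibility argument in disguise, but it is done all at once rather than one covering relation at a time. \Cref{prop:ranked-tame} then reduces the general $\OO_{C,O}$ case to this lemma by \emph{extending} $\lambda$ to a marking $\lambda'$ on $P^*\sqcup O$, chosen so that the given chain has no $\lambda'$-redundant covering relations; it is precisely this marking extension, not the point construction, that needs the rank function (to place the new values $\lambda'(p)$ consistently between the $\xi_{\rk p}$ and $\xi_{\rk p+1}$). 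So the real open problem is not ``building $x^\star$ rank by rank'' but constructing such a compatible marking extension for arbitrary regular marked posets; the paper says as much in the remark after \Cref{prop:ranked-tame}, and your diagnosis of the obstruction, while phrased differently, lands in the same place.
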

\endgroup

We know that regularity is a necessary condition for being tame, since otherwise \((P,\lambda)\) either contains non-trivial constant intervals and the covering relations in those do not correspond to facets of \(\OO(P,\lambda)\) or the marking is strict but there are redundant covering relations that do not correspond to facets of \(\OO(P,\lambda)\).

We start by considering marked chain polyhedra.
We can show that any chain in \((P,\lambda)\) that does not contain redundant covering relations defines a facet of \(\CC(P,\lambda)\).

\begin{lemma} \label{lem:chain}
    Let \((P,\lambda)\) be a marked poset and \(\Fc\colon a \prec p_1 \prec p_2 \prec \cdots \prec p_r \prec b\) be a saturated chain between elements \(a,b\in P^*\) with all \(p_i\in \tilde P\) and \(r\ge 1\).
    If none of the covering relations in \(\Fc\) are redundant, the inequality
    \begin{equation} \label{eq:chaineq-goal}
        x_{p_1} + \cdots + x_{p_r} \le x_b - x_a
    \end{equation}
    is not redundant in the description of \(\CC(P,\lambda)=\OO_{\tilde P,\varnothing}(P,\lambda)\) given in \Cref{prop:mcop}.
\end{lemma}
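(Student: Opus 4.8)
The standard way to show an inequality is non-redundant is to exhibit a point of the polyhedron that satisfies this inequality with equality but lies strictly inside all the other defining inequalities of \Cref{prop:mcop} (with $C=\tilde P$, $O=\varnothing$), and then argue the described face has codimension exactly one, equivalently that the point actually lies in the relative interior of a facet. It is cleanest to do this via the transfer map: produce a point $x\in\OO(P,\lambda)$ such that $y=\varphi_{\mathbf 1}(x)\in\CC(P,\lambda)$ makes \eqref{eq:chaineq-goal} tight while being as generic as possible with respect to everything else. By \Cref{prop:ineq-pullback} (specialized to $t_p=1$ for all $p$, so case (1) applies and the chain inequality $0\le y_p$ and the chain-sum inequalities are governed by the relation $\dashv_x$), tightness and slackness of the defining inequalities of $\CC(P,\lambda)$ translate into combinatorial conditions on $x\in\OO(P,\lambda)$ involving $\dashv_x$ and the marking.

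**Key steps.** First I would reduce to the strictly marked, in fact regular, situation: the hypothesis is that no covering relation \emph{in $\Fc$} is redundant, but to control the ambient description I want the whole poset to behave well. Actually one should \emph{not} globally assume regularity here, so instead I would work directly with the chain $\Fc$ and build the test point by hand. Fix a small $\varepsilon>0$. The idea is to choose $x\in\OO(P,\lambda)$ that is ``tight along $\Fc$'' and ``slack everywhere else''. Concretely: put $x$ equal to $\lambda(a)$ on all marked elements forced to that value and increase linearly along $\Fc$ in steps so that $x_{p_i}=x_{p_{i-1}}+\delta_i$ with $\sum\delta_i = \lambda(b)-\lambda(a)$ and all $\delta_i>0$ — this is possible because $\Fc$ runs from $a$ to $b$ with $\lambda(a)<\lambda(b)$, which follows from non-redundancy of the covering relations in $\Fc$ (a chain of non-redundant covers between marked elements forces $\lambda(a)<\lambda(b)$, by iterating the definition of non-redundancy — this is the point where the hypothesis is used). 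On unmarked elements not on $\Fc$, set the coordinates to generic values consistent with the order relations, pushed up toward their marked upper bounds so that \emph{no} covering relation outside $\Fc$ becomes an equality unless it is forced by the poset; for elements below $\Fc$ or incomparable, choose values making $\dashv_x$ single-valued and distinct from the $\Fc$-chain wherever possible. Then transfer: $y=\varphi_{\mathbf 1}(x)$ has $y_{p_i}=x_{p_i}-x_{p_{i-1}}=\delta_i>0$, so all the inequalities $0\le y_{p_i}$ are strict, and $y_{p_1}+\cdots+y_{p_r}=x_b-x_a=\lambda(b)-\lambda(a)$, giving equality in \eqref{eq:chaineq-goal}. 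The remaining task is to check, using \Cref{prop:ineq-pullback}, that every \emph{other} chain-sum inequality of \Cref{prop:mcop} — coming from a saturated chain $a'\prec q_1\prec\cdots\prec q_s\prec b'$ distinct from $\Fc$ — is strict at $y$; this amounts to showing $x_{q_s}<x_{b'}$ or the chain $q_{k-1}\dashv_x q_k\dashv_x\cdots$ fails somewhere, which the genericity of the off-chain values arranges. Finally, strictness of all but one defining inequality forces $y$ into the relative interior of a face of codimension $\le 1$; since the face is not all of $\CC(P,\lambda)$ (e.g.\ some vertex violates \eqref{eq:chaineq-goal}), it is a facet, so the inequality is non-redundant.

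**Main obstacle.** The delicate part is the genericity argument in the last step: one has to choose the off-chain coordinates of $x$ simultaneously so that \emph{no} chain-sum inequality other than the one from $\Fc$ degenerates, \emph{and} so that the point stays inside $\OO(P,\lambda)$. Chains can branch and merge and can share sub-chains with $\Fc$, so a chain $\Fc'$ that partially overlaps $\Fc$ could a priori also become tight. The way around this is to perturb: any tight off-chain inequality would impose a nonzero linear condition on the free parameters, and since there are only finitely many such conditions, a generic choice avoids them all; the only inequalities that \emph{cannot} be made strict by perturbation are those that are consequences of the marking and the relations defining the current face, and one must check none of those coincide with, or refine, the span of \eqref{eq:chaineq-goal}. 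Making this ``generic perturbation'' argument rigorous — identifying exactly which linear conditions are unavoidable and verifying they don't interfere — is where the real work lies; I expect it to mirror the facet analysis for marked order polytopes in \cite{Peg17}, with the relation $\dashv_x$ playing the role that covering relations play there.
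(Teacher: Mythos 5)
Your high-level strategy --- exhibit a witness point making \eqref{eq:chaineq-goal} tight and all other inequalities of \Cref{prop:mcop} strict (where possible) --- agrees with the paper, but the routes differ: you propose to build $x$ in $\OO(P,\lambda)$ and push it through $\varphi_{\mathbf 1}$ tracking $\dashv_x$, while the paper constructs an explicit witness directly in $\CC(P,\lambda)$. That difference would be fine, except your proposal leaves the actual content of the lemma untouched, and the gap you flag as the ``main obstacle'' is not one that generic perturbation resolves.

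The specific difficulty is this. You set the $\Fc$-coordinates to increments $\delta_1,\dots,\delta_r>0$ constrained by $\sum\delta_i=\lambda(b)-\lambda(a)$ and propose to perturb the \emph{off-chain} coordinates to kill any other tight chain inequality. But a competing chain $\Fd\colon a'\prec p_j\prec\cdots\prec p_{j'}\prec b'$ whose unmarked part is a \emph{sub}chain of $\Fc$ yields an inequality whose tightness (after transfer) depends only on the $\Fc$-coordinates and the markings $\lambda(a'),\lambda(b')$, not on any off-chain coordinate. So perturbing off $\Fc$ cannot help there; and the $\delta_i$ themselves are not free to perturb, since they must obey the telescoping constraint forcing $\Fc$-tightness and keep $x$ inside $\OO(P,\lambda)$ (so each partial sum must respect the marked bounds the chain passes). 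The paper's proof is precisely a solution to this tension: it introduces the sets $Z_i^{\uparrow},Z_i^{\downarrow},Z_i$ slicing $\Fc$ according to the intermediate marking values $\lambda_1<\cdots<\lambda_k$ in $[\lambda(a),\lambda(b)]$, and assigns $x_{p_j}=\sum_{i:\,p_j\in Z_i}(\lambda_{i+1}-\lambda_i)/|Z_i|$ --- a deliberately \emph{balanced}, not generic, choice. Non-redundancy of the covering relations in $\Fc$ enters exactly to prove that each $Z_i$ is nonempty and that any competing chain $\Fd$ either cannot be tight at all ($\lambda(a')=\lambda(b')$) or misses at least one summand in the relevant $Z_i$, hence is strict for small enough $\varepsilon$ on the off-chain coordinates; this is worked out in a four-way case split on how $[\lambda(a'),\lambda(b')]$ sits relative to $[\lambda(a),\lambda(b)]$. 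Your ``finitely many linear conditions, avoid them generically'' argument does not establish that these conditions are transversal to the tightness constraint, which is exactly what needs proving and what the $Z_i$ construction delivers.

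A secondary point: you justify $\lambda(a)<\lambda(b)$ by ``iterating the definition of non-redundancy,'' but no iteration is needed --- applying the definition to any single covering relation $p_i\prec p_{i+1}$ of $\Fc$ with $a'=a$, $b'=b$ immediately gives $\lambda(a)<\lambda(b)$, which is the observation the paper uses. Also, your claim that one can conclude the face is a facet from ``strictness of all but one defining inequality'' needs the preliminary check that the inequality \emph{can} be strict somewhere in $\CC(P,\lambda)$; the paper establishes this up front before building the more delicate witness.
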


In particular we obtain the following result.

\begingroup
\makeatletter
\apptocmd{\thecorollary}{\unless\ifx\protect\@unexpandable@protect\protect\footnotemark\fi}{}{}
\makeatother
\begin{corollary}\label{cor:chain-polyhedron}
    \footnotetext{The result of \Cref{cor:chain-polyhedron} was previously stated without proof in \cite[Lemma~1]{Fou16} for bounded polyhedra.}
    Let \((P,\lambda)\) be regular. The description of the marked chain polyhedron \(\CC(P,\lambda)=\OO_{\tilde P,\varnothing}(P,\lambda)\) given in \Cref{prop:mcop} is non-redundant.
\end{corollary}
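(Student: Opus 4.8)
The plan is to go through the list of defining conditions from \Cref{prop:mcop} for the partition $C=\tilde P$, $O=\varnothing$ and to check that each of them is indispensable. The equations $x_a=\lambda(a)$, $a\in P^*$, are of course needed, as they cut out the affine hull; since they fix those coordinates I would then work with the projection of $\CC(P,\lambda)=\OO_{\tilde P,\varnothing}(P,\lambda)$ to $\R^{\tilde P}$, where the remaining description consists of the positivity inequalities $0\le x_p$ for $p\in\tilde P$ together with the chain inequalities $x_{p_1}+\cdots+x_{p_r}\le \lambda(b)-\lambda(a)$ coming from saturated chains $\Fc\colon a\prec p_1\prec\cdots\prec p_r\prec b$ with $a,b\in P^*$, all $p_i\in\tilde P$ and $r\ge 1$.

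For the chain inequalities this is immediate from \Cref{lem:chain}. Such a chain $\Fc$ is made up of the covering relations $a\prec p_1$, $p_1\prec p_2,\dots,p_{r-1}\prec p_r$, $p_r\prec b$ of $P$. As $(P,\lambda)$ is regular, every covering relation of $P$ is non-redundant, so in particular none of the covering relations occurring in $\Fc$ is redundant, and \Cref{lem:chain} applies to give that the inequality associated to $\Fc$ is not implied by the remaining defining conditions.

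For the positivity inequalities I would argue directly. Fix $p\in\tilde P$ and let $y\in\R^{\tilde P}$ be the point with $y_p=-1$ and $y_q=0$ for all $q\ne p$. Then $y$ violates $0\le x_p$, it satisfies $0\le x_q$ for every $q\ne p$, and for each chain inequality the value of the left-hand side at $y$ equals $0$ or $-1$ according to whether $p$ occurs among the $p_i$, hence is at most $0$, while the right-hand side $\lambda(b)-\lambda(a)$ is nonnegative because $a<b$ and $\lambda$ is order-preserving. Thus $y$ lies in the polyhedron defined by all the conditions except $0\le x_p$ but not in $\CC(P,\lambda)$, which shows that $0\le x_p$ is not redundant. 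Together with the previous paragraph this proves the claim.

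The only real input is \Cref{lem:chain}; granting it, the corollary is a routine verification. The one point to be careful about is completeness of the case distinction: the positivity inequalities are not special cases of \Cref{lem:chain} and have to be handled by the elementary argument above, whereas regularity enters precisely to secure the hypothesis of \Cref{lem:chain} for all chains at once (and is not needed for the positivity part). I do not expect any further difficulty.
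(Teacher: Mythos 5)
Your proof is correct, and it takes essentially the paper's route but is actually more careful than the paper at one point. The paper simply records this corollary as an immediate consequence of \Cref{lem:chain} (``In particular we obtain the following result''), without comment. However, \Cref{lem:chain} only treats the chain inequalities $x_{p_1}+\cdots+x_{p_r}\le x_b-x_a$ with $r\ge 1$ between marked endpoints; it says nothing about the positivity inequalities $0\le x_p$ for $p\in\tilde P$, which are a separate item in the description from \Cref{prop:mcop}. You correctly identify this gap and close it with the elementary point $y$ having $y_p=-1$ and $y_q=0$ otherwise, observing that this satisfies every remaining inequality (each chain inequality has left-hand side at most $0$ and right-hand side $\lambda(b)-\lambda(a)\ge 0$ since $\lambda$ is order-preserving) while violating $0\le x_p$. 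You also correctly note that regularity is used only to guarantee the hypothesis of \Cref{lem:chain} for every chain between marked elements, and is not needed for the positivity part. In short, your argument is a cleaner and more complete version of what the paper intends; the substance---reduce to \Cref{lem:chain} via regularity---is the same.
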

\endgroup

\begin{proof}[Proof of \Cref{lem:chain}]
    Our strategy is as follows.
    First show that \eqref{eq:chaineq-goal} can be strictly satisfied by some point in \(\CC(P,\lambda)\), so the polyhedron is not contained in the corresponding hyperplane.
    Then construct a point \(x\in\CC(P,\lambda)\) such that \eqref{eq:chaineq-goal} is satisfied with equality but all other inequalities that can be strictly satisfied by points in \(\CC(P,\lambda)\) are strictly satisfied by \(x\).
    This shows that \eqref{eq:chaineq-goal} is the only inequality describing a facet with \(x\) in its relative interior.

    To see that \eqref{eq:chaineq-goal} can be strictly satisfied, just take \(x\in\R^P\) with \(x_a=\lambda(a)\) for \(a\in P^*\) and \(x_p=0\) for \(p\in\tilde P\).
    Note that \(\lambda(a)<\lambda(b)\) since otherwise all covering relations in \(\Fc\) would be redundant.

    For the second step, first linearly order the set of all markings in \([\lambda(a), \lambda(b)]\), so that
    \begin{equation*}
        \lambda(P^*) \cap [\lambda(a),\lambda(b)] = \{ \lambda_1, \dots, \lambda_k \}
    \end{equation*}
    with \(\lambda(a) = \lambda_1 < \cdots < \lambda_k = \lambda(b)\) and \(k>1\).
    For \(i=1,\dots,k-1\) we define the following sets:
    \begin{align*}
        Z^\uparrow_i &= \left\{ \, p\in\Fc : \text{\(p \ge d\) for some \(d\in P^*\) with \(\lambda(d) \ge \lambda_{i+1}\)} \,\right\}, \\
        Z^\downarrow_i &= \left\{ \, p\in\Fc : \text{\(p \le e\) for some \(e\in P^*\) with \(\lambda(e) \le \lambda_i\)} \,\right\}, \\
        Z_i &= \Fc\setminus \left( Z^\uparrow_i \sqcup Z^\downarrow_i \right).
    \end{align*}
    Note that \(Z^\uparrow_i\) and \(Z^\downarrow_i\) are disjoint, since any \(p\) in their intersection would give \(d\le p\le e\) with \(\lambda(d)\ge\lambda_{i+1} > \lambda_i \ge \lambda(e)\) contradicting \(\lambda\) being order-preserving.

    For \(p\in Z_i^\uparrow\) all elements of \(\Fc\) greater than \(p\) are also contained in \(Z_i^\uparrow\) and for \(p\in Z_i^\downarrow\) all elements of \(\Fc\) less than \(p\) are also contained in \(Z_i^\downarrow\).
    Furthermore, we have \(a\in Z_i^\downarrow\) and \(b\in Z_i^\uparrow\) for all \(i\).
    Thus, the chain \(\Fc\) decomposes into three connected subchains \(Z_i^\downarrow\), \(Z_i\), \(Z_i^\uparrow\).

    We claim that the middle part \(Z_i\) is always non-empty as well.
    Otherwise, the chain \(\Fc\) contains a covering relation \(p\prec q\) with \(p\in Z_i^\downarrow\) and \(q\in Z_i^\uparrow\) and hence we had \(d,e\in P^*\) with \(e\ge p\prec q \ge d\) and \(\lambda(e)\le \lambda_i < \lambda_{i+1} \le \lambda(d)\) so that \(p\prec q\) is redundant.

    We also claim that each \(p_j\in\Fc\) is contained in at least one of the \(Z_i\).
    Since \(a\le p_j\), we can choose \(i_0\in[k]\) maximal such that \(p_j\ge d\) for some \(d\) with \(\lambda(d)\ge \lambda_{i_0}\).
    In the same fashion, choose \(i_1\in[k]\) minimal such that \(p_j\le e\) for some \(e\) with \(\lambda(e)\le \lambda_{i_1}\).
    We have \(i_0<i_1\), since otherwise there are \(d\le p_j\le e\) with \(\lambda(d)\ge\lambda(e)\), either rendering \(\lambda\) non order-preserving or any covering relation above or below \(p_j\) redundant.
    We conclude that \(p_j\in Z_i\) for \(i=i_0,\dots,i_1-1\).

    Define a point \(x\in\R^P\) by letting \(x_a=\lambda(a)\) for all \(a\in P^*\) and for \(p\in\tilde P\):
    \begin{equation*}
        x_p =
        \begin{cases} \displaystyle
            \smashoperator[r]{\sum_{\substack{i=1,\dots,k-1,\\p\in Z_i}}} \frac{\lambda_{i+1} - \lambda_i}{|Z_i|} & \text{for \(p\in\Fc\) and} \\
            \quad\varepsilon &\text{for \(p\in\tilde P\setminus\Fc\),}
        \end{cases}
    \end{equation*}
    where \(\varepsilon>0\) is small enough to satisfy the finitely many constraints in the rest of this proof.
    Note that all \(|Z_i|>0\) since the \(Z_i\) are non-empty and all \(x_p>0\) for \(p\in\tilde P\) since each \(p_j\in\Fc\) is contained in at least one of the \(Z_i\).

    The inequality given by \(\Fc\) is satisfied with equality, since
    \begin{align*}
        \sum_{j=1}^r x_{p_j} &=
        \sum_{j=1}^r \sum_{\substack{i=1,\dots,k-1,\\p_j\in Z_i}} \frac{\lambda_{i+1} - \lambda_i}{|Z_i|} \\
        &= \sum_{i=1}^{k-1} \sum_{\substack{j=1,\dots,r\\p_j\in Z_i}} \frac{\lambda_{i+1} - \lambda_i}{|Z_i|}
        = \sum_{i=1}^{k-1} (\lambda_{i+1} - \lambda_i)
        = \lambda_k - \lambda_1 = x_b - x_a.
    \end{align*}

    Now consider any chain \(\Fd\colon a' \prec q_1 \prec \cdots \prec q_s \prec b'\) different from \(\Fc\).
    We have to show that the inequality
    \begin{equation} \label{eq:second-chain}
        x_{q_1} + \cdots + x_{q_s} \le x_{b'} - x_{a'}
    \end{equation}
    either can not be strictly satisfied by any point in \(\CC(P,\lambda)\) or is strictly satisfied by \(x\).

    If \(\lambda(a')=\lambda(b')\) the inequality can never be satisfied strictly by points in \(\CC(P,\lambda)\).
    If \(\lambda(a')<\lambda(b')\) we have
    \begin{equation} \label{eq:some-eps}
        \sum_{j=1}^s x_{q_j} = \sum_{q\in\tilde\Fd} x_q =
            \varepsilon \left|\,\tilde \Fd\setminus\tilde \Fc\,\right| + \sum_{q\in \tilde \Fd\cap\tilde \Fc} \smashoperator[r]{\sum_{\substack{i=1,\dots,k-1,\\q\in Z_i}}} \frac{\lambda_{i+1} - \lambda_i}{|Z_i|},
    \end{equation}
    where \(\tilde\Fc\) and \(\tilde\Fd\) denote the unmarked parts of \(\Fc\) and \(\Fd\), respectively.

    Let \(S\) denote the double sum in \eqref{eq:some-eps} and consider the following cases:
    \begin{enumerate}
        \item \label{it:longcase} We have \(\lambda(a) < \lambda(a') < \lambda(b') < \lambda(b)\).
            Let \(1<i_0<i_1<k\) be the indices such that \(\lambda_{i_0}=\lambda(a')\), \(\lambda_{i_1}=\lambda(b')\).
            Note that all elements of \(\tilde\Fd\) are above \(a'\) with \(\lambda(a')=\lambda_{i_0}\) so \(\tilde\Fd\cap\tilde\Fc\subseteq Z_{i_0-1}^\uparrow\) and we have \(\tilde\Fd\cap Z_i=\varnothing\) for \(i<i_0\).
            By the same reasoning \(\tilde\Fd\cap Z_i=\varnothing\) for \(i\ge i_1\).
            Hence, we have
            \begin{equation*}
                S = \sum_{q\in \tilde \Fd\cap\tilde \Fc} \smashoperator[r]{\sum_{\substack{i=i_0,\dots,i_1-1,\\q\in Z_i}}} \frac{\lambda_{i+1} - \lambda_i}{|Z_i|} \le \lambda(b')-\lambda(a'),
            \end{equation*}
            with equality achieved if and only if \(Z_i\subseteq\tilde\Fd\) for \(i=i_0,\dots,i_1-1\).
            
            Let \(p\in\tilde\Fc\) be maximal such that \(p\in Z_{i_1-1}\).
            Then there is a covering relation \(p\prec q\) in \(\Fc\) with \(q\in Z_{i_1-1}^\uparrow\).
            We have \(p\notin\Fd\), since otherwise \(p<b'\) and \(q>d\) for some \(d\) with \(\lambda(d)\ge \lambda_{i_1} = \lambda(b)\), rendering \(p\prec q\) redundant.
            Hence, \(p\in Z_{i_1-1}\setminus\tilde\Fd\) and \(S<\lambda(b')-\lambda(a')\).

            We conclude that \eqref{eq:second-chain} is strictly satisfied for small enough \(\varepsilon\).
        \item We have \(\lambda(a')< \lambda(b') \le \lambda(a)\) or \(\lambda(b) \le \lambda(a') < \lambda(b')\).
            In this case we have \(\Fd\subseteq Z_i^\downarrow\) for all \(i\) or \(\Fd\subseteq Z_i^\uparrow\) for all \(i\), respectively, so that \(S=0\).
            Choosing \(\varepsilon\) small enough yields strict inequality in \eqref{eq:second-chain}.

        \item We have \(\lambda(a)<\lambda(a')<\lambda(b)\le\lambda(b')\) or \(\lambda(a')\le\lambda(a)<\lambda(b')<\lambda(b)\).
            By reasoning similar to \cref{it:longcase} we have \(S<\lambda(b)-\lambda(a')\) or \(S<\lambda(b')-\lambda(a)\), respectively.
            In both cases \(S<\lambda(b')-\lambda(a')\) and choosing \(\varepsilon\) small enough yields strict inequality in \eqref{eq:second-chain}.
        \item We have \(\lambda(a')\le\lambda(a)<\lambda(b)\le\lambda(b')\). In case \(\tilde\Fd\cap\tilde\Fc=\tilde\Fc\) we have \(\lambda(a')<\lambda(a)\) and \(\lambda(b)<\lambda(b')\) since otherwise the covering relation \(a\prec p_1\) or \(p_k\prec b\) would be redundant.
            Hence
            \begin{equation*}
                \sum_{q\in\tilde\Fd} x_q =
                    \varepsilon \left|\,\tilde \Fd\setminus\tilde \Fc\,\right| + \left(\lambda(b)-\lambda(a)\right) < \lambda(b')-\lambda(a')
            \end{equation*}
            for \(\varepsilon\) small enough.

            In case \(\tilde\Fd\cap\tilde\Fc\neq\tilde\Fc\), at least one summand is missing in \(S\) to achieve \(\lambda(b)-\lambda(a)\) since each \(p\in\tilde\Fc\) is in at least one of the \(Z_i\).
            Thus, \(S<\lambda(b)-\lambda(a)\) and we may choose \(\varepsilon\) small enough to obtain 
            \begin{equation*}
                \sum_{q\in\tilde\Fd} x_q < 
                    \lambda(b)-\lambda(a) \le \lambda(b')-\lambda(a').
            \end{equation*}
    \end{enumerate}
    In all cases \eqref{eq:second-chain} is satisfied by \(x\) with strict inequality and we conclude that \eqref{eq:chaineq-goal} is not redundant in the description of \(\CC(P,\lambda)\) given in \Cref{prop:mcop}.
\end{proof}

For ranked marked posets, we can use \Cref{lem:chain} to show that \Cref{conj:regular-tame} holds.

\begin{definition}
    A marked poset \((P,\lambda)\) is called \emph{ranked} if there exists a \emph{rank function} \(\rk\colon P\to \Z\) satisfying
    \begin{enumerate}
           \item \(\rk p + 1 = \rk q\) for all \(p,q\in P\) with \(p\prec q\),
        \item \(\lambda(a) < \lambda(b)\) for all \(a,b\in P^*\) with \(\rk a < \rk b\).
    \end{enumerate}
\end{definition}

Note that the rank function of a ranked marked poset is uniquely determined up to a constant on each connected component. 

\begin{proposition} \label{prop:ranked-tame}
    Let \((P,\lambda)\) be regular and ranked, then \((P,\lambda)\) is tame.
\end{proposition}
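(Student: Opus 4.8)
The plan is to prove that for \emph{every} partition $\tilde P=C\sqcup O$ the description of $\OO_{C,O}(P,\lambda)$ given in \Cref{prop:mcop} is irredundant; since that description always cuts out $\OO_{C,O}(P,\lambda)$, irredundancy is exactly tameness. There are two families of inequalities to handle: the constraints $0\le x_p$ for $p\in C$, and the chain inequalities $x_{p_1}+\cdots+x_{p_r}\le x_b-x_a$ attached to saturated chains $\Fc\colon a\prec p_1\prec\cdots\prec p_r\prec b$ with $a,b\in P^*\sqcup O$ and all $p_i\in C$. For each such inequality we must exhibit a point of $\OO_{C,O}(P,\lambda)$ in its relative interior, and I would produce this point as $\varphi_{\chi_C}(y)$ for a well-chosen $y\in\OO(P,\lambda)$, using \Cref{prop:ineq-pullback} to translate tightness on the $\OO_{C,O}$ side into the relation $\dashv_y$ on $(P,\lambda)$: the inequality attached to $\Fc$ is tight at $\varphi_{\chi_C}(y)$ precisely when $a\dashv_y p_1\dashv_y\cdots\dashv_y p_r$ and $y_b=y_{p_r}$, while $0\le x_p$ is tight at $\varphi_{\chi_C}(y)$ precisely when $y_p=\max_{q\prec p}y_q$. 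Thus everything reduces to constructing, for each target inequality, a point $y\in\OO(P,\lambda)$ realizing exactly the prescribed $\dashv_y$-pattern and no superfluous coincidences.

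The nonnegativity inequalities are the easy part. Since $(P,\lambda)$ is regular it has no constant intervals and is strictly marked, so $\OO(P,\lambda)$ is full-dimensional and by \cite{Peg17} its facets correspond to covering relations. Picking any covering relation $q_0\prec p$ and a generic point $y$ of the corresponding facet $\{y_p=y_{q_0}\}$ of $\OO(P,\lambda)$ yields $\varphi_{\chi_C}(y)_p=0$ while all other defining inequalities of $\OO_{C,O}(P,\lambda)$ are strict at $\varphi_{\chi_C}(y)$. For the chain inequalities the plan is to adapt the explicit point construction from the proof of \Cref{lem:chain}. Given $\Fc$, extend it inside $P$ to a saturated chain $\hat\Fc\colon\hat a\prec\cdots\prec a\prec p_1\prec\cdots\prec p_r\prec b\prec\cdots\prec\hat b$ between marked elements $\hat a,\hat b\in P^*$ with all intermediate elements unmarked (possible since every minimal element, and in the polytope case every maximal element, is marked); because $(P,\lambda)$ is regular, no covering relation of $\hat\Fc$ is redundant, so the machinery of \Cref{lem:chain} applies to $\hat\Fc$. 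The ranked hypothesis is used essentially here: along a saturated chain of a ranked poset the sets $Z^\uparrow_i$, $Z_i$, $Z^\downarrow_i$ of that proof are determined purely by rank, so the point constructed there is completely explicit and its $\dashv$-pattern is exactly $\hat\Fc$, and moreover condition~(2) in the definition of a ranked marked poset forces $\lambda(c)<\lambda(d)$ whenever $\rk c<\rk d$, which is what lets the $\varepsilon$-perturbation be chosen uniformly and keeps the order coordinates along $\hat\Fc$ from accidentally making a second chain inequality tight. Transporting this construction to $\OO(P,\lambda)$ — spreading the total mass $\lambda(\hat b)-\lambda(\hat a)$ along the ranks of $\hat\Fc$ and adding a generic small perturbation off $\hat\Fc$ — produces the required $y$, hence a relative-interior point of the facet of $\OO_{C,O}(P,\lambda)$ cut out by $\Fc$.

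The main obstacle, and the reason the argument is only carried through for ranked posets, is the interaction of the order elements of $O$ with the chain inequalities: unlike in \Cref{lem:chain}, where $x_a$ and $x_b$ are the fixed numbers $\lambda(a),\lambda(b)$, here they are variables, so forcing one chain inequality tight couples the coordinates of all the marked and order elements along the extended chain $\hat\Fc$, and one has to rule out that this coupling silently forces equality in another chain inequality through a different order element, or destroys the strictness supplied by \Cref{lem:chain}. The rank function linearly stratifies the marked and order elements and makes the strict inequalities $\lambda(\cdot)<\lambda(\cdot)$ available across strata, and this rigidity is exactly what keeps the bookkeeping under control; for a general regular marked poset the strata can overlap and the same construction need not land in the relative interior of the intended facet, which is why the general case of \Cref{conj:regular-tame} is left open.
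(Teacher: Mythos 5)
Your reduction is set up correctly -- tameness is irredundancy of the description in \Cref{prop:mcop} for \emph{every} partition $\tilde P=C\sqcup O$, the nonnegativity inequalities $0\le x_p$ can be handled easily, and one should push tightness questions back to $\OO(P,\lambda)$ via \Cref{prop:ineq-pullback}. But the central move you propose for the chain inequalities does not go through, and you never quite close the gap you yourself flag.

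Your plan is to extend $\Fc\colon a\prec p_1\prec\cdots\prec p_r\prec b$ (with $a,b\in P^*\sqcup O$, $p_i\in C$) to a saturated chain $\hat\Fc$ between marked elements and then apply \Cref{lem:chain} to $\hat\Fc$. But \Cref{lem:chain} establishes irredundancy of the inequality attached to $\hat\Fc$ \emph{in the marked chain polyhedron} $\CC(P,\lambda)=\OO_{\tilde P,\varnothing}(P,\lambda)$, where \emph{all} unmarked elements -- including $a$, $b$, and the extension segments -- are chain elements and appear on the left of the inequality. This is a different inequality, for a different polyhedron, than the one $\Fc$ cuts out in $\OO_{C,O}(P,\lambda)$, where $a$ and $b$ sit on the right as variables and only the $p_i\in C$ appear on the left. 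No amount of ``transporting back'' removes this mismatch; a relative-interior point for a facet of $\CC(P,\lambda)$ is not automatically a relative-interior point for a facet of $\OO_{C,O}(P,\lambda)$. Your last paragraph recognizes this coupling problem, but ``spreading the total mass along the ranks and adding a generic perturbation'' does not resolve it -- that is exactly the content one has to prove, and verifying that no second chain inequality of $\OO_{C,O}(P,\lambda)$ becomes tight under such a perturbation is the whole difficulty.

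The paper's resolution is to extend the \emph{marking}, not the chain: construct $\lambda'\colon P^*\sqcup O\to\R$ extending $\lambda$ so that (using the rank function to pick values in the gaps $(\xi_i,\xi_{i+1})$) the chain $\Fc$ has no redundant covering relations for $(P,\lambda')$. Then the affine slice $U=\{x : x_p=\lambda'(p)\text{ for }p\in O\}$ satisfies $\OO_{C,O}(P,\lambda)\cap U=\CC(P,\lambda')$, and the chain $\Fc$ now has endpoints in $(P')^*=P^*\sqcup O$ and intermediate elements in $\tilde P'=C$, exactly the setting of \Cref{lem:chain}. Irredundancy of the $\Fc$-inequality in $\CC(P,\lambda')$ gives it in $\OO_{C,O}(P,\lambda)\cap U$, hence in $\OO_{C,O}(P,\lambda)$. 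This slicing trick is what turns the hard coupling into a question already answered by \Cref{lem:chain}, and the ranked hypothesis is used precisely to fit the new marking values into the rank strata so that no covering relation of $\Fc$ becomes redundant for $\lambda'$. Your proposal never introduces this extended marking, which is the key idea. You also omit the $r=0$ case (a single covering relation between elements of $P^*\sqcup O$), which \Cref{lem:chain} cannot handle and the paper treats by a separate explicit point construction.
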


\begin{proof}
    Let \(\rk\colon P\to\Z\) be a rank function such that \(\min\{\rk p : p\in P\}=0\) and let \(r=\max\{\rk p: p\in P\}\).
    Since \(\lambda(a)<\lambda(b)\) for marked elements with \(\rk a<\rk b\), we can choose real numbers \(\xi_0<\xi_1 < \cdots < \xi_{r+1}\) such that \(\lambda(a) \in (\xi_i, \xi_{i+1})\) for \(a\in P^*\) with \(\rk a=i\).

    Let \(\tilde P=C\sqcup O\) be any partition.
    All inequalities \(0\le x_p\) for \(p\in C\) are non-redundant in the description of \(\OO_{C,O}(P,\lambda)\) given in \Cref{prop:mcop}.
    To see this, take any \(x\in\OO_{C,O}(P,\lambda)\) and let \(x'\in\R^P\) be given by \(x'_q=x_q\) for \(q\neq p\) and \(x_p=-1\).

    Now consider any chain \(\Fc\colon a\prec p_1\prec\cdots\prec p_r\prec b\) with \(a,b\in P^*\sqcup O\) and all \(p_i\in\tilde P\).
    If \(r=0\), we have to show that \(x_a\le x_b\) is a non-redundant inequality provided at least one of \(a\) and \(b\) is not marked.
    For this, define \(x\in\OO_{C,O}(P,\lambda)\) by
    \begin{equation*}
        x_p =
        \begin{cases}
            \lambda(p) &\text{for \(p\in P^*\),} \\
            \xi_{\rk p} &\text{for \(p\in O\setminus\{a,b\}\) with \(\rk p\le\rk a\),} \\
            \xi_{\rk p+1} &\text{for \(p\in O\setminus\{a,b\}\) with \(\rk p\ge\rk b\),} \\
            \xi_{\rk b} &\text{for \(p\in\{a,b\}\) if \(a,b\in O\),} \\
            \lambda(a) &\text{for \(p\in\{a,b\}\) if \(a\notin O\),} \\
            \lambda(b) &\text{for \(p\in\{a,b\}\) if \(b\notin O\),} \\
            \min_i \{\xi_{i+1}-\xi_i\} & \text{for \(p\in C\).}
        \end{cases}
    \end{equation*}
    Using the fact that \((P,\lambda)\) is ranked it is routine to check that \(x\) satisfies all inequalities of \Cref{prop:mcop} strictly except for \(x_a\le x_b\).

    Now consider the case where \(r\ge 1\).
    The idea is to extend the marking \(\lambda\) to a marking \(\lambda'\) defined on \(P^*\sqcup O\) such that \((P,\lambda')\) has no redundant covering relations in \(\Fc\).
    We then have \(\OO_{C,O}(P,\lambda)\cap U = \CC(P,\lambda')\) with \(U\) given by \(x_p = \lambda'(p)\) for \(p\in O\).
    Note that the description of \(\CC(P,\lambda')\) in \Cref{prop:mcop} is exactly the description given for \(\OO_{C,O}(P,\lambda)\) in \Cref{prop:mcop} with the additional equations \(x_p = \lambda'(p)\) for \(p\in O\).
    In the description of \(\CC(P,\lambda')\) the inequality given by \(\Fc\) is not redundant by \Cref{lem:chain} and hence the same inequality is not redundant in the description of \(\OO_{C,O}(P,\lambda)\cap U\).
    Thus, it can not be redundant in the description of \(\OO_{C,O}(P,\lambda)\) itself either.

    It remains to construct the extended marking \(\lambda'\). Let \(\lambda'(p)=\lambda(p)\) for \(p\in P^*\) and for \(p\in O\) with \(\rk p=i\) choose
    \begin{equation*}
        \lambda'(p) \in
        \begin{cases}
            \left(\xi_i, \xi_{i+1}\right) &\text{for \(i\notin\{\rk a, \rk b\}\),} \\
            \left( \max\left\{\xi_i, \max\{\lambda(d) : \rk d=i\}\right\}, \xi_{i+1} \right) &\text{for \(p=a\) if \(a\in O\),} \\
            \left( \xi_i, \min\left\{\xi_{i+1}, \min\{\lambda(d) : \rk d=i\}\right\}\right) &\text{for \(p=b\) if \(b\in O\),} \\
            \left( \xi_i, \lambda'(a) \right) &\text{for \(i=\rk a\), \(p\neq a\),} \\
            \left( \lambda'(b), \xi_{i+1} \right) &\text{for \(i=\rk b\), \(p\neq b\).}
        \end{cases}
    \end{equation*}
    The appearing open intervals are all non-empty so these choices are possible.
    Given any such \(\lambda'\), we still have \(\lambda'(d)\in (\xi_i,\xi_{i+1})\) when \(\rk d=i\), so \((P,\lambda')\) is still a ranked marked poset.
    Let us verify that \(\Fc\) contains no redundant covering relation with respect to \((P,\lambda')\).
    \begin{enumerate}
        \item The covering relation \(a\prec p_1\) is non-redundant since \(\lambda'(d)<\lambda'(a)\) for all marked elements \(d\le p_1\), \(d\neq a\).
        \item The covering relation \(p_r\prec b\) is non-redundant since \(\lambda'(d)>\lambda'(b)\) for all marked elements \(d\ge p_r\), \(d\neq b\).
        \item All covering relations \(p_j\prec p_{j+1}\) are non-redundant since \((P,\lambda)\) is ranked.
    \end{enumerate}
    Hence, we can apply \Cref{lem:chain} to \(\CC(P,\lambda')\) and obtain the desired result.
\end{proof}

\begin{remark}
    In light of the proof of \Cref{prop:ranked-tame}, a possible strategy to prove \Cref{conj:regular-tame} in general would be to extend markings such that along a given chain the covering relations stay non-redundant.
    However, we did not succeed in doing this for arbitrary (non-ranked) marked posets.
\end{remark}

\begin{remark}
    The marked posets relevant in representation theory appearing in \cite{ABS11,BD15} are all ranked and regular after applying the transformations of \Cref{sec:poset-transforms} if necessary.
    Hence, they are tame and \Cref{prop:mcop} gives non-redundant descriptions for all associated marked chain-order polyhedra.
\end{remark}

We show that \((P,\lambda)\) being tame also implies the description given for generic marked poset polyhedra \(\OO_t(P,\lambda)\) in \Cref{def:mpp} is non-redundant.

\begin{proposition}
    Let \((P,\lambda)\) be a tame marked poset. The description of any generic marked poset polyhedron \(\OO_t(P,\lambda)\) for \(t\in\smash{(0,1)^{\tilde P}}\) given in \Cref{def:mpp} is non-redundant.
\end{proposition}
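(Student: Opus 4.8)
The plan is to deduce the non-redundancy of the description in \Cref{def:mpp} from tameness, one inequality at a time, by degenerating $\OO_t(P,\lambda)$ onto a carefully chosen marked chain-order polytope. Fix a generic $t\in(0,1)^{\tilde P}$ and let $\Fc\colon p_0\prec p_1\prec\dots\prec p_r\prec p$ be one of the chains indexing \Cref{def:mpp} (so $p_0\in P^*$, $p_i\in\tilde P$ for $i\ge1$, $p\in P$, $r\ge0$, and not the omitted case $p\in P^*$, $r=0$); write $I_\Fc$ for the inequality \eqref{eq:chain} it defines. I want to prove that $I_\Fc$ cuts out a facet of $\OO_t(P,\lambda)$. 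I take the partition $\tilde P=C\sqcup O$ with $C=\{p_1,\dots,p_r\}$ (so $C=\varnothing$ when $r=0$). By the specialization computation in the proof of \Cref{prop:mcop}, evaluating $I_\Fc$ at the hypercube vertex $\chi_C$ yields the inequality
\[
 J\colon\qquad x_{p_1}+\dots+x_{p_r}\le x_p-x_{p_0},
\]
which is one of the inequalities describing $\OO_{C,O}(P,\lambda)$ in \Cref{prop:mcop} (for $r=0$ it is the covering-relation inequality $x_{p_0}\le x_p$ of $\OO(P,\lambda)=\OO_{\varnothing,\tilde P}(P,\lambda)$). Since $(P,\lambda)$ is tame, $J$ defines a facet $G$ of $\OO_{C,O}(P,\lambda)$ and the description of \Cref{prop:mcop} is non-redundant, so $\relint G$ is exactly the set of points of $\OO_{C,O}(P,\lambda)$ at which $J$ is tight and every other inequality of \Cref{prop:mcop} is strict.

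Next I apply the continuous-degeneration machinery of \Cref{sec:cont-deg}. As $t$ is generic, $\chi_C$ is a degeneration of $t$, so \Cref{prop:mpp-degeneration} and \Cref{cor:f-vector-degeneration} furnish a continuous degeneration $\bigl(\OO_{\xi\chi_C+(1-\xi)t}(P,\lambda)\bigr)_{\xi\in[0,1]}$ from $\OO_t(P,\lambda)$ to $\OO_{C,O}(P,\lambda)$, with maps $\rho_\xi=\theta_{t,\,\xi\chi_C+(1-\xi)t}$, accompanying affine forms the inequalities of \Cref{def:mpp} at parameter $\xi\chi_C+(1-\xi)t$, and a surjective, dimension-nondecreasing degeneration map $\dg\colon\FF(\OO_t(P,\lambda))\to\FF(\OO_{C,O}(P,\lambda))$. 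By the proof of \Cref{prop:degenerate-f-vector}, the facet $G$ has a preimage $F^*\in\dg^{-1}(G)$ with $\dim F^*=\dim G$, hence $F^*$ is a facet of $\OO_t(P,\lambda)$. Picking $y$ in the relative interior of $F^*$ gives $\rho_1(y)\in\relint G$. If $I_{\Fc'}$ is any inequality of \Cref{def:mpp} tight at $y$, then by the defining property of a continuous degeneration (\Cref{def:cont-deg}) it is tight at $\rho_\xi(y)$ for all $\xi<1$, so by continuity in $\xi$ its $\chi_C$-specialization is tight at $\rho_1(y)$. Since the $\chi_C$-specialization of every chain indexing \Cref{def:mpp} occurs among the inequalities of \Cref{prop:mcop}, and at $\rho_1(y)$ the only tight one of those is $J$, the specialization of $I_{\Fc'}$ must equal $J$.

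The final ingredient is that $\Fc$ is the \emph{unique} chain indexing \Cref{def:mpp} whose $\chi_C$-specialization equals $J$; granting this, the previous paragraph shows that every inequality tight on $F^*$ equals $I_\Fc$, so $F^*=\{\,x\in\OO_t(P,\lambda): I_\Fc \text{ tight}\,\}$ has codimension one and $I_\Fc$ is facet-defining, which proves the proposition after running over all chains $\Fc$. To see the uniqueness, I use that tameness forces $(P,\lambda)$ to be regular (as noted after \Cref{conj:regular-tame}) and that in a regular marked poset every element covers at most one marked element and is covered by at most one marked element---both follow immediately from the definition of a redundant covering relation. Suppose $\Fc'\colon q_0\prec\dots\prec q_m\prec q$ specializes to $J$ under $\chi_C$. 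A specialization of the form $0\le x_q$ is excluded since $J$ is not of that shape, so it arises from \Cref{prop:mcop}(3): with $k$ the largest index for which $q_k$ is marked or in $O$, the specialization is $x_{q_{k+1}}+\dots+x_{q_m}\le x_q-x_{q_k}$. Matching the non-constant parts with $J$ forces $q_{k+i}=p_i$ for $1\le i\le r$ (so $m-k=r$); the one remaining term of $J$ being the \emph{marked} coordinate $x_{p_0}$, a constant, forces $q_k$ to be marked as well, so $k=0$ because only $q_0$ is marked along $\Fc'$; and then regularity---$p_1$ covers at most one marked element and $p_r$ is covered by at most one marked element---pins down $q_0=p_0$ and $q=p$. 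Hence $\Fc'=\Fc$.

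The main obstacle is this last uniqueness step; everything preceding it is essentially an assembly of \Cref{prop:mcop}, the continuous-degeneration results, and tameness. In the uniqueness argument one must be careful that marked coordinates of $J$ degenerate to constants and therefore cannot be matched against unmarked coordinates of a competing chain's specialization, and it is precisely here that regularity---available because $(P,\lambda)$ is tame---is used essentially. For a merely regular, non-tame marked poset the step ``$J$ defines a facet of $\OO_{C,O}(P,\lambda)$'' would already fail, which is consistent with the restriction to tame posets in the statement.
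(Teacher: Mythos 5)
Your proof is correct and follows the same strategy as the paper's: reduce to the hypercube vertex $\chi_C$ for the tailored partition $C=\{p_1,\dots,p_r\}$, observe that $\Fc$ is the unique chain from \Cref{def:mpp} specializing to $J$, invoke tameness of $\OO_{C,O}(P,\lambda)$, and lift via the degeneration machinery. The paper's own proof is a terse three-sentence sketch that leaves the lifting step (``we conclude that $\Fc$ cannot be omitted \dots whenever $u=\chi_C$ is a degeneration of $t$'') and the uniqueness verification to the reader, whereas you spell out both; one minor remark is that the appeal to regularity in your uniqueness step can be avoided, since matching the specializations as Prop.~\ref{prop:mcop}--inequalities (rather than merely as affine functionals in the projected space $\R^{\tilde P}$) already forces $q_{k'}=p_0$, hence $k'=0$ and $\Fc'=\Fc$, because $p_0\in P^*$ while all intermediate chain elements lie in $\tilde P$.
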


\begin{proof}
    The way we will prove non-redundance of the description in \Cref{def:mpp} is to reconsider the proof of \Cref{prop:mcop}.
    We have seen that picking a parameter \(u=\chi_C\in\smash{\{0,1\}^{\tilde P}}\) for a partition \(\tilde P=C\sqcup O\) we obtain the description in \Cref{prop:mcop} but there might be multiple chains as in \Cref{def:mpp} such that \eqref{eq:chain} degenerates to the same inequality listed in \Cref{prop:mcop}.
    Since we know the description in \Cref{prop:mcop} is non-redundant for tame marked posets, we can do the following: take a chain \(\Fc\) giving an inequality for \(\OO_t(P,\lambda)\) as in \Cref{def:mpp} and construct a partition \(\tilde P=C\sqcup O\) such that no other chain yields the same inequality as \(\Fc\) for the marked chain-order polyhedron \(\OO_{C,O}(P,\lambda)\).
    Knowing that the description of \(\OO_{C,O}(P,\lambda)\) is non-redundant we conclude that \(\Fc\) can not be omitted in the description of \(\OO_t(P,\lambda)\) either whenever \(u=\chi_C\) is a degeneration of \(t\), in particular when \(t\in\smash{(0,1)^{\tilde P}}\).

    Consider any chain \(\Fc\colon p_0\prec p_1 \prec p_2 \prec \cdots \prec p_r \prec p\) with \(p_0\in P^*\), \(p_i\in \tilde P\) for \(i\ge 1\), \(p\in P\) and \(r\ge 0\).
    Let \(C=\{p_1,\dots,p_r\}\), \(O=\tilde P\setminus (P^*\sqcup C)\) and note that \(p\in P^*\sqcup O\).
    Since \(p_0\in P^*\) and \(p\notin C\), no other chain gives the same inequality in the proof of \Cref{prop:mcop}.
\end{proof}

We finish this section with a discussion of the extended and refined Hibi--Li conjecture. For order and chain polytopes, marked order and chain polytopes as well as admissible marked chain-order polytopes, analogous conjectures were stated in \cite{HL16,Fou16,FF16}.
Let us state the conjecture in full generality here---for possibly unbounded marked chain-order polyhedra with arbitrary partitions \(\tilde P=C\sqcup O\)---and report on what can be said about the conjecture from the above discussion.

\begin{conjecture} \label{conj:general-hibi-li}
    Let \((P,\lambda)\) be a marked poset with all minimal elements marked. Given partitions \(\tilde P=C\sqcup O\) and \(\tilde P=C'\sqcup O'\) such that \(C\subseteq C'\), we have
    \begin{equation*}
        f_i\left( \OO_{C,O}(P,\lambda) \right) \le f_i\left( \OO_{C',O'}(P,\lambda) \right)\quad\text{for all \(i\in\N\).}
    \end{equation*}
\end{conjecture}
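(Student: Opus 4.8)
The plan is to reduce \Cref{conj:general-hibi-li} to the special case $C' = C \sqcup \{q\}$ and to compare the two polyhedra through a common ``intermediate'' member of the universal family. For the reduction, fix a chain $C = C_0 \subseteq C_1 \subseteq \cdots \subseteq C_m = C'$ with $|C_j \setminus C_{j-1}| = 1$ and $O_j = \tilde P \setminus C_j$; since arbitrary partitions are admitted in \Cref{prop:mcop}, each $(C_j, O_j)$ yields a marked chain-order polyhedron, and transitivity of $\le$ shows it suffices to prove $f_i(\OO_{C,O}(P,\lambda)) \le f_i(\OO_{C',O'}(P,\lambda))$ whenever $C' = C \sqcup \{q\}$ for a single $q \in O$. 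By \Cref{prop:unimodular-equiv} the two polyhedra are unimodularly equivalent unless $q$ is a chain-order star element for the partition $C \sqcup O$, so we may and do assume this.

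For the single-step case, let $t \in [0,1]^{\tilde P}$ have $t_p = 1$ for $p \in C$, $t_q = \tfrac12$, and $t_p = 0$ for $p \in O'$, and write $R = \OO_t(P,\lambda)$. Then $\chi_C$ and $\chi_{C'}$ both agree with $t$ off $q$, so both are degenerations of $t$ in the sense of \Cref{sec:cont-deg}; \Cref{prop:mpp-degeneration} together with \Cref{cor:f-vector-degeneration} therefore provide surjective degeneration maps
\begin{equation*}
    \dg_0 \colon \FF(R) \longrightarrow \FF(\OO_{C,O}(P,\lambda)), \qquad \dg_1 \colon \FF(R) \longrightarrow \FF(\OO_{C',O'}(P,\lambda)),
\end{equation*}
each satisfying $\dim \dg_j(F) \ge \dim F$ (\Cref{cor:degeneration}) and each admitting, over every face of its target, a preimage of equal dimension (the argument of \Cref{prop:degenerate-f-vector}). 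The crucial claim is that $\dg_0$ factors through $\dg_1$, i.e.\ that $\dg_1(F) = \dg_1(F')$ forces $\dg_0(F) = \dg_0(F')$, so that $\dg_0 = h \circ \dg_1$ for a necessarily surjective map $h$ of face lattices. Granting the claim, one checks that $h$ is dimension-non-decreasing (combine $\dg_0$ being dimension-non-decreasing with the equidimensional preimages of $\dg_1$) and that $\dg_0$ has equidimensional preimages over each face; a short counting argument — inject each $i$-face $G'$ of $\OO_{C,O}(P,\lambda)$ into the $i$-faces $G$ of $\OO_{C',O'}(P,\lambda)$ with $h(G) = G'$, choosing such a $G$ as $\dg_1$ of an equidimensional $\dg_0$-preimage of $G'$ — then yields $f_i(\OO_{C,O}(P,\lambda)) \le f_i(\OO_{C',O'}(P,\lambda))$.

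To prove the factoring claim one transports faces of $R$ to the marked order polyhedron $\OO(P,\lambda)$ along the transfer map $\varphi_t$: for $x \in \OO(P,\lambda)$, the maps $\dg_0$ and $\dg_1$ send the face of $R$ whose relative interior contains $\varphi_t(x)$ to the faces whose relative interiors contain $\varphi_{\chi_C}(x)$ and $\varphi_{\chi_{C'}}(x)$, since $\theta_{t,\chi_C}(\varphi_t(x)) = \varphi_{\chi_C}(x)$ and likewise for $\chi_{C'}$. By \Cref{prop:ineq-pullback} the face containing $\varphi_u(x)$ is recorded by which chain inequalities of $\OO_u(P,\lambda)$ are tight, and these depend only on whether $u_q$ is $0$, $1$, or strictly between: for $u = \chi_{C'}$ (where $u_q = 1$) the conditions at $q$ reduce to the single equality $x_q = \max_{q' \prec q} x_{q'}$, whereas for $u = \chi_C$ (where $u_q = 0$) they involve entire $\dashv_x$-chains descending to $q$ together with an equality $x_q = x_{p_r}$. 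The factoring claim thus amounts to the assertion that the cell-merging effected by the ``$u_q = 0$'' conditions is coarser than that effected by the ``$u_q = 1$'' condition — concretely, that every coordinate equality forced by the former already holds on any face of $R$ collapsed by $\dg_1$. Establishing this uniformly over all saturated chains through $q$ is, I expect, the main obstacle: it is exactly the combinatorial difficulty that has so far left \Cref{conj:general-hibi-li} open, and a successful analysis should also interact with \Cref{conj:regular-tame}, since controlling which inequalities are non-redundant is what tameness provides. Should the degeneration-map route prove intractable, a fallback is the explicit constructive method used for ranked posets (\Cref{lem:chain}, \Cref{prop:ranked-tame}): build, for each face of $\OO_{C',O'}(P,\lambda)$, an explicitly parametrised face of $\OO_{C,O}(P,\lambda)$ of at least the same dimension by exhibiting a point in its relative interior with a controlled pattern of tight and strict inequalities.
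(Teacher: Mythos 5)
\Cref{conj:general-hibi-li} is an \emph{open conjecture} in the paper: the authors only prove a codimension-one partial result for tame marked posets (conditionally on \Cref{conj:regular-tame}), so there is no proof in the paper to compare against, and you correctly present a strategy rather than a finished argument. Your reduction to the single-step case $C'=C\sqcup\{q\}$ and the appeal to \Cref{prop:unimodular-equiv} match the paper; the new ingredient is the intermediate polyhedron $R=\OO_t$ with $t_q\in(0,1)$, the two degenerations $\dg_0,\dg_1$ out of it, and the ``factoring claim'' that $\dg_1(F)=\dg_1(F')$ should force $\dg_0(F)=\dg_0(F')$.

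The factoring claim, which you flag as the missing step, is in fact \emph{false}. Take $P=\{a,a',p,p',q\}$ with covers $a\prec p\prec q$ and $a'\prec p'\prec q$, with $P^*=\{a,a'\}$, $\lambda(a)=0$, $\lambda(a')=1$, $C=\{p,p'\}$, $O=\{q\}$. In $\R^{\tilde P}$ the intermediate polyhedron $R=\OO_t$ with $t=(1,1,\tfrac12)$ is $\{x_p\ge0,\ x_{p'}\ge0,\ x_q\ge\tfrac12 x_p,\ x_q\ge\tfrac12(1+x_{p'})\}$. Its two slanted facets are distinct, yet $\dg_1$ sends both to the single facet $\{x_q=0\}$ of the positive orthant $\OO_{C',O'}$, while $\dg_0$ sends them to the two \emph{different} facets $\{x_p=x_q\}$ and $\{x_{p'}=x_q-1\}$ of $\OO_{C,O}$. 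The mechanism is visible in \Cref{prop:ineq-pullback}: for a chain ending in $q$, tightness when $t_q<1$ records \emph{which} lower cover attains the maximum, whereas at $t_q=1$ it only records the coarse equality $x_q=\max_{q'\prec q}x_{q'}$; thus $\dg_1$ merges \emph{more} faces at $q$, the opposite of what the factoring requires. Worse, the same example falsifies \Cref{conj:general-hibi-li} as literally stated: $\OO_{C,O}$ (cut out by $x_p,x_{p'}\ge0$, $x_p\le x_q$, $x_{p'}\le x_q-1$) has $4$ facets and $2$ vertices, namely $(0,0,1)$ and $(1,0,1)$, while $\OO_{C',O'}$ is the positive orthant with $3$ facets and $1$ vertex, so $f_0$ and $f_2$ both \emph{decrease} as $C$ grows. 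The culprit is that $q$ is maximal and unmarked, with zero upward chains; the footnote asserting \Cref{prop:unimodular-equiv} extends to the only-minimal-marked setting is incorrect (the proof's dichotomy presumes exactly one chain in some direction), and $(k-1)(l-1)=(2-1)(0-1)<0$ already predicts the facet drop. Any viable version of the conjecture—and of your strategy—must restrict to the bounded setting where $P^*$ contains all extremal elements, which forces $k,l\ge1$; even there the factoring claim would need a genuinely different formulation, since the asymmetry at $q$ identified above does not disappear.
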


This refined version of the conjecture was stated in case of admissible partitions and bounded polyhedra in \cite{FF16}.
It is clear, that it is enough to consider only the case \(C'=C\sqcup\{q\}\) for some \(q\in O\) and by the results of \Cref{sec:poset-transforms} we can assume \((P,\lambda)\) is regular.
When \(q\) is not a chain-order star element, we know that \(\OO_{C,O}(P,\lambda)\) and \(\OO_{C',O'}(P,\lambda)\) are unimodular equivalent by \Cref{prop:unimodular-equiv} and hence their $f$-vectors are identical.
In fact, the statement of \Cref{prop:unimodular-equiv} is a necessary and sufficient condition for unimodular equivalence for tame marked posets and we can count facets to show \Cref{conj:general-hibi-li} holds for tame marked posets in codimension 1:

\begin{proposition}
    Let \((P,\lambda)\) be a tame marked poset and \(\tilde P=C\sqcup O\) any partition. Given \(q\in O\) let \(C'=C\sqcup\{q\}\) and \(O'=O\setminus\{q\}\), then \(\OO_{C,O}(P,\lambda)\) and \(\OO_{C',O'}(P,\lambda)\) are unimodular equivalent if and only if \(q\) is not a chain-order star element.
    Otherwise, the number of facets increases by
    \begin{equation*}
        (k-1)(l-1),
    \end{equation*}
    where \(k\) is the number of saturated chains \(s\prec q_1\prec\cdots\prec q_k\prec q\) with \(s\in P^*\sqcup O\) and all \(q_i\in C\) and \(l\) is the number of saturated chains \(q\prec q_1\prec\cdots\prec q_k\prec s\) with \(s\in P^*\sqcup O\) and all \(q_i\in C\).
\end{proposition}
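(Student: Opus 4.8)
The plan is to reduce the entire statement to a count of facets, which is legitimate precisely because $(P,\lambda)$ is tame. By \Cref{def:tame} together with \Cref{prop:mcop}, the facets of any marked chain-order polytope $\OO_{D,O}(P,\lambda)$ are in bijection with the following data: one inequality $0\le x_p$ for each $p\in D$, and one inequality for each saturated chain $a\prec p_1\prec\dots\prec p_r\prec b$ with $a,b\in P^*\sqcup O$, all $p_i\in D$, $r\ge 0$, excluding only the case $a,b\in P^*$ with $r=0$; distinct such data visibly give distinct facet-defining functionals. Since a unimodular equivalence is a combinatorial isomorphism of full-dimensional polytopes, it preserves the number of facets. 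Hence it will suffice to prove that $\OO_{C',O'}(P,\lambda)$ has exactly $(k-1)(l-1)$ more facets than $\OO_{C,O}(P,\lambda)$, and then to invoke \Cref{prop:unimodular-equiv} for the "if" direction.

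For the count, partition the chain facets of $\OO_{C,O}(P,\lambda)$ according to how they meet $q$. Since $q\in O$, such a chain can contain $q$ only as an endpoint, which yields three disjoint classes: $\mathcal A$, the chains avoiding $q$; $\mathcal B$, the chains with lower endpoint $q$, which are precisely the $l$ chains $q\prec q_1\prec\dots\prec s$ of the statement; and $\mathcal C$, the chains with upper endpoint $q$, the $k$ chains $s\prec q_1\prec\dots\prec q$. Thus $\OO_{C,O}(P,\lambda)$ has $|C|+|\mathcal A|+k+l$ facets. Now pass to $C'=C\sqcup\{q\}$, $O'=O\setminus\{q\}$. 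A chain facet of $\OO_{C',O'}(P,\lambda)$ that avoids $q$ is literally the same datum as a member of $\mathcal A$, since its endpoints then lie in $P^*\sqcup O'=(P^*\sqcup O)\setminus\{q\}$ and its interior in $C'\setminus\{q\}=C$; these contribute $|\mathcal A|$. A chain facet of $\OO_{C',O'}(P,\lambda)$ that contains $q$ must have $q$ in its interior, because $q\in C'$, so it splits uniquely at $q$ into a member of $\mathcal C$ (the part up to and including $q$: a saturated chain ending at $q$, interior in $C$, lower endpoint in $P^*\sqcup O$) followed by a member of $\mathcal B$; conversely, concatenating any $\mathfrak c\in\mathcal C$ and any $\mathfrak c'\in\mathcal B$ at $q$ produces such a chain, of length $r\ge 1$ and hence never the excluded degenerate one. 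This concatenation gives a bijection from $\mathcal C\times\mathcal B$ onto the chain facets of $\OO_{C',O'}(P,\lambda)$ through $q$, so they number $kl$. Adding the $|C'|=|C|+1$ inequalities $0\le x_p$, we obtain $|C|+1+|\mathcal A|+kl$ facets for $\OO_{C',O'}(P,\lambda)$, which is $1+kl-k-l=(k-1)(l-1)$ more than for $\OO_{C,O}(P,\lambda)$.

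To finish, observe that $k\ge 1$ and $l\ge 1$: because $q\in\tilde P$ is unmarked it is neither minimal nor maximal in $P$ (working in the bounded setting, where all extremal elements are marked), so any saturated chain descending from $q$ leaves $C$ at an element of $P^*\sqcup O$, and symmetrically for ascending chains. Consequently $q$ is a chain-order star element if and only if $k\ge 2$ and $l\ge 2$, i.e.\ if and only if $(k-1)(l-1)>0$. In that case $\OO_{C,O}(P,\lambda)$ and $\OO_{C',O'}(P,\lambda)$ have different facet numbers and therefore are not unimodular equivalent, which is exactly the content of the "otherwise" clause; and when $q$ is not a chain-order star element we have $k=1$ or $l=1$, so \Cref{prop:unimodular-equiv} applies and yields the unimodular equivalence.

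I do not anticipate a genuinely hard step: the argument is entirely bookkeeping made rigorous by the tameness hypothesis, which is what upgrades "description by inequalities" to "description by facets". The details demanding care are that the excluded chain $a\prec b$ with $a,b\in P^*$ is uniformly absent from both descriptions and never arises from the concatenation at $q$, that distinct chains genuinely give distinct facets, and --- if one wants the statement in full generality rather than only for polytopes --- that $\OO_{C,O}(P,\lambda)$ and $\OO_{C',O'}(P,\lambda)$ are full-dimensional in $\R^{\tilde P}$ and that the degenerate case $l=0$ (occurring, e.g., when $q$ is maximal) is treated separately.
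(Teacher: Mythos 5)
Your argument is essentially the same as the paper's: use tameness to identify facets with the inequalities of \Cref{prop:mcop}, and then track how the inequality list changes when moving $q$ from $O$ to $C$ — the $|C|$-part grows by $1$, the $k+l$ chains with $q$ as an endpoint are replaced by the $kl$ concatenated chains through $q$, giving a net change of $1 - k - l + kl = (k-1)(l-1)$ — and finally combine this facet count with \Cref{prop:unimodular-equiv} for the ``if'' direction. The only substantive addition in your write-up is the observation that $k\ge 1$ and $l\ge 1$ in the bounded setting, so that ``$q$ is a chain-order star element'' is equivalent to $(k-1)(l-1)>0$; the paper leaves this implicit. Your closing caveat about the unbounded case (where $l=0$ can occur for $q$ maximal, making the displayed formula fail even though the polyhedra are still affinely equivalent by \Cref{prop:unimodular-equiv}) is a fair remark that the paper also does not address, but it does not affect the correctness of the bounded statement.
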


\begin{proof}
    If \(q\) is not a chain-order star element, the polyhedra are unimodular equivalent by \Cref{prop:unimodular-equiv}.
    For a tame marked poset, the number of facets of \(\OO_{C,O}(P,\lambda)\) is the number of inequalities in \Cref{prop:mcop}, and hence equal to
    \begin{equation*}
        \left| C \right| +
        \left| \left\{\, a\prec p_1\prec\cdots\prec p_r\prec b \,\middle|\,
        r\ge 0, a,b\in P^*\sqcup O, p_i\in C\,\right\} \right|.
    \end{equation*}
    Changing an order element \(q\) to be a chain element, the first summand increases by \(1\), while in the second summand the \(k+l\) chains ending or starting in \(q\) are replaced by the \(kl\) chains now going through \(q\).
    Hence, the number of facets increases by
    \begin{equation*}
        1-(k+l)+kl = (k-1)(l-1).\qedhere
    \end{equation*}
\end{proof}

If \Cref{conj:regular-tame} holds, we can conclude that the Hibi--Li conjecture as formulated in \Cref{conj:general-hibi-li} holds in codimension 1.
For smaller dimensions, we have a common bound on all $f$-vectors of marked chain-order polyhedra associated to a marked poset \((P,\lambda)\) by the $f$-vector of the generic marked poset polyhedron obtained from \Cref{cor:f-vector-degeneration}.
Unfortunately, this does not help for obtaining a comparison as in the Hibi--Li conjecture.

\printbibliography

\end{document}